\newtheorem{thm}{Theorem}[section]
\newtheorem{la}[thm]{Lemma}
\newtheorem{Defn}[thm]{Definition}
\newtheorem{Remark}[thm]{Remark}
\newtheorem{Conj}[thm]{Conjecture}
\newtheorem{prop}[thm]{Proposition}
\newtheorem{cor}[thm]{Corollary}
\newtheorem{Example}[thm]{Example}
\newtheorem{Number}[thm]{\!\!}
\newenvironment{defn}{\begin{Defn}\rm}{\end{Defn}}
\newenvironment{example}{\begin{Example}\rm}{\end{Example}}
\newenvironment{rem}{\begin{Remark}\rm}{\end{Remark}}
\newenvironment{numba}{\begin{Number}\rm}{\end{Number}}
\newenvironment{proof}{{\noindent\bf Proof.}}%
                  {\nopagebreak\hspace*{\fill}$\Box$\medskip\par}
\newcommand{\Punkt}{\nopagebreak\hspace*{\fill}$\Box$}
\newcommand{\wb}{\overline}
\newcommand{\ve}{\varepsilon}
\newcommand{\at}{\symbol{'100}}
\newcommand{\impl}{\Rightarrow}
\newcommand{\mto}{\mapsto}
\newcommand{\N}{{\mathbb N}}
\newcommand{\bF}{{\mathbb F}}
\newcommand{\bP}{{\mathbb P}}
\newcommand{\K}{{\mathbb K}}
\newcommand{\Q}{{\mathbb Q}}
\newcommand{\Z}{{\mathbb Z}}
\newcommand{\R}{{\mathbb R}}
\newcommand{\C}{{\mathbb C}}
\newcommand{\cg}{{\mathfrak g}}
\newcommand{\cA}{{\mathcal A}}
\newcommand{\cB}{{\mathcal B}}
\newcommand{\cC}{{\mathcal C}}
\newcommand{\cF}{{\mathcal F}}
\newcommand{\ch}{{\mathfrak h}}
\DeclareMathOperator{\Aut}{Aut}
\newcommand{\sub}{\subseteq}
\DeclareMathOperator{\id}{id}
\newcommand{\wh}{\widehat}
\DeclareMathOperator{\tor}{tor}
\DeclareMathOperator{\bil}{bil}
\DeclareMathOperator{\eq}{eq}
\DeclareMathOperator{\pr}{pr}
\DeclareMathOperator{\td}{td}
\DeclareMathOperator{\im}{im}   
\DeclareMathOperator{\op}{op}
\DeclareMathOperator{\Supp}{supp}
\begin{document}
\begin{center}
{\Large\bf Decompositions of locally compact\\[1.3mm]
contraction groups, series and extensions}\\[7mm]
{\bf Helge Gl\"{o}ckner and George A. Willis}\vspace{2mm}
\end{center}
\begin{abstract}
\hspace*{-6.3mm}A locally compact contraction group
is a pair $(G,\alpha)$, where $G$ is a locally compact group and
$\alpha\colon G\to G$ an automorphism such that $\alpha^n(x)\to e$ pointwise
as $n\to\infty$. We show that every
surjective, continuous, equivariant homomorphism
between locally compact contraction groups
admits an equivariant continuous global section.
As a consequence,
extensions of
locally compact contraction groups with abelian kernel can be
described
by continuous equivariant cohomology.
For each prime number $p$, we use 2-cocycles to construct uncountably many pairwise
non-isomorphic totally disconnected, locally compact contraction groups
$(G,\alpha)$ which are central extensions
\[
\{0\}\to \bF_p(\!(t)\!)\to G\to \bF_p(\!(t)\!)\to\{0\}
\]
of the additive group of the field of formal Laurent series over $\bF_p=\Z/p\Z$ by itself.
By contrast, there are only countably many locally compact contraction groups
(up to isomorphism) which are torsion groups and \emph{abelian},
as follows from a classification of the abelian\linebreak
locally compact contraction groups.
\end{abstract}
{\bf Classification:} Primary 22D05; % general properties and structure of lcp gps
secondary
20E22, % extensions etc
20E36, % autos
20F18, % nilpotent groups
20J06\\[3mm] % cohomology of gps
{\bf Key words:} contraction group; torsion group; extension; cocycle; section; equivariant cohomology;
abelian group; nilpotent group; isomorphism types\\[11mm]
{\bf\Large Introduction and statement of results}\\[4mm]
An automorphism $\alpha\colon G\to G$ of a locally compact group~$G$ is called
\emph{contractive} if $\lim_{n\to\infty}\alpha^n(x)= e$ for each $x\in G$.
The article is devoted to new aspects of the theory of such locally compact
contraction groups $(G,\alpha)$. Let $G_e$ be the connected component
of the identity element~$e$ of~$G$.
Siebert~\cite[Proposition~4.2]{Sie} showed that $G$ has an $\alpha$-stable
closed normal subgroup $G_{\td}$ such that
\[
G=G_e\times G_{\td}
\]
internally as a topological group;
moreover, $G_e$ is a simply connected, nilpotent
real Lie group (see \cite{Sie}) and hence torsion-free.
Results by the authors imply that
the set $\tor(G)$ of torsion elements
is a
closed subgroup of $G_{\td}$ and
\begin{equation}\label{fulldec}
G=G_e\times G_{p_1}\times\cdots\times G_{p_n}\times\tor(G)
\end{equation}
internally for
$\alpha$-stable closed subgroups $G_p$ of $G$ which are $p$-adic Lie groups
for some prime~$p$ (cf.\ \cite[Theorem~B]{GW}).
We improve these results by adding uniqueness statements, invariance properties
and refined decompositions:\footnote{In the following theorems, we use terminology as explained in Section~\ref{secprel}.}\\[4mm]
{\bf Theorem~A.}
\emph{For each locally compact contraction group $(G,\alpha)$, we have}:
\begin{itemize}
\item[(a)]
\emph{$G$ has a unique closed normal subgroup $G_{\td}$ such that
$G=G_e\times G_{\td}$ internally as a topological group.
The subgroup
$G_{\td}$ is
totally
dis\-connected, $\alpha$-stable,
and topologically fully invariant in~$G$.}
\item[(b)] 
\emph{The set $\tor(G)$ of torsion elements of~$G$
is a closed subgroup of~$G$ and totally disconnected.
There are a unique
$n\in\N_0$, unique prime numbers $p_1\!<\!\cdots\!<\!p_n$
and unique $p$-adic Lie groups $G_p\!\not=\!\{e\}$ for $p\!\in\!\{p_1,\ldots,p_n\}$
which are
closed
normal subgroups of~$G$ such that}
\begin{equation}\label{decoGpre}
G=G_e\times G_{p_1}\times\cdots\times G_{p_n}\times\tor(G)
\end{equation}
\emph{internally as a topological group. Each $G_p$ is topologically fully invariant
in~$G$} (\emph{and hence $\alpha$-stable}).
\item[(c)]
\emph{If $\tor(G)$ is locally pro-nilpotent, then, for each prime number~$p$,
the set $\tor_p(G)$ of $p$-torsion
elements of~$G$
is a fully invariant closed subgroup of~$G$ which is
locally
pro-$p$.
Moreover, $\tor_p(G)\not=\{e\}$ for
only finitely many~$p$, say for $p$ among the prime numbers
$q_1<q_2<\cdots < q_m$,
and}
\[
\tor(G)=\tor_{q_1}(G)\times\cdots\times \tor_{q_m}(G)
\]
\emph{internally as a topological group.}\vspace{2mm}
\end{itemize}
The structure of $G_e$ and the $G_p$ is well understood:
Siebert showed that the Lie algebra of the simply connected nilpotent
real Lie group $G_e$ admits a positive graduation
(and any real Lie algebra with this property arises in this way), see \cite{Sie}.
J.\,S.\,P. Wang showed that $G_p$ is the group of $\Q_p$-rational points of a unipotent algebraic group
defined over the field $\Q_p$ of $p$-adic numbers (and hence nilpotent) \cite{Wan}; its Lie algebra admits an $\N$-graduation
(and any $p$-adic Lie algebra with this property arises in this way; see also~\cite{Glo}).
By contrast, the structure of the torsion part $\tor(G)$ is as yet not well understood,
except that, as a locally compact contraction group,  $\tor(G)$ has a composition series
whose simple quotients are isomorphic to restricted products
\[
F^{(-\N)}\times F^{\N_0}
\]
with the right shift for $F$ a finite simple group, where $F^{\N_0}$
is equipped with the compact product topology and where the group $F^{(-\N)}$
of finitely supported sequences $(x_n)_{n\in-\N}$ in $F$ (viz.\ $x_n=e$
for all sufficiently small $n$) is given the discrete topology (see \cite{GW}).
Note that $F^{(-\N)}\times F^{\N_0}$ is isomorphic to the additive group
of the field $\bF_p(\!(t)\!)$ 
of formal Laurent series with coefficients in~$\bF_p$
(together with the contractive automorphism $\alpha\colon f(t)\mto tf(t)$)
if $F:=\bF_p$ is a cyclic group of prime order~$p$.
As there are (up to isomorphism) only countably many
possible composition factors, it is natural to ask
whether, up to isomorphism, there are only countably many torsion
contraction groups. We show that the answer is negative (see Theorem~\ref{mainres}):\\[4mm]
{\bf Theorem B.}
\emph{For each prime $p$, there is an uncountable set of contraction groups $(H,\beta)$ which are pairwise
non-isomorphic as contraction groups and such that each $(H,\beta)$ is a central extension}
\[
\{0\}\to \bF_p(\!(t)\!)\to H\to \bF_p(\!(t)\!)\to\{0\}.\vspace{2mm}
\]
To this end, we describe continuous $2$-cocycles $\eta_s\colon \bF_p(\!(t)\!)\times\bF_p(\!(t)\!)\to
\bF_p(\!(t)\!)$ with $\alpha\circ \eta_s=\eta_s \circ (\alpha\times\alpha)$,
parametrized by sequences $s\in\{0,1\}^\N$, which give rise to contraction groups
\[
\bF_p(\!(t)\!)\times_{\eta_s}\bF_p(\!(t)\!)
\]
which are pairwise non-isomorphic for different values of~$s$.
This procedure is quite natural, since all central extensions of locally
compact contraction groups can be
described by equivariant, continuous $2$-cocycles (as observed in Appendix~\ref{cohom}),
as a consequence
of the following result:\pagebreak

\noindent
{\bf Theorem C.} \emph{If $(G,\alpha)$ and $(H,\beta)$ are locally compact
contraction groups and $q\colon G\to H$ is a surjective, continuous homomorphism such that
$\beta\circ q=q\circ\alpha$, then there exists a continuous map
$\sigma\colon H\to G$ such that $\sigma(e)=e$, $q\circ \sigma=\id_H$ and $\alpha\circ\sigma=\sigma\circ\beta$.}\\[4mm]
In other words, all extensions of locally compact
contraction groups admit
equivariant, continuous global sections.\\[2.3mm]
Theorem~B provides uncountably many non-isomorphic contraction
groups which are central extensions of the abelian group $\bF_p(\!(t)\!)$
by itself, and hence nilpotent. By contrast, there are only two non-isomorphic
\emph{abelian} contraction groups which are extensions of $\bF_p(\!(t)\!)$
by itself. More generally, we have the following classification
(which subsumes the preceding assertion with $F=\Z/4\Z$ and the Klein 4-group
$F=(\Z/2\Z)^2$):\\[4mm]
{\bf Theorem D.} \emph{Let $(G,\alpha)$ be a totally disconnected,
locally compact contraction group such that $G$ is a torsion group and abelian.
Then $(G,\alpha)$ is isomorphic to $F^{(-\N)}\times F^{\N_0}$ with the right shift
for some finite abelian group~$F$, which is determined up to isomorphism.
Conversely, every such contraction group
is an abelian torsion group.}\\[4mm]
Notably, up to isomorphism there are only countably many locally compact abelian
contraction groups which are torsion groups (see also Corollary~\ref{thetorso}).\\[2.3mm]
Based on Theorem~D, we obtain a classification of the locally compact abelian
contraction groups. To formulate it, let $\bP$ be the set of prime numbers
and $\Q_\infty:=\R$. For $p\in\bP\cup\{\infty\}$,
let $\wb{\Q_p}$ be an algebraic closure of~$\Q_p$
and $\Omega_p$ be the set of all monic irreducible polynomials
$f$ over~$\Q_p$ such that $|\lambda|<1$ for all zeros $\lambda$ of~$f$
in~$\wb{\Q_p}$, where $|.|$ is the unique extension of the usual absolute
value on~$\Q_p$ to an absolute value on~$\wb{\Q_p}$.
Given a monic irreducible polynomial $f\in\Q_p[X]$ and $n\in\N$, we endow the $\Q_p$-vector space
\[
E_{f^n}:=\Q_p[X]/f^n\Q_p[X]
\]
with the $\Q_p$-linear automorphism~$\alpha_{f^n}$
taking $g+f^n\Q_p[X]$ to $Xg+f^n\Q_p[X]$ for all polynomials $g\in\Q_p[X]$
(whose characteristic and minimal polynomials
both are~$f^n$).
Given $p\in \bP$ and $n\in\N$, let $F_{p^n}:=\Z/p^n\Z$
and write $F_{p^n}(\!(t)\!)$
as a shorthand for $F_{p^n}^{(-\N)}\times F_{p^n}^{\N_0}$, endowed
with the right shift $\alpha_{p^n}$ (see also \ref{morecompact}).\\[3mm]
We obtain the following Structure Theorem for Locally Compact
Abelian Contraction Groups.\pagebreak

\noindent
{\bf Theorem E.}
\emph{Let $(G,+)$ be a locally compact abelian group and $\alpha\colon G\to G$
be a contractive automorphism. Then $(G,\alpha)$ is isomorphic to}
\[
\qquad \qquad \bigoplus_{p\in\bP\cup\{\infty\}} \bigoplus_{f\in \Omega_p}\;\bigoplus_{n\in\N}\,
(E_{f^n},\alpha_{f^n})^{\mu(p,f,n)}\, \oplus\;\,
\bigoplus_{p\in\bP} \bigoplus_{n\in\N} \, (F_{p^n}(\!(t)\!),\alpha_{p^n})^{\nu(p,n)}\quad
(*)
\]
\emph{as a contraction group, for uniquely determined $\mu(p,f,n)\in \N_0$
which are non-zero for only finitely many
$(p,f,n)$ with $(p,n)\in (\bP\cup\{\infty\})\times\N$ and $f\in\Omega_p$,
and uniquely determined $\nu(p,n)\in\N_0$
which are non-zero for only finitely many $(p,n)\in\bP\times\N$.
Conversely, all groups of the form $(*)$ are locally compact abelian contraction
groups.}\\[4mm]
We are also able to adapt various results to
general locally compact contraction groups $(G,\alpha)$ which were
previously only known for totally disconnected~$G$.
For example, generalizing \cite[Theorem~3.3]{GW}
we show that every locally compact contraction group admits a composition
series
\[
\{e\}=G_0\lhd G_1\lhd\cdots\lhd G_n=G
\]
as a contraction group, and that a Jordan-H\"{o}lder Theorem holds for such
(see Theorem~\ref{JH}).
As the composition factors $G_j/G_{j-1}$ are simple
contraction groups (in the sense of~\ref{simpdef}), it is of interest to classify the latter.
A classification of the simple totally disconnected contraction groups
was obtained in \cite[Theorem~A]{GW}. The following result completes the picture:\\[4mm]
{\bf Theorem~F.}
\emph{If $(G,\alpha)$ is a simple locally compact contraction group,
then~$G$ is either connected or totally disconnected.
The family $(E_f,\alpha_f)_{f\in\Omega_\infty}$
is a\linebreak
system of representatives for the isomorphism classes of the connected\linebreak
simple locally compact contraction groups.}\\[4mm]
It is known from \cite[Corollary~3.6]{GW} that a continuous group
homomorphism $\phi\colon G\to H$ has closed image and is an open mapping
onto its image\linebreak
whenever $(G,\alpha)$ and $(H,\beta)$
are totally disconnected, locally compact contraction groups
and $\beta\circ\phi=\phi\circ\alpha$.
We obtain the following generalization (see Proposition~\ref{nicehomtf}
and Corollary~\ref{precloprod}):\\[4mm]
{\bf Theorem~G.}
\emph{Let $(G,\alpha)$ and $(H,\beta)$ be locally compact
contraction groups and $\phi\colon G\to H$ be a continuous group homomorphism.
Then we have}:
\begin{itemize}
\item[(a)]
\emph{$\phi(G)$ is closed in~$H$ if and only if $\phi(\tor(G))$
is closed in~$H$, in which case the co-restriction $\phi|^{\phi(G)}\colon G\to\phi(G)$
is an open map.}
\item[(b)]
\emph{If $\beta\circ\phi=\phi\circ\alpha$, then $\phi(G)$ is closed in~$H$.}\vspace{1mm}
\end{itemize}
{\bf Structure of the article.} In Section~\ref{secprel},
we fix notation and describe basic facts and conventions for later use.
Section~\ref{secdecomp} is devoted to uniqueness
properties and
Theorem~A.
Section~\ref{secsec} contains the proof of Theorem~C.
As\linebreak
already mentioned,
the latter theorem allows central extensions of locally\linebreak
compact
contraction groups to be described in terms
of suitably-defined equivariant continuous $2$-cocycles (and likewise for extensions
with abelian kernel). The arguments,
which closely parallel the classical case of group extensions,
are sketched in Appendix~\ref{cohom}.
Theorems~F and~G are established in Section~\ref{newsec}.
In Section~\ref{secab}, we prove Theorems~D and~E.
Section~\ref{secauxi} provides two auxiliary results concerning
extensions of contraction groups.
In Section~\ref{secbiadd}, we turn to a special class of 2-cocycles
on contraction groups leading to central extensions,
namely equivariant, \emph{biadditive}, continuous mappings $\omega\colon \bF_p(\!(t)\!)\times
\bF_p(\!(t)\!)\to\bF_p(\!(t)\!)$.
We obtain an explicit parametrization of these in terms of two-sided
sequences of elements in~$\bF_p(\!(t)\!)$.
In Section~\ref{secex}, we show that a suitable uncountable set of parameters
gives rise to extensions which are not only inequivalent extensions,
but pairwise non-isomorphic as contraction groups (thus proving Theorem~B).
The article closes with open problems concerning torsion contraction groups
which are locally pro-$p$ (Section~\ref{secnilp}).\\[2.3mm]
{\bf Acknowledgements.}
The first author is grateful to George A. Willis and the University of Newcastle, N.S.W.
for support which enabled research\linebreak
visits in October 2015 and September~2017.
Partial support also came from the Deutsche Forschungsgemeinschaft, DFG grant GL 357/10-1.
\section{Preliminaries and notation}\label{secprel}
We write $\Z$ for the
ring of
integers, $\N:=\{1,2,\ldots\}$ and $\N_0:=\N\cup\{0\}$.
If $f \colon X\to X$ is a self-map of a set~$X$,
then a subset $M\sub X$ is called \emph{$\alpha$-stable}
if $f(M)=M$. If $M$ is a subset of a group~$G$
and $\alpha(M)\sub M$ for each endomorphism~$\alpha$ of~$G$,
then~$M$ is called \emph{fully invariant}.
If $G$ is a topological group and $\alpha(M)\sub M$
for each continuous endomorphism~$\alpha$ of~$G$,
then~$M$ is called \emph{topologically fully invariant}.
As usual, we write $\langle M\rangle$ for the subgroup
generated by a subset~$M$ of a group~$G$.
If $A$ and $B$ are subgroups of a group~$G$,
we write $[A,B]$ for the subgroup of~$G$ generated by
$\{aba^{-1}b^{-1}\colon a\in A,b\in B\}$,
as usual.
We define $C^1(G):=G$ and $C^{n+1}(G):=[G,C^n(G)]$ for $n\in\N$.
We define $G^{(0)}:=G$ and $G^{(n+1)}:=[G^{(n)},G^{(n)}]$
for $n\in\N_0$.
\begin{numba}
If $G$ is a group and $p$ a prime number, we write
\[
\tor_p(G):=\{g\in G\colon (\exists n\in\N)\; g^{p^n}=e\}
\]
for the subset of $p$-torsion elements. If $\tor_p(G)$ is a subgroup,
we call it the \emph{$p$-torsion subgroup} of~$G$.
\end{numba}
\begin{numba}\label{fuinva}
If $G$ and $H$ are groups and $\phi\colon G\to H$ is a group
homomorphism, then
\[
\phi(\tor_p(G))\sub\tor_p(H)
\]
for each prime number~$p$.
In particular, $\tor_p(G)$ is fully invariant in~$G$.
\end{numba}
\begin{numba}\label{morecompact}
If $p$ is a prime number, we let $\bF_p$ be the finite field of order~$p$
and $\bF_p(\!(t)\!)$ be the valued field of formal Laurent series with coefficients in~$\bF_p$
(cf.\ \cite{Sch},~\cite{Wei});
its elements are series of the form $x=\sum_{n=N}^\infty x_nt^n$
with $N\in\Z$ and $x_n\in \bF_p$ for integers $n\geq N$.
We shall use the absolute value $|.|$ on $\bF_p(\!(t)\!)$ given by
\[
|x|:=p^{-N}
\]
if $x\not=0$ and $N$ (as above) is chosen minimal with $x_N\not=0$.
Then
\begin{equation}\label{ultra}
|x+y|=|x|\quad\mbox{for all $x,y\in\bF_p(\!(t)\!)$ such that $|y|<|x|$.}
\end{equation}
In the same way, we can consider the ring $F(\!(t)\!)$
of formal Laurent series over a finite commutative ring~$F$.
Its additive group with the contractive automorphism $x\mto tx$
is isomorphic to $F^{(-\N)}\times F^{\N_0}$ with the right shift,
and we appreciate the more compact notation.
\end{numba}
All topological groups we consider are assumed Hausdorff.
As usual, we use \emph{locally compact group}
as a shorthand for locally compact topological group.
\begin{numba}
If we speak of an \emph{automorphism}~$\alpha$
of a locally compact group~$A$, then $\alpha$ and its inverse are assumed
continuous. We write $\Aut(A)$ for the group of all such automorphisms,
and endow it with the topology induced from the Braconnier
topology on the set $\text{Homeo}(A)$ of auto-homeomorphisms
of~$A$\linebreak
(called the `refined compact-open topology' in~\cite{Str}).
A subbasis for this\linebreak
topology is given by the sets
\[
\{\alpha\in\Aut(A)\colon \alpha(K_1)\sub U_1\;\mbox{and}\; \alpha^{-1}(K_2) \sub U_2\},
\]
for $K_1, K_2$ in the set of compact subsets of~$A$ and open subsets $U_1,U_2\sub A$.
\end{numba}
\begin{numba}\label{goodStr}
If~$G$ is a topological group, then a homomorphism $\gamma \colon G\to\Aut(A)$
is continuous if and only if the corresponding left $G$-action
\[
G\times A\to A,\quad (g,a)\mto\gamma(g)(a)
\]
is continuous (see, e.g., \cite[Lemma~10.4]{Str}).
\end{numba}
\begin{numba}\label{simpdef}
An automorphism $\alpha$ of a topological group~$G$
is called \emph{contractive} if
\[
\lim_{n\to\infty}\alpha^n(g)=e\quad\mbox{for all $g\in G$.}\vspace{-1mm}
\]
A \emph{locally compact contraction group}
is a pair $(G,\alpha)$ consisting of a
locally compact group~$G$ and a contractive automorphism $\alpha\colon G\to G$.
If $G$ is, moreover, totally disconnected (resp., connected),
then $(G,\alpha)$ is called a totally disconnected (resp., connected)
locally compact contraction group.
We call $(G,\alpha)$ a \emph{simple} contraction group
if $G\not=\{e\}$ and $G$ does not have $\alpha$-stable, closed normal
subgroups except for $\{e\}$ and~$G$.
\end{numba}
\begin{numba}\label{compacontra}
Every contractive automorphism $\alpha$
of a locally compact group~$G$
is \emph{compactly contractive} in the following sense:
For each compact set $K\sub G$ and identity neighbourhood $U\sub G$,
there exists $N\in\N$ such that
\[
\alpha^n(K)\sub U\quad\mbox{for all integers $n\geq N$}
\]
(see
\cite[Proposition 2.1]{Wan} or \cite[Lemma~1.4\,(iv)]{Sie}).
Thus $\{\alpha^n(V)\colon n\in\N_0\}$ is a basis of identity neighbourhoods
in~$G$, for each compact identity neighbourhood $V\sub G$.
\end{numba}
\begin{numba}\label{inv-sub}
If $(G,\alpha)$ is a totally disconnected, locally compact contraction group,
then there exists a compact open subgroup
$V\sub G$ such that $\alpha(V)\sub V$ (see
\cite[Lemma~3.2\,(i)]{Sie}).
\end{numba}
\begin{numba}
If $G$ is a locally compact group with Haar measure~$\lambda_G$
and $\alpha\colon G\to G$\linebreak
an automorphism, there is
$\Delta(\alpha)\in\;]0,\infty[$ (the \emph{module} of~$\alpha$) such that
$\lambda_G(\alpha(A))=\Delta(\alpha)\lambda_G(A)$
for all Borel sets $A\sub G$ (cf.\ \cite{HaR}).
If $G$ is totally disconnected and $\alpha$ is contractive,
let $V\sub G$ be a compact open subgroup such that $\alpha(V)\sub V$ (see \ref{inv-sub}).
Then
\begin{equation}\label{formul-mod}
\Delta(\alpha^{-1})=[V:\alpha(V)].
\end{equation}
\end{numba}
\begin{numba}
Let $(G,\alpha_G)$ and $(H,\alpha_H)$ be locally compact
contraction groups. A continuous group homomorphism $\phi \colon G\to H$
is called a \emph{morphism of contraction groups}
if it is \emph{equivariant} in the sense that
\[
\alpha_H\circ\phi=\phi\circ\alpha_G;
\]
we then write $\phi\colon (G,\alpha_G)\to (H,\alpha_H)$.
If, moreover, $\phi$ is invertible and also $\phi^{-1}$
is a morphism of contraction groups,
then $\phi$ is an \emph{isomorphism of contraction groups}
(and an \emph{automorphism} of the contraction group $(G,\alpha_G)$
if, moreover, $(G,\alpha_G)=(H,\alpha_H)$).
We write $\Aut(G,\alpha_G)$ for the group of automorphisms of $(G,\alpha_G)$.
\end{numba}
\begin{numba}\label{nicemorph}
If
$\phi\colon (G,\alpha_G)\to (H,\alpha_H)$ is a morphism between totally disconnected,
locally compact contraction groups,
then $\phi(G)$ is a closed, $\alpha_H$-stable subgroup of~$H$
and the co-restriction $\phi|^{\phi(G)}\colon G\to\phi(G)$
is a quotient homomorphism (i.e., surjective, continuous, and open),
see \cite[Corollary~3.6]{GW}.
\end{numba}
\begin{numba}
If $(A,\alpha_A)$, $(\widehat{G},\widehat{\alpha})$, and $(G,\alpha)$
are totally disconnected, locally compact contraction groups,
we call a short exact sequence
\[
\{e\}\to A\stackrel{\iota}{\to} \widehat{G}\stackrel{q}{\to} G\to\{e\}
\]
(or simply $\widehat{G}$) an \emph{extension of contraction groups}
if $\iota\colon (A,\alpha_A)\to (\widehat{G},\widehat{\alpha})$
and $q\colon (\widehat{G},\widehat{\alpha})\to (G,\alpha)$
are morphisms of contraction groups.
Then $\iota$ is a homeomorphism onto its image
and~$q$ is a quotient homomorphism,
by the fact recalled in~\ref{nicemorph}.
If~$A$ is abelian, then $\wh{G}$ is called an extension of~$G$ \emph{with abelian
kernel};
if $\iota(A)$ is contained in the centre $Z(\widehat{G})$ of the group
$\widehat{G}$, then $\widehat{G}$ is called a \emph{central extension} of~$G$.
\end{numba}
\begin{numba}
If $(G,\alpha)$ is a locally compact contraction group, we call a series
\begin{equation}\label{serial}
\{e\}\lhd G_0\lhd G_1\lhd\cdots\lhd G_n=G
\end{equation}
a \emph{series of contraction groups}\footnote{In the terminology of~\cite[Definition~2.1]{GW},
this is an $\langle\alpha\rangle$-series.}
if $G_j$ is a closed $\alpha$-stable subgroup of~$G$
for each $j\in\{0,\ldots, n\}$. We endow $G_j$ with the contractive automorphism
$\alpha|_{G_j}$ for $j\in\{0,\ldots, n\}$. Moreover, we endow
$G_j/G_{j-1}$ with the contractive automorphism $\wb{\alpha}_j\colon xG_{j-1}\mto\alpha(x)G_{j-1}$
for $j\in\{1,\ldots, n\}$. If $(G_j/G_{j-1},\wb{\alpha}_j)$ is a simple
contraction group for all $j\in\{1,\ldots, n\}$, then (\ref{serial})
is called a \emph{composition series} of contraction groups.
\end{numba} 
\begin{numba}
Following \cite[Definition~1.7]{Sie},
we say that a locally compact group~$G$ is \emph{contractible}
if it admits a contractive automorphism $\alpha\colon G\to G$.
\end{numba}
Repeatedly, we shall construct continuous functions as follows.
\begin{la}\label{egsum}
Let $X$ be a topological space, $G$ be a totally disconnected,
locally compact group
which is metrizable and $(f_n)_{n\in\N_0}$ be a sequence of continuous functions
$f_n\colon X\to G$. Assume that there exists a basis of identity neighbourhoods
$U_1\supseteq U_2\supseteq\cdots$ in~$G$ such that $U_n$ is a compact open
subgroup of~$G$ and $f_n(X)\sub U_n$, for each $n\in\N$.
Then
\[
g_n:=f_0f_1\ldots f_n\colon X\to G,\quad x\mto f_0(x)f_1(x)\cdots f_n(x)
\]
converges uniformly to a continuous function $g\colon X\to G$.
\end{la}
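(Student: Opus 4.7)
The proof will exploit the fact that the $U_n$ are \emph{subgroups}, so that any finite product of elements taken from $U_{n+1}$ stays in $U_{n+1}$; this reduces the usual $\varepsilon/3$-style arguments to a pure algebraic manipulation.

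The first step will be to verify a uniform Cauchy property. For $m>n\geq 0$ and every $x\in X$,
\[
g_n(x)^{-1}g_m(x)=f_{n+1}(x)f_{n+2}(x)\cdots f_m(x),
\]
and each factor $f_k(x)$ lies in $U_k\sub U_{n+1}$ for $k\geq n+1$. Since $U_{n+1}$ is a subgroup of $G$, the whole product lies in $U_{n+1}$. Thus $g_n(x)^{-1}g_m(x)\in U_{n+1}$ for all $x\in X$ and all $m>n$.

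The second step will be pointwise convergence. Fix $x\in X$. For each $N$, the tail $(g_m(x))_{m\geq N}$ is contained in the compact set $g_N(x)U_{N+1}$; since $(g_n(x))$ is Cauchy in the right uniformity on the locally compact (hence complete) group~$G$ and eventually lies in a compact set, it converges to some $g(x)\in G$. Passing to the limit $m\to\infty$ in the relation from step one and using that $U_{n+1}$ is closed, I obtain
\[
g_n(x)^{-1}g(x)\in U_{n+1}\qquad\text{for all }n\in\N_0,\ x\in X.
\]
Because $(U_n)_{n\in\N}$ is a basis of identity neighbourhoods in~$G$, this immediately yields uniform convergence $g_n\to g$.

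The third step will be the continuity of~$g$. Fix $x_0\in X$ and $n\in\N$. Since $g_n=f_0f_1\cdots f_n$ is a finite product of continuous functions, there is a neighbourhood $W$ of~$x_0$ in~$X$ such that
\[
g_n(x_0)^{-1}g_n(x)\in U_{n+1}\qquad\text{for all }x\in W.
\]
Writing
\[
g(x_0)^{-1}g(x)=\bigl(g(x_0)^{-1}g_n(x_0)\bigr)\bigl(g_n(x_0)^{-1}g_n(x)\bigr)\bigl(g_n(x)^{-1}g(x)\bigr),
\]
all three factors lie in $U_{n+1}$ by step two and the choice of~$W$. Because $U_{n+1}$ is a subgroup, the product lies in $U_{n+1}$, and so $g(W)\sub g(x_0)U_{n+1}$. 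Since the cosets $g(x_0)U_{n+1}$ for $n\in\N$ form a basis of neighbourhoods of $g(x_0)$, this proves continuity of~$g$ at $x_0$.

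There is no real obstacle; the only point to handle carefully is ensuring that the limit $g(x)$ exists at each $x$, which is where local compactness (via the containment in a compact coset) is used. Note that the hypothesis that $G$ is metrizable is not actually needed for the argument, but it does guarantee that sequential notions of convergence and Cauchyness capture the full uniform structure.
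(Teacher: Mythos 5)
Your proof is correct and follows essentially the same route as the paper: the telescoping estimate $g_n(x)^{-1}g_m(x)\in U_{n+1}$, convergence via completeness/local compactness, passing to the limit to obtain the uniform bound, and then continuity. The only difference is that you spell out explicitly the (standard) argument that a uniform limit of continuous maps is continuous, and you correctly note in passing that metrizability is redundant since the countable basis $(U_n)$ already forces it.
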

\begin{proof}
For all $n\in\N_0$ and $m\in \N_0$, we have
\begin{equation}\label{telesc}
(g_{n+m}(x))^{-1}g_n(x)=(f_{n+m}(x))^{-1}\cdots (f_{n+1}(x))^{-1}\in U_{n+1}\;
\mbox{for all $x\in X$.}
\end{equation}
For fixed $x\in X$, we deduce that $(g_n(x))_{n\in\N}$ is a Cauchy sequence in~$G$
and hence convergent (as locally compact groups are complete).
Let
$g(x)$ be the limit.
Letting $m\to\infty$ in (\ref{telesc}), we see that
\[
(g(x))^{-1}g_n(x)\in U_{n+1}\;
\mbox{for all $n\in\N_0$ and $x\in X$.}
\]
Thus $g_n\to g$ uniformly, whence $g$ is continuous.
\end{proof}
\begin{numba}
Let $\K=\R$ or $\K=\Q_p$, respectively, for some prime~$p$.
Recall that a real (resp.\ $p$-adic) Lie group
is a group~$G$, endowed with a finite-dimensional $\K$-analytic 
manifold structure which makes the group operations analytic.
Then the tangent space $L(G):=T_e(G)$ of~$G$
at the neutral element is a Lie algebra over~$\K$
in a natural way. If $\phi\colon G\to H$ is a continuous
group homomorphism, then $\phi$ is analytic
and we write $L(\phi):=T_e(\phi)$ for its tangent map at~$e$,
which is a Lie algebra homomorphism from $L(G)$ to $L(H)$.
In particular, there is at most one real (resp.,
$p$-adic) Lie group structure on a given locally compact group~$G$.
As usual, we identify a real (resp.\ $p$-adic) Lie group
with its underlying locally compact group
(see~\cite{Bou} and~\cite{Ser} for further information).
\end{numba}
For generalities concerning pro-finite groups and pro-$p$-groups,
the reader is referred to~\cite{Wil} and~\cite{Dix}.
\begin{defn}
Let $p$ be a prime.
We say that a locally compact group~$G$ is \emph{locally pro-$p$}
if it has a compact open subgroup $U$ which is a pro-$p$-group.
\end{defn}
For example, every $p$-adic Lie group is locally pro-$p$
(cf.\ \cite[Theorems~8.29 and 8.31]{Dix}).
\begin{defn}
We say that a locally compact group~$G$ is \emph{locally pro-nilpotent}
if it has a compact open subgroup~$U$ which is pro-nilpotent
(i.e., a projective limit of finite nilpotent groups).
\end{defn}
For example, every totally disconnected, \emph{abelian} locally compact contraction group
$(G,\alpha)$ is locally pro-nilpotent, as every compact open subgroup
$U\sub G$ is abelian and hence pro-nilpotent.
\section{Decompositions of contraction groups}\label{secdecomp}
In this section, we prove Theorem~A
and provide additional results concerning decompositions
of locally compact contraction groups.\\[2.3mm]
We first pinpoint properties of group elements in special classes of contraction
groups (like totally disconnected groups, connected groups, $p$-adic Lie groups, and torsion groups).
These properties will help us to distinguish elements in the individual factors
of decompositions of contraction groups, and hence to
see that the factors are unique.\\[5mm]
{\bf Divisible elements; topologically periodic elements}
\begin{numba}\label{diviper}
Recall that an element $g$ in a group~$G$
is called \emph{divisible} if, for each
$n\in\N$, there exists $x\in G$ such that $x^n=g$.
If~$x$ is uniquely determined for each~$n$,
then $g$ is called \emph{uniquely divisible}.
An element $g$ of a locally compact group~$G$
is called
\emph{topologically periodic}
if $g$ generates a relatively compact subgroup.
If $\phi\colon G\to H$
is a continuous homomorphism between locally compact groups
and $g\in G$ is topologically periodic,
then $\phi(g)$ is topologically periodic in~$H$.
\end{numba}
\begin{la}\label{tdtopper}
If $(G,\alpha)$ is a totally disconnected,
locally compact contraction group, then $G$ is a union
of compact open subgroups. In particular,
each $g\in G$
is topologically periodic.
\end{la}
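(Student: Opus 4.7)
The plan is to exploit the existence of a compact open subgroup $V\subseteq G$ with $\alpha(V)\subseteq V$ (given by \ref{inv-sub}), together with the pointwise convergence $\alpha^n(g)\to e$ that defines a contractive automorphism. The key observation is that, since $\alpha$ is an automorphism of topological groups, the preimages $U_n:=\alpha^{-n}(V)$ are all compact open subgroups of~$G$, and the inclusion $\alpha(V)\subseteq V$ translates (by applying $\alpha^{-(n+1)}$) into the ascending chain
\[
V=U_0\subseteq U_1\subseteq U_2\subseteq\cdots .
\]

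To finish, I would fix $g\in G$ arbitrary. By contractivity, $\alpha^n(g)\to e$, so there is some $n_0\in\N_0$ with $\alpha^{n_0}(g)\in V$, i.e.\ $g\in\alpha^{-n_0}(V)=U_{n_0}$. Thus $G=\bigcup_{n\in\N_0}U_n$ is a union of compact open subgroups, which proves the first claim. For the second, note that the cyclic subgroup $\langle g\rangle$ is contained in the compact set $U_{n_0}$ (since $U_{n_0}$ is a subgroup containing $g$), hence $\overline{\langle g\rangle}\subseteq U_{n_0}$ is compact and $g$ is topologically periodic in the sense of \ref{diviper}.

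There is essentially no obstacle here: the argument is a direct two-line consequence of \ref{inv-sub} combined with the defining pointwise-convergence property of $\alpha$. The only subtlety worth mentioning is that one must apply $\alpha^{-n}$ rather than $\alpha^n$ to~$V$ to obtain an ascending chain (the forward images shrink down to~$\{e\}$ by \ref{compacontra} and would not cover~$G$).
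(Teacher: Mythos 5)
Your argument is correct and is essentially the paper's proof: both write $G=\bigcup_{n\in\N_0}\alpha^{-n}(V)$ for a compact open subgroup $V$ and deduce both claims from the fact that each $\alpha^{-n}(V)$ is a compact open subgroup containing any given $g$ for $n$ large. The appeal to \ref{inv-sub} to make the chain ascending is harmless but unnecessary — any compact open subgroup $V$ works, since contractivity alone puts every $g$ into some $\alpha^{-n}(V)$.
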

\begin{proof}
Let $V$ be a compact open subgroup of~$G$.
Then $G=\bigcup_{n\in\N_0}\alpha^{-n}(V)$,
from which the assertions follow.\vspace{4mm}
\end{proof}
{\bf Basic properties of connected contraction groups}\\[2.3mm]
Results by Siebert~\cite{Sie} imply:
\begin{la}\label{conncase}
If $(G,\alpha)$ is a connected locally compact contraction group,
then $G$ is a simply connected, nilpotent real Lie group,
$L(G)$ is a nilpotent real Lie algebra,
$L(\alpha)\colon L(G)\to L(G)$ is contractive and we have:
\begin{itemize}
\item[\rm(a)]
$(G,\alpha)$ is isomorphic to $((L(G),*),L(\alpha))$
as a contraction group, where $*\colon L(G)\times L(G)\to L(G)$,
$(x,y)\mto x*y=x+y+\frac{1}{2}[x,y]+\cdots$
is the Baker-Campbell-Hausdorff {\rm(BCH)-} multiplication.
\item[\rm(b)]
$G$ is torsion-free and each $g\in G$ is uniquely divisible.
\item[\rm(c)]
If $g\in G$ is topologically periodic, then $g=e$.
\item[\rm(d)]
For each $e\not=g\in G$ and each sequence $(k_n)_{n\in\N}$ in $\N$ such that $k_n\to\infty$,
the sequence $(g^{k_n})_{n\in\N}$ does not converge in~$G$.
\end{itemize}
\end{la}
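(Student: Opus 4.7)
The plan is to invoke Siebert's results to obtain the Lie-theoretic structure of~$G$ and then exploit the Baker--Campbell--Hausdorff (BCH) identification to reduce (b)--(d) to elementary statements about finite-dimensional real vector spaces. By the results from \cite{Sie} already cited in the introduction, a connected locally compact contraction group $(G,\alpha)$ is a simply connected, nilpotent real Lie group, $L(G)$ is a nilpotent real Lie algebra, and $L(\alpha)\colon L(G)\to L(G)$ is a contractive Lie algebra automorphism (its eigenvalues on the complexification all have absolute value less than one, which forces nilpotency of $L(G)$ as well).

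For (a), simple connectedness and nilpotency of $G$ imply that $\exp\colon L(G)\to G$ is a global analytic diffeomorphism, and pulling back the multiplication of~$G$ along $\exp$ yields exactly the BCH product~$*$ on $L(G)$. Thus $\exp$ is a topological group isomorphism from $(L(G),*)$ onto~$G$. Naturality of~$\exp$ gives $\alpha\circ\exp=\exp\circ L(\alpha)$, so $\exp$ is an isomorphism of contraction groups from $((L(G),*),L(\alpha))$ to $(G,\alpha)$. I will identify the two henceforth.

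Under this identification, the crucial observation is that for each $x\in L(G)$ and each $n\in\Z$, the $n$-fold $*$-power of~$x$ equals $nx$, because every iterated bracket in the BCH series has a repeated argument and therefore vanishes. So on powers, the group structure is linear. Now the remaining claims reduce to linear algebra in the finite-dimensional real vector space $L(G)$. For (b): if $nx=0$ for some $n\in\N$ then $x=0$, so $G$ is torsion-free, and for each $y\in L(G)$ and $n\in\N$ the unique $x$ with $nx=y$ is $x=y/n$, giving unique divisibility. For (c): if $g\neq e$ corresponds to $x\neq 0$, then $\langle g\rangle=\Z x$ is a closed, discrete, unbounded subset of~$L(G)$ with respect to any vector-space norm and hence is not relatively compact, so topological periodicity forces $g=e$. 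For (d): $g^{k_n}$ corresponds to $k_n x$, and $\|k_n x\|=k_n\|x\|\to\infty$ as $k_n\to\infty$, preventing convergence in $L(G)$.

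The main (and really only) substantive ingredient is Siebert's theorem together with the BCH diffeomorphism; everything after the identification in (a) is elementary. I therefore do not foresee any genuine obstacle beyond pinpointing the correct citations in~\cite{Sie}.
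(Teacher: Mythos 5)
Your proposal is correct and follows essentially the same route as the paper: Siebert's structure theorem, the global BCH identification $(G,\alpha)\cong((L(G),*),L(\alpha))$ via $\exp$ and its naturality, the key identity $x^n=nx$ for $*$-powers, and then elementary vector-space arguments for (b)--(d). The only cosmetic difference is that you justify $x^n=nx$ by the vanishing of brackets of proportional elements, whereas the paper uses the one-parameter subgroups $t\mapsto tx$; both are immediate.
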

\begin{proof}
By \cite[2.3 and Corollary~2.4]{Sie},
$G$ is a simply connected real Lie group whose Lie algebra $L(G)$ is nilpotent.
Hence~$G$ is nilpotent (cf.\ Proposition~12 in \cite[Chapter~III, \S9.5]{Bou}).
Since $L(G)$ is nilpotent,
the BCH-series
converges on all of $L(G)\times L(G)$
and makes $L(G)$ a Lie group with neutral element~$0$
(see explanations after Proposition~12 in \cite[Chapter~III, \S9.5]{Bou}).
Since $G$ is simply connected and nilpotent,
the exponential map $\exp_G\colon L(G)\to G$
is an isomorphism of Lie groups from $(L(G),*)$ to~$G$,
by Proposition~13 in \cite[Chapter~III, \S9.5]{Bou}.
Since
\begin{equation}\label{naturali}
\exp_G\circ L(\alpha)=\alpha\circ \exp_G
\end{equation}
by Proposition~10 in \cite[Chapter~III, \S6.4]{Bou},
we see that the automorphism $L(\alpha)\colon L(G)\to L(G)$
of $(L(G),*)$ (and of $(L(G),+)$) is contractive. By~(\ref{naturali}), $\exp_G$ is an isomorphism
of contraction groups from $((L(G),*),L(\alpha))$ to $(G,\alpha)$,
establishing~(a).\\[2.3mm]
Using the BCH-formula, for $x\in L(G)$ we find that the map $\R\to L(G)$, $t\mto tx$
is a continuous group homomorphism to $(L(G),*)$, whence $\R x$ is a subgroup of $(L(G),*)$
and
\begin{equation}\label{powermult}
x^n=nx\quad\mbox{for all $\,n\in\Z$.}
\end{equation}
Since $(L(G),+)$ is torsion-free and each element of this group
is uniquely divisible, we deduce with~(\ref{powermult})
that also the group $(L(G),*)$
is torsion-free and each of its elements is uniquely divisible (entailing~(b)).
If $0\not=x\in L(G)$ was topologically periodic in $(L(G),*)$,
then the discrete group $\langle x\rangle=\Z x\cong\Z$ would be relatively
compact in $\R x\cong\R$, a contradiction. Thus~(c) holds.\\[2.3mm]
To establish~(d), let $\|.\|$ be a norm on~$L(G)$. If $0\not=x\in L(G)$ and $k_n\to\infty$,
then $\|x^{k_n}\|=\|k_nx\|=k_n\|x\|\to\infty$, whence $x^{k_n}$
cannot converge in $L(G)$.\vspace{4mm}
\end{proof}
{\bf Basic properties of {\boldmath$p$-adic} contraction groups}\\[2.3mm]
For $p$-adic contraction groups, a counterpart of Lemma~\ref{conncase}
is available.
\begin{la}\label{padcase}
If $(G,\alpha)$ is a locally compact contraction group such that~$G$
is a $p$-adic Lie group, then $G$ is nilpotent,
$L(G)$ is a nilpotent $p$-adic Lie algebra,
$L(\alpha)\colon L(G)\to L(G)$ is contractive and we have:
\begin{itemize}
\item[\rm(a)]
$(G,\alpha)$ is isomorphic to $((L(G),*),L(\alpha))$
as a contraction group, where $*\colon L(G)\times L(G)\to L(G)$
is the BCH-multiplication.
\item[\rm(b)]
$G$ is torsion-free and each $g\in G$ is uniquely divisible.
\item[\rm(c)]
For each $g\in G$, we have $\lim_{n\to\infty}g^{p^n}=e$.
\end{itemize}
\end{la}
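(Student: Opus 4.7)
The plan is to follow the template of the proof of Lemma~\ref{conncase}, replacing Siebert's structural result for connected contraction groups with Wang's theorem \cite{Wan} for the $p$-adic case. I first want to show that $L(\alpha)$ is contractive on the $p$-adic vector space $L(G)$. Let $\Omega\sub L(G)$ be an open $0$-neighbourhood such that $\exp_G|_\Omega$ is a chart diffeomorphism onto an open identity neighbourhood in~$G$. For a given $x\in L(G)$, pick a compact identity neighbourhood $V\sub\exp_G(\Omega)$ in~$G$; by~\ref{compacontra}, $\alpha^n(\exp_G(x))\in V$ for $n$ large, and via the naturality $\exp_G\circ L(\alpha)=\alpha\circ\exp_G$ (valid wherever both sides are defined) together with the $p$-adic chart, this forces $L(\alpha)^n(x)\to 0$ in $L(G)$. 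As $L(\alpha)$ is $\Q_p$-linear, it is contractive on $L(G)$.

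Next, I would establish nilpotency of $L(G)$ (hence of $G$). One route is to invoke Wang's theorem \cite{Wan} directly: $G$ is the group of $\Q_p$-rational points of a unipotent algebraic $\Q_p$-group, and unipotence forces both~$G$ and $L(G)$ to be nilpotent. A self-contained argument decomposes $L(G)\otimes_{\Q_p}\wb{\Q_p}$ into generalized eigenspaces of $L(\alpha)$; contractivity gives $|\lambda|<1$ for every eigenvalue, and since $[L(G)_\lambda,L(G)_\mu]\sub L(G)_{\lambda\mu}$, iterated brackets land in eigenspaces whose eigenvalues have absolute value tending to~$0$, forcing them eventually to vanish.

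For (a), once $L(G)$ is nilpotent, the BCH series converges on all of $L(G)\times L(G)$ and makes $(L(G),*)$ a $p$-adic Lie group with neutral element~$0$, and $\exp_G$ is a local isomorphism of Lie groups from $(L(G),*)$ to $G$ near~$0$. To extend $\exp_G$ globally, I use that for any $x\in L(G)$ we have $L(\alpha)^n(x)\in\Omega$ for all sufficiently large $n$, and define
\[
\exp_G(x):=\alpha^{-n}\!\bigl(\exp_G(L(\alpha)^n(x))\bigr),
\]
which is independent of $n$ by the local naturality relation. Routine checks (using continuity of~$\alpha^{-1}$ and the homomorphism property of the local $\exp_G$) show the extension is a continuous homomorphism from $(L(G),*)$ to~$G$; since $G$ is generated by any identity neighbourhood (by contractivity), and $\exp_G$ is a local diffeomorphism at~$0$, the extension is surjective and open, while injectivity follows from Wang's explicit description (or by transporting the local inverse with~$\alpha^{-n}$). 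Naturality then gives the claimed isomorphism of contraction groups.

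Assertions~(b) and~(c) transfer verbatim from the BCH model. For~(b), fix $x\in L(G)$; since $[x,x]=0$, the BCH formula collapses on $\Q_p x$ and yields the group homomorphism $\Q_p\to(L(G),*)$, $t\mto tx$, so $x^n=nx$ for $n\in\Z$. As $(L(G),+)$ is a $\Q_p$-vector space and hence torsion-free and uniquely divisible by every integer, the same holds for $(L(G),*)$. For~(c), writing $g=\exp_G(x)$ we compute $g^{p^n}=\exp_G(p^n x)$, and $p^n x\to 0$ in $L(G)$ because $|p^n|_p=p^{-n}\to 0$. The main obstacle is the global extension of $\exp_G$ in step~(a); this is where appealing to Wang's classification substantially shortens the argument compared with a hands-on verification.
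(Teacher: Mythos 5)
Your route is in substance the paper's: the paper also reduces everything to Wang's results, quoting \cite[Theorem~3.5\,(ii) and (iv)]{Wan} for nilpotency and for contractivity of $L(\alpha)$, and \cite[Proposition~2.2]{Wan} for the global, equivariant extension of the local exponential --- which is exactly your formula $\exp_G(x)=\alpha^{-n}\bigl(\exp_G(L(\alpha)^n(x))\bigr)$ --- after first arranging, via \ref{inv-sub} and \ref{compacontra}, that $L(\alpha)(V)\sub V$ and $\exp_G\circ L(\alpha)|_V=\alpha\circ\exp_G$. Your proofs of (b) and (c) via $x^n=nx$ in the BCH group are verbatim the paper's, and your eigenvalue argument for nilpotency of $L(G)$ is a correct (slightly more self-contained) substitute for the citation of Wang.

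Two steps in your sketch need repair, though. First, your argument that $L(\alpha)$ is contractive is circular as written: to pass from $\alpha^n(\exp_G(x))\to e$ to $L(\alpha)^n(x)\to 0$ you must apply the naturality relation $n$ times, and this requires knowing beforehand that the intermediate points $L(\alpha)^k(x)$ remain in the neighbourhood on which that relation is valid --- which is essentially what you are trying to prove. The standard fix is to first choose a compact open subgroup $U\sub G$ with $\alpha(U)\sub U$ (\ref{inv-sub}), shrunk via \ref{compacontra} so that $U$ lies in the chart and naturality holds on $\log(U)$; then $\log(U)$ is $L(\alpha)$-invariant, the iteration is legitimate there, and an arbitrary $x\in L(G)$ is handled by scaling, since $p^kx\in\log(U)$ for large $k$ and $L(\alpha)^n(x)=p^{-k}L(\alpha)^n(p^kx)\to 0$ by linearity (or one simply quotes Wang, as the paper does). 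Second, the claim that ``$G$ is generated by any identity neighbourhood (by contractivity)'' is false for totally disconnected groups: in $\bF_p(\!(t)\!)$ the subgroup generated by the identity neighbourhood $\bF_p[\![t]\!]$ is $\bF_p[\![t]\!]$ itself, not the whole group. Surjectivity of the extended exponential should instead be deduced from equivariance: given $g\in G$, one has $\alpha^n(g)\in\exp_G(V)$ for some $n$ by \ref{compacontra}, whence $g=\exp_G(L(\alpha)^{-n}(y))$ for a suitable $y\in V$. With these two corrections your argument is complete and coincides with the paper's proof.
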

\begin{proof}
By \cite[Theorem~3.5 (ii) and (iv)]{Wan},
$G$ is nilpotent and $L(\alpha)$ is contractive.
Hence $L(G)$ is a nilpotent $p$-adic Lie algebra (by Proposition~12 in \cite[Chapter~III, \S9.4]{Bou}),
ensuring that the BCH-series
converges on all of $L(G)\times L(G)$
and makes $L(G)$ a $p$-adic Lie group
(see explanations after Proposition~12 in \cite[Chapter~III, \S9.5]{Bou}).
The Lie algebra automorphism $L(\alpha)$
also is an automorphism of $(L(G),*)$
(as the BCH-series is given by nested Lie brackets).
There exists a compact open subgroup $V$ of $(L(G),*)$
and an exponential function $\exp_G\colon V\to G$
whose image~$U$ is a compact open subgroup of~$G$,
such that $\exp_G\colon (V,*)\to U$ is an isomorphism of $p$-adic Lie groups,
and such that
\[
\exp_G\circ L(\alpha)|_W=\alpha\circ\exp_G|_W
\]
for some compact open subgroup~$W$ of $(V,*)$ such that $L(\alpha)(W)\sub V$
(cf.\ Proposition~8 in \cite[Chapter~III, \S4.4]{Bou}).
Since~$L(\alpha)$ is contractive, there exists a compact open subgroup
$Q$ of $(L(G),*)$ such that $Q\sub W$ and $L(\alpha)(Q)\sub Q$ (cf.\ \ref{inv-sub} and
\ref{compacontra});
after replacing both $V$ and $W$ with~$Q$, we may assume that
$L(\alpha)(V)\sub V$ and $\exp_G\circ L(\alpha)|_V=\alpha\circ \exp_G$.
Now \cite[Proposition~2.2]{Wan} provides an isomorphism
$\phi\colon ((L(G),*),L(\alpha))\to (G,\alpha)$ of contraction groups
which extends~$\exp_G$ (establishing~(a)).\\[2.3mm]
Since $\Q_p\to L(G)$, $t\mto tx$ is a continuous homomorphism to $(L(G),*)$
for each $x\in L(G)$, we see that (\ref{powermult}) holds
and deduce as in the proof of Lemma~\ref{conncase}
that $(L(G),*)$ is torsion-free and all of its
elements are uniquely divisible (as both properties are inherited from $(L(G),+)$).
Thus~(b) holds.\\[2.3mm]
Using~(\ref{powermult}), we see that $x^{p^n}=p^nx\to 0$
as $n\to\infty$ for each element $x$ of $(L(G),*)$, from which~(c) follows.
\end{proof}
\begin{la}\label{torfreepad}
Let $p\not=q$ be prime numbers,
$G$ be a torsion-free $p$-adic Lie group $($e.g.,
a contractible $p$-adic Lie group$)$,
and $e\not=g\in G$. Then
the sequence $(g^{q^n})_{n\in\N}$ does not converge to~$e$ in~$G$.
\end{la}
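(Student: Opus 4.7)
I would argue by contradiction. Assume $g^{q^n}\to e$ in~$G$. Since~$G$ is a $p$-adic Lie group, it contains a compact open pro-$p$ subgroup~$U$ (cf.\ \cite[Theorems~8.29 and~8.31]{Dix}). The assumed convergence yields some $N\in\N$ with $h:=g^{q^N}\in U$, and then
\[
h^{q^m}=g^{q^{N+m}}\longrightarrow e\quad\mbox{as}\quad m\to\infty.
\]
Because $G$ is torsion-free and $g\ne e$, the element~$h$ has infinite order: if $h^k=e$ for some $k\in\N$, then $g^{q^Nk}=e$ would force $g=e$, a contradiction.

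Next I would analyse the closure $\wb{\langle h\rangle}$ inside the compact pro-$p$ group~$U$. It is an infinite closed abelian subgroup topologically generated by the single element~$h$, so each of its continuous finite quotients is a cyclic $p$-group. Hence $\wb{\langle h\rangle}$ is the inverse limit of an unbounded chain of cyclic $p$-groups $\Z/p^{k_n}\Z$, and is therefore topologically isomorphic to $(\Z_p,+)$. Under such an isomorphism, $h$ corresponds to some topological generator $u\in\Z_p^\times$, and consequently $h^{q^m}$ corresponds to $q^m u\in\Z_p$.

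Now the crux: since $p\ne q$, the integer~$q$ is a unit of~$\Z_p$, so $|q^mu|_p=|u|_p=1$ for every $m\in\N_0$. Therefore $q^m u\not\to 0$ in~$\Z_p$, whence $h^{q^m}\not\to e$ in~$G$, contradicting the display above. The only step requiring care is the identification $\wb{\langle h\rangle}\cong\Z_p$, which rests on torsion-freeness of~$G$ to rule out a finite pro-cyclic $p$-group as the closure, together with the standard classification of pro-cyclic pro-$p$-groups. Once this is in hand the argument reduces to the triviality that~$q\in\Z_p^\times$ when $p\ne q$.
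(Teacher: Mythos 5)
Your argument is correct, but it takes a genuinely different route from the paper. The paper stays inside the $p$-adic Lie structure: it passes to an open subgroup identified, via a BCH chart, with an open subgroup of $(L(G),*)$, and then observes that for $x\neq 0$ one has $\|x^{q^n}\|=\|q^nx\|=|q|_p^n\|x\|=\|x\|$ for a norm on $L(G)$, so $x^{q^n}\not\to 0$. You instead use only that a $p$-adic Lie group is locally pro-$p$: you place $h=g^{q^N}$ in a compact open pro-$p$ subgroup, note that torsion-freeness makes $h\neq e$ of infinite order, identify the procyclic pro-$p$ group $\wb{\langle h\rangle}$ with $(\Z_p,+)$ (an infinite procyclic pro-$p$ group is $\Z_p$, and $h$ must go to a topological generator, i.e.\ a unit), and then conclude from $|q^mu|_p=|u|_p=1$ that $h^{q^m}\not\to e$. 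Both proofs ultimately hinge on $q\in\Z_p^\times$, but yours replaces the Lie-algebra chart and norm estimate by the classification of procyclic pro-$p$ groups, and in fact proves the stronger statement that the conclusion holds for every torsion-free locally pro-$p$ group, not just $p$-adic Lie groups; the paper's version is the quicker one given that the BCH identification is already set up and reused heavily elsewhere (Lemmas~\ref{conncase}, \ref{padcase}, \ref{insidenil}). All the steps you flag as needing care (closedness of $\wb{\langle h\rangle}$ in a pro-$p$ subgroup, cyclic finite quotients, infiniteness via torsion-freeness) do go through.
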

\begin{proof}
There exists an open $0$-neighbourhood $V\sub L(G)$
such that the BCH-series converges on~$V\times V$
and makes~$V$ a $p$-adic Lie group $(V,*)$, and an open subgroup $U\sub G$
which is isomorphic to $(V,*)$ (cf.\ Theorem~2 and Lemma~3 in \cite[Chapter III, \S4.2]{Bou}).
Let $\phi\colon U\to V$ be an isomorphism.
If there were $g\in G\setminus\{e\}$ such that $g^{q^n}\to e$,
then we could find $m\in \N$ such that $g^{q^n}\in U$ for all $n\geq m$.
Since~$G$ is torsion-free, $g^{q^m}\not=e$ and thus $x:=\phi(g^{q^m})\not=0$.
Choose a norm $\|.\|$ on the $p$-adic vector
space~$L(G)$.
We have $x^{q^n}=\phi(g^{q^{n+m}})\to 0$ as $n\to\infty$.
But $q$ has $p$-adic absolute value $|q|_p=1$,
as is well-known.\footnote{We have $ap+bq=1$ for suitable integers $a$ and $b$,
whence $bq=1-ap$ and hence $|b|_p|q|_p=1$. Since $|b|_p,|q|_p\leq 1$,
the latter entails $|q|_p=1$.}
Hence $\|x^{q^n}\|=\|q^n x\|=|q|_p^n\|x\|=\|x\|$ for all $n\in\N$
and thus $x^{q^n}\not\to 0$, contradiction.\vspace{4mm}
\end{proof}
{\bf Basic properties of torsion contraction groups}\vspace{.3mm}
\begin{la}\label{torisgp}
Let $(G,\alpha)$ be a locally compact contraction group
and $\tor(G)$ be the set of all torsion elements of~$G$.
Then $\tor(G)$ is a fully invariant closed subgroup of~$G$
and totally disconnected.
\end{la}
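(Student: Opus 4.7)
The plan is to reduce the claim to the totally disconnected case by means of Siebert's decomposition, and then invoke the corresponding known result from \cite{GW}. First I would write $G=G_e\times G_{\td}$ as recalled in the introduction, where $G_e$ is the connected component of the identity and $G_{\td}$ is an $\alpha$-stable totally disconnected closed normal subgroup. Since $G_e$ is a simply connected nilpotent real Lie group, Lemma~\ref{conncase}(b) shows that it is torsion-free. Consequently, any torsion element $(g_1,g_2)\in G_e\times G_{\td}$ satisfies $g_1=e$, so $\tor(G)$ is contained in (and can be identified with) $\tor(G_{\td})$.

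Next, I would note that $(G_{\td},\alpha|_{G_{\td}})$ is itself a totally disconnected locally compact contraction group, and appeal to the result established in \cite[Theorem~B]{GW} (already cited in the introduction to justify~(\ref{fulldec})) which asserts that the set of torsion elements in a totally disconnected locally compact contraction group is a closed subgroup. Combining this with the previous step yields that $\tor(G)=\tor(G_{\td})$ is a closed subgroup of $G$, and it is totally disconnected because it is a subset of the totally disconnected group~$G_{\td}$.

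Full invariance is then automatic and purely algebraic: for any (not necessarily continuous) endomorphism $\phi$ of~$G$, if $g\in\tor(G)$ with $g^n=e$, then $\phi(g)^n=\phi(g^n)=\phi(e)=e$, so $\phi(g)\in\tor(G)$. Applying this to $\phi=\alpha$ and $\phi=\alpha^{-1}$ in particular gives $\alpha$-stability. The only nontrivial ingredient is the subgroup property of $\tor(G_{\td})$, which is the main obstacle but is already handled by the cited work; everything else in the statement follows by the short reduction sketched above.
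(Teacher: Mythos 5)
Your proposal is correct and follows essentially the same route as the paper: decompose $G=G_e\times G_{\td}$ via Siebert's result, use that $G_e$ is torsion-free (Lemma~\ref{conncase}) to identify $\tor(G)=\tor(G_{\td})$, invoke \cite[Theorem~B]{GW} for closedness in the totally disconnected factor, and note that full invariance is a trivial algebraic fact.
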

\begin{proof}
By \cite[Proposition~4.2]{Sie},
$G$ has a totally disconnected, $\alpha$-stable closed normal subgroup~$D$
such that $G=G_e\times D$ internally as a topological group.
Since $G_e$ is torsion-free by Lemma~\ref{conncase},
we have $\tor(G)=\tor(D)$.
As $\tor(G)=\tor(D)$ is a closed subgroup of~$D$ by \cite[Theorem~B]{GW},
it also is a closed subgroup of~$G$ (and fully invariant
by a trivial argument).
\end{proof}
\begin{la}\label{torsioncase}
If $(G,\alpha)$ is a locally compact contraction group
and $G$ is a torsion group,
then $G$ is totally disconnected and has finite exponent.
If an element $g\in G$ is divisible, then $g=e$.
\end{la}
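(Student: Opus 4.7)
The proof splits naturally into three steps, each exploiting the structural results already in hand.

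First, I would show $G$ is totally disconnected. By Siebert's decomposition $G=G_e\times G_{\td}$ recalled in the introduction, and by Lemma~\ref{conncase}(b), the connected component~$G_e$ is torsion-free. Since $G$ is a torsion group, so is $G_e$, forcing $G_e=\{e\}$. Hence $G=G_{\td}$ is totally disconnected.

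Second, I would establish finite exponent by reducing to a finite composition series with factors of finite exponent. Since $G$ is now a totally disconnected, locally compact contraction group, \cite[Theorem~3.3]{GW} supplies a finite composition series of contraction groups
\[
\{e\}=G_0\lhd G_1\lhd\cdots\lhd G_n=G,
\]
whose simple quotients $G_j/G_{j-1}$ are, as recalled in the introduction, isomorphic to $F_j^{(-\N)}\times F_j^{\N_0}$ for finite simple groups $F_j$ (noting that no connected simple factor can arise, as these are torsion-free by Lemma~\ref{conncase}). Each such quotient has exponent dividing~$|F_j|$. Setting $N:=|F_1|\cdot|F_2|\cdots|F_n|$, a straightforward induction on~$j$ (lifting at each stage: if $G_{j-1}^N\subseteq\{e\}$ and $(G_j/G_{j-1})^N$ is trivial, then $G_j^N\subseteq G_{j-1}$ and squaring up gives $G_j^N\subseteq\{e\}$ after adjusting the exponent) shows $G^N=\{e\}$. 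So $G$ has finite exponent dividing~$N$.

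Third, if $g\in G$ is divisible, then in particular there exists $x\in G$ with $x^N=g$; but $x^N=e$ by Step~2, so $g=e$.

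The main obstacle is Step~2. A direct Baire-category argument applied to a compact open $\alpha$-stable subgroup $V\subseteq G$ (writing $V=\bigcup_n\{v\in V:v^n=e\}$ and extracting an open set of $n$-torsion elements) only yields a neighbourhood of some point consisting of elements of bounded order, which in the non-abelian setting does not translate into a uniform bound near the identity -- this is essentially the restricted Burnside problem for compact torsion groups. Passing instead to the finite composition series from \cite[Theorem~3.3]{GW}, whose simple factors have a completely explicit structure with \emph{a priori} finite exponent by \cite[Theorem~A]{GW}, sidesteps this difficulty entirely.
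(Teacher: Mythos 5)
Your proposal is correct and follows essentially the same route as the paper: total disconnectedness from the torsion-freeness of $G_e$ (the paper packages this as Lemma~\ref{torisgp}), finite exponent by climbing the composition series from \cite[Theorem~3.3]{GW} whose factors have finite exponent by \cite[Theorem~A]{GW}, and then divisibility is killed by the finite exponent. The only cosmetic difference is that you invoke the explicit shift-group description of the factors (your aside about excluding connected factors is unnecessary since $G$ is already totally disconnected; the factors to exclude are the torsion-free $p$-adic ones), whereas the paper simply cites \cite[Theorem~A\,(a)]{GW} for the finite-exponent property.
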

\begin{proof}
By Lemma~\ref{torisgp}, $G$ is totally disconnected.
The torsion group~$G$ has finite exponent, as
there exists a composition series $\{e\}=G_0\lhd G_1\lhd\cdots\lhd G_n=G$
of closed $\alpha$-stable subgroups
(see \cite[Theorem~3.3]{GW}) and each of the factors $G_j/G_{j-1}$
is a torsion group of finite exponent (cf.\ \cite[Theorem~A\,(a)]{GW}).
Being a torsion group of finite exponent, $G$ does not contain non-trivial
divisible elements.\vspace{5mm}
\end{proof}
{\bf Basic properties of contraction groups which are locally {\boldmath pro-$p$}}
\begin{la}\label{unionprop}
Let $(G,\alpha)$ be a totally disconnected,
locally compact contraction group.
If $G$ is locally pro-$p$ for some prime~$p$,
then every compact subgroup of~$G$ is a pro-$p$-group;
moreover,
$G$ is a union of compact open subgroups
which are pro-$p$-groups.
\end{la}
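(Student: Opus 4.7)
The plan is to deduce both assertions from a single compact-contractivity argument, starting with the claim about compact subgroups, because the union statement will then follow by producing one compact open pro-$p$ subgroup whose inverse images under iterates of $\alpha$ exhaust~$G$.

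First I would fix, by hypothesis, a compact open pro-$p$ subgroup $U\sub G$, and let $K$ be an arbitrary compact subgroup of~$G$. By \ref{compacontra}, the compactly contractive nature of~$\alpha$ yields some $N\in\N_0$ with $\alpha^N(K)\sub U$. Since $\alpha$ is an automorphism, $\alpha^N$ restricts to a topological group isomorphism $K\to\alpha^N(K)$; as $\alpha^N(K)$ is a closed (hence pro-$p$) subgroup of the pro-$p$ group $U$, we conclude that $K$ itself is a pro-$p$-group. This handles the first assertion.

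For the second, I would invoke \ref{inv-sub} to pick a compact open subgroup $V\sub G$ with $\alpha(V)\sub V$. By what has just been proved, $V$ is a pro-$p$-group. For each $n\in\N_0$, the set $\alpha^{-n}(V)$ is a compact open subgroup of~$G$, and the restriction $\alpha^{-n}|_V\colon V\to\alpha^{-n}(V)$ is an isomorphism of topological groups, so $\alpha^{-n}(V)$ is pro-$p$ as well. Finally, for any $g\in G$ we have $\alpha^n(g)\to e$, so eventually $\alpha^n(g)\in V$, which means $g\in\alpha^{-n}(V)$; hence $G=\bigcup_{n\in\N_0}\alpha^{-n}(V)$, exactly as in the proof of Lemma~\ref{tdtopper}.

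There is no serious obstacle here: the proof is essentially a two-line combination of compact contractivity, $\alpha$-invariance of a suitable compact open subgroup, and the permanence of the pro-$p$ property under topological isomorphism and passage to closed subgroups. The only point worth stating carefully is that the $\alpha$-stable $V$ produced by \ref{inv-sub} need not a priori lie inside the given pro-$p$ subgroup~$U$, which is precisely why the compact-subgroup statement is proved first and then applied to~$V$.
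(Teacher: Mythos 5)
Your proof is correct and follows essentially the same route as the paper: apply compact contractivity to map a compact subgroup into the given pro-$p$ subgroup and transport the pro-$p$ property back along the isomorphism, then cover $G$ by the compact open subgroups $\alpha^{-n}(V)$. The only cosmetic difference is that you re-derive the covering (and the $\alpha$-stable $V$ from \ref{inv-sub}) rather than simply citing Lemma~\ref{tdtopper} and applying the first assertion to each compact open subgroup, as the paper does.
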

\begin{proof}
Let $K$ be a compact subgroup of~$G$ and $U$ be a compact open subgroup
of~$G$ which is a pro-$p$-group. 
Since~$\alpha$ is compactly contractive, we have $\alpha^n(K)\sub U$ for some $n\in\N$,
whence~$\alpha^n(K)$ is a pro-$p$-group by \cite[Proposition~1.11\,(i)]{Dix}.
Since $\alpha^n|_K\colon K\to \alpha^n(K)$ is an isomorphism,
also~$K$ is a pro-$p$-group.
By Lemma~\ref{tdtopper},
$G$ is a union of compact open subgroups;
any such is a pro-$p$-group, as just explained.
\end{proof}
We shall use the following well-known fact:
\begin{la}\label{homspropq}
Let $p\not=q$ be primes,
$P$ be a pro-$p$-group and $Q$ be a pro-$q$-group.
If $\phi\colon P\to Q$ is a continuous
group homomorphism, then $\phi(x)=e$ for all $x\in P$.
\end{la}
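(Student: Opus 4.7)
The plan is to exploit the fundamental fact about pro-$\ell$-groups: for a pro-$\ell$-group $R$, the open normal subgroups form a neighbourhood basis of~$e$, and each quotient $R/M$ by an open normal subgroup~$M$ is a finite $\ell$-group. Combined with the incompatibility of finite $p$-groups and finite $q$-groups (Lagrange's theorem forces the only common subgroup to be trivial), this should force $\phi$ to be trivial.

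Concretely, I would argue as follows. Fix an open normal subgroup $N\sub Q$ with $Q/N$ a finite $q$-group; by definition of pro-$q$, such $N$ form a neighbourhood basis of~$e$ in~$Q$. Since $\phi$ is continuous and $P$ is compact, $M:=\phi^{-1}(N)$ is an open normal subgroup of~$P$, and the induced injection $P/M\hookrightarrow Q/N$ exhibits $P/M$ as a finite subgroup of the finite $q$-group $Q/N$. On the other hand, since~$P$ is pro-$p$, the quotient $P/M$ is a finite $p$-group. A finite group whose order is simultaneously a power of~$p$ and a power of~$q$ must be trivial, so $P=M$, i.e.\ $\phi(P)\sub N$.

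Since this holds for every open normal subgroup~$N$ of~$Q$ with $Q/N$ a finite $q$-group, and the intersection of all such~$N$ is $\{e\}$ (as~$Q$ is pro-$q$ and hence Hausdorff with these $N$ as an identity basis), I conclude $\phi(P)\sub\{e\}$, as desired. The only mild subtlety is justifying that $P/M$ is a $p$-group for arbitrary open normal $M\sub P$, but this is immediate: $M$ contains some open normal $U$ with $P/U$ a finite $p$-group (by the defining property of pro-$p$), and then $P/M$ is a quotient of~$P/U$, hence a $p$-group itself.
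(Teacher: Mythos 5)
Your argument is correct. Every step checks out: $\phi^{-1}(N)$ is open and normal in $P$ (continuity plus normality of $N$ suffice; the compactness of $P$ you invoke is not actually needed there), the induced map $P/\phi^{-1}(N)\hookrightarrow Q/N$ is injective, $P/\phi^{-1}(N)$ is a finite $p$-group because $\phi^{-1}(N)$ contains an open normal $U$ with $P/U$ a $p$-group, and Lagrange then forces $P=\phi^{-1}(N)$, so $\phi(P)$ lies in every open normal subgroup of $Q$ and is therefore trivial.

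Your route differs from the paper's in a way worth noting. The paper argues on the image side: it cites the structural facts that $\phi(P)\cong P/\ker\phi$ is a pro-$p$-group and that $\phi(P)$, being a compact subgroup of $Q$, is a pro-$q$-group (both from Dixon--du Sautoy--Mann--Segal, Proposition~1.11), and then observes that each finite quotient $\phi(P)/U$ is simultaneously a $p$-group and a $q$-group, hence trivial. You instead pull open normal subgroups of $Q$ back to $P$ and compare a finite $p$-quotient of $P$ with a finite subgroup of a $q$-group, so you never need to know anything about the profinite structure of the image or of compact subgroups; your proof is self-contained modulo the definition of pro-$\ell$-groups and the fact that open normal subgroups form an identity neighbourhood basis. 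What the paper's version buys is brevity and reuse of a standard reference; what yours buys is independence from those quoted propositions, at the cost of a slightly longer elementary argument. Both hinge on the same finite-group incompatibility, so either is perfectly acceptable here.
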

\begin{proof}
On the one hand, $\phi(P)$ is a pro-$p$-group,
being isomorphic to $P/\ker(\phi)$ (see \cite[Proposition~1.11\,(b)]{Dix}).
On the other hand, $\phi(P)$ is a pro-$q$-group, being
a compact subgroup of~$Q$ (see \cite[Proposition~1.11\,(a)]{Dix}). Hence $\phi(P)/U$
is both a $p$-group and a $q$-group
and thus $\phi(P)/U=\{e\}$,
for each open normal subgroup $U\sub \phi(P)$. As a consequence,
$\phi(P)=\{e\}$.
\end{proof}
$\;$\vspace{-8mm}\pagebreak

\noindent
{\bf Continuous group homomorphisms between contraction groups}
\begin{la}\label{opmaphere}
Let $\phi\colon G\to H$ be a surjective, continuous
group homomorphism between contractible locally compact groups~$G$
and~$H$. Then $\phi$ is an open map.
If~$\phi$ is bijective, then $\phi$ is a homeomorphism.
\end{la}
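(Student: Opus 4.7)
The plan is to reduce the lemma to the classical open mapping theorem for locally compact groups, whose standard hypothesis is that the domain be $\sigma$-compact. So the main task is to observe that any contractible locally compact group is automatically $\sigma$-compact.

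First I would fix a contractive automorphism $\alpha\colon G\to G$ and a compact identity neighbourhood $V\subseteq G$. By the compact contractivity recalled in~\ref{compacontra}, applied to the singleton $\{g\}$ for any $g\in G$, there exists $n\in\N_0$ with $\alpha^n(g)\in V$, i.e.\ $g\in\alpha^{-n}(V)$. Hence
\[
G\;=\;\bigcup_{n\in\N_0}\alpha^{-n}(V),
\]
a countable union of compact sets (since $\alpha$ is a homeomorphism), so $G$ is $\sigma$-compact.

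Next, having established $\sigma$-compactness of $G$, I would invoke the classical open mapping theorem for locally compact groups (see, e.g., \cite[Theorem 5.29]{HaR}): every surjective, continuous homomorphism from a $\sigma$-compact locally compact group onto a locally compact group is open. Applied to $\phi\colon G\to H$, this yields that $\phi$ is an open map. For the second assertion, if $\phi$ is moreover bijective, then an open continuous bijection has continuous inverse, so $\phi$ is a homeomorphism.

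There is really no serious obstacle; the only point requiring argument is the reduction to $\sigma$-compactness, and this is immediate from compact contractivity. One might alternatively invoke the Siebert decomposition $G=G_e\times G_{\td}$, noting that $G_e$ is a simply connected real Lie group and that $G_{\td}$ is $\sigma$-compact via an $\alpha$-stable compact open subgroup (cf.\ \ref{inv-sub}), but the direct argument above avoids this detour.
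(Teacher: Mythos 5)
Your argument is correct and matches the paper's proof: the paper also reduces to the Open Mapping Theorem \cite[Theorem 5.29]{HaR} after noting that contractible locally compact groups are $\sigma$-compact (citing \cite[1.8(a)]{Sie} rather than giving your short direct argument with $G=\bigcup_{n\in\N_0}\alpha^{-n}(V)$, which is fine).
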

\begin{proof}
Since~$G$ and $H$ are $\sigma$-compact by \cite[1.8(a)]{Sie}
and locally compact, the Open Mapping Theorem
(e.g., \cite[Theorem 5.29]{HaR}) applies to~$\phi$.
\end{proof}
\begin{la}\label{nohoms}
Let $p\not=q$ be primes,
$G$ and $H$ be
contractible locally compact groups and $\phi\colon G\to H$
be a continuous group homomorphism.
Then $\phi=e$ in each of the following cases:
\begin{itemize}
\item[{\rm(a)}]
$G$ is connected and $H$ is a torsion group or a $p$-adic Lie group.
\item[{\rm(b)}]
$G$ is a $p$-adic Lie group and
$H$ is a torsion group or a $q$-adic Lie group.
\item[{\rm(c)}]
$G$ is a torsion group and $H$ is a $p$-adic Lie group.
\item[{\rm(d)}]
$G$ is locally pro-$p$ and $H$ is locally pro-$q$.
\item[{\rm(e)}]
$G$ is totally disconnected and~$H$ is connected.
\end{itemize}
\end{la}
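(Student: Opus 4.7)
My plan is to handle each of the five cases separately, in each case pairing a structural feature of~$G$ with an incompatible feature of~$H$, using only the lemmas already proved in this section.

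Cases (a), (c) and (e) can be settled by dichotomies between connectedness, torsion, and periodicity. In~(a), $G$ is connected, so $\phi(G)$ is a connected subset of~$H$; but if $H$ is a torsion group then $H$ is totally disconnected by Lemma~\ref{torsioncase}, and if $H$ is a $p$-adic Lie group then $H$ is totally disconnected as a consequence of its $p$-adic manifold structure, so in either sub-case $\phi(G)\subseteq H_e=\{e\}$. In~(c), every $g\in G$ has finite order, hence so does $\phi(g)$; but $H$ is torsion-free by Lemma~\ref{padcase}(b), forcing $\phi(g)=e$. In~(e), every $g\in G$ is topologically periodic by Lemma~\ref{tdtopper}, hence so is $\phi(g)$ by~\ref{diviper}; but Lemma~\ref{conncase}(c) says only~$e$ is topologically periodic in the connected contraction group~$H$.

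For~(b), I split on the nature of~$H$. If $H$ is a torsion group, I exploit that every $g\in G$ is (uniquely) divisible by Lemma~\ref{padcase}(b), so $\phi(g)$ is divisible in~$H$; Lemma~\ref{torsioncase} then yields $\phi(g)=e$. If $H$ is a $q$-adic Lie group with $q\ne p$, I use Lemma~\ref{padcase}(c) to conclude $g^{p^n}\to e$, whence $\phi(g)^{p^n}\to e$ in~$H$; but $H$ is torsion-free, so Lemma~\ref{torfreepad} (with the roles of $p$ and~$q$ interchanged, the prime $p$ of our setting playing the role of the ``$q$'' in the lemma) forces $\phi(g)=e$.

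The remaining case~(d) is the most involved but still direct. The preliminary observation is that both $G$ and~$H$ are totally disconnected: any compact open pro-$p$-subgroup of~$G$ is clopen and thus contains~$G_e$, but a connected subgroup of a totally disconnected pro-$p$-group must be trivial, so $G_e=\{e\}$; similarly for~$H$. Lemma~\ref{tdtopper} then exhibits~$G$ as a union of compact open subgroups, each of which is pro-$p$ by Lemma~\ref{unionprop}. For any such subgroup~$U$, the image $\phi(U)$ is a compact subgroup of~$H$, hence pro-$q$ by Lemma~\ref{unionprop} applied to~$H$, and Lemma~\ref{homspropq} gives $\phi|_U=e$. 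Taking the union over all such~$U$ completes the proof. The only mild obstacle is the preliminary total-disconnectedness observation; after that, everything is a direct invocation of the machinery already in place.
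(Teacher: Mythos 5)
Your proof is correct and follows essentially the same route as the paper's: (a), (c), (e) via the connectedness/torsion/topological-periodicity dichotomies, (d) via Lemmas~\ref{unionprop} and~\ref{homspropq}, and the torsion sub-case of (b) via divisibility and Lemma~\ref{torsioncase}. The only deviations are harmless: for the $q$-adic sub-case of (b) the paper simply invokes (d), whereas you use the $p^n$-th power argument that the paper records separately as Lemma~\ref{varpq}; and in (d) you make explicit the total disconnectedness of $G$ and~$H$ needed to apply Lemma~\ref{unionprop}, which the paper leaves implicit.
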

\begin{proof}
(a) If $H$ is a torsion group, then $H$ is totally disconnected (see \ref{torsioncase}),
and so is every $p$-adic Lie group. As $\phi(G)$ is connected,
$\phi(G)=\{e\}$ follows.\vspace{1.5mm}

(b) If $H$ is a torsion group, we exploit
that each $g\in G$ is divisible (by Lemma~\ref{padcase}\,(b)).
Hence $\phi(g)$ is divisible and thus $\phi(g)=e$,
by Lemma~\ref{torsioncase}.
If $H$ is a $q$-adic Lie group, then the assertion is a special case of~(d).\vspace{1.5mm}

(c) Since $H$ is torsion-free as recalled in \ref{padcase} while $\phi(G)\sub H$ is a torsion group,
we must have $\phi(G)=\{e\}$.\vspace{1.5mm}

(d) Given $g\in G$, there exists a compact open subgroup~$U$ of~$G$
which is a pro-$p$-group and contains~$g$ (see Lemma~\ref{unionprop}).
Then $\phi(U)$ is a compact subgroup of~$H$ and hence a pro-$q$-group,
by Lemma~\ref{unionprop}. By Lemma~\ref{homspropq},
$\phi|_U\colon U\to \phi(U)$ is the constant map~$e$,
whence $\phi(g)=e$ in particular.\vspace{1.5mm}

(e) Each $g\in G$ is topologically periodic (see Lemma~\ref{tdtopper}),
whence $\phi(g)$ is topologically periodic and thus
$\phi(g)=e$ (see Lemma~\ref{conncase}).
\end{proof}
\begin{la}\label{varpq}
Let $p\not=q$ be prime numbers
and $\phi\colon G\to H$ be a continuous group homomorphism
from a contractible $p$-adic Lie group to a torsion-free
$q$-adic Lie group~$H$. Then $\phi=e$.
\end{la}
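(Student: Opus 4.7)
The plan is to reduce the lemma to a direct combination of two facts already in hand, one about the source $G$ and one about the target $H$. On the source side, Lemma \ref{padcase}(c), applied to $G$ equipped with any contractive automorphism witnessing its contractibility, gives
\[
\lim_{n\to\infty} g^{p^n} = e \quad \text{for every } g \in G.
\]
On the target side, Lemma \ref{torfreepad} (with the roles of the two primes interchanged, which is legitimate because only $p \neq q$ is required) asserts that in the torsion-free $q$-adic Lie group $H$, for every $h \neq e$ the sequence $(h^{p^n})_{n\in\N}$ fails to converge to $e$.

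First I would fix an arbitrary $g \in G$ and use continuity of $\phi$ together with the first fact to obtain
\[
\phi(g)^{p^n} \;=\; \phi(g^{p^n}) \;\longrightarrow\; \phi(e) \;=\; e
\]
in $H$. Then I would invoke the contrapositive of the second fact: since $\phi(g)^{p^n} \to e$, the element $\phi(g) \in H$ must equal $e$. As $g$ was arbitrary, $\phi$ is identically $e$.

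There is essentially no obstacle; the only subtlety worth flagging is that $G$ is assumed merely contractible (not equipped with a chosen automorphism), so one must observe that the conclusion $g^{p^n}\to e$ of Lemma \ref{padcase}(c) is a purely group-theoretic consequence that does not depend on which contractive $\alpha$ is used. The lemma exists precisely to cover the gap left by Lemma \ref{nohoms}(b), where the target was assumed contractible (i.e.\ a $q$-adic Lie group admitting a contractive automorphism); here the weaker hypothesis that $H$ is torsion-free suffices, because Lemma \ref{torfreepad} already only requires torsion-freeness of the target.
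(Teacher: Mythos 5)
Your proof is correct and is essentially the paper's own argument: apply Lemma \ref{padcase}(c) to get $g^{p^n}\to e$ in $G$, push this through $\phi$ by continuity, and conclude $\phi(g)=e$ from Lemma \ref{torfreepad} with the roles of the primes interchanged (the paper's citation of ``Lemma \ref{torfreepad}(c)'' is a slip for exactly this combination). Your remark that the conclusion of Lemma \ref{padcase}(c) does not depend on the choice of contractive automorphism is a fair and harmless clarification.
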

\begin{proof}
For each $g\in G$, we have $g^{p^n}\to e$ as $n\to\infty$,
whence $\phi(g)^{p^n}\to e$ and thus $\phi(g)=e$, by Lemma~\ref{torfreepad}\,(c).
\end{proof}
\begin{la}\label{complementsDE}
Let $\phi\colon G\to H$ be a continuous group homomorphism
between contractible locally compact groups $G$ and $H$.
Assume that
\[
G=G_e\times D\quad\mbox{and}\quad H=H_e\times E
\]
internally as topological groups for
closed normal subgroups $D\sub G$ and
$E\sub H$.
Then $\phi(G_e)\sub H_e$ and $\phi(D)\sub E$. If $\phi$ is surjective, then $\phi(G_e)=H_e$
and $\phi(D)=E$.
\end{la}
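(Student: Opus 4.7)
The plan is to establish the two inclusions separately and then deduce the surjective case by composing with the projections. First, $\phi(G_e)\subseteq H_e$ is immediate: $\phi(G_e)$ is a connected subset of~$H$ containing~$e$, hence lies in $H_e$.

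For $\phi(D)\subseteq E$, the plan is to apply Lemma~\ref{nohoms}(e) to the continuous homomorphism $\pi\circ\phi|_D\colon D\to H_e$, where $\pi\colon H=H_e\times E\to H_e$ denotes the projection onto the first factor. To do so I first need $D$ and $H_e$ to be contractible locally compact groups. Fix contractive automorphisms~$\alpha$ of~$G$ and~$\beta$ of~$H$. The connected component $H_e$ is topologically characteristic in~$H$, so $\beta(H_e)=H_e$ and $\beta|_{H_e}$ is a contractive automorphism of~$H_e$. Similarly $\alpha(G_e)=G_e$, so $\alpha$ descends to a contractive automorphism~$\bar\alpha$ on~$G/G_e$; since $G=G_e\times D$ internally, the composite $D\hookrightarrow G\to G/G_e$ is a topological isomorphism, and transporting $\bar\alpha$ back yields a contractive automorphism of~$D$. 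In particular, $D$ is totally disconnected (being a complement of~$G_e$ in the internal product). Lemma~\ref{nohoms}(e) then forces $\pi\circ\phi|_D$ to be trivial, so $\phi(D)\subseteq\ker\pi=E$.

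For the surjective case, the internal direct product structure gives $G=G_e D$ and hence $\phi(G)=\phi(G_e)\phi(D)$; thus $H=\phi(G_e)\phi(D)\subseteq H_e E=H$. Applying $\pi$ to both sides yields
\[
H_e=\pi(H)=\pi(\phi(G_e))\cdot\pi(\phi(D))=\phi(G_e)\cdot\{e\}=\phi(G_e),
\]
and the analogous computation with the complementary projection $H\to E$ delivers $E=\phi(D)$. The only non-routine point in the argument is the observation that $D$ inherits a contractive automorphism through the isomorphism $D\cong G/G_e$ (so that Lemma~\ref{nohoms}(e) is applicable); everything else is a bookkeeping exercise with the direct product decompositions.
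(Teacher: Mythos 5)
Your proof is correct and follows essentially the same route as the paper: the trivial inclusion $\phi(G_e)\subseteq H_e$, then Lemma~\ref{nohoms}(e) applied to the $H_e$-component of $\phi|_D$ after noting that $D\cong G/G_e$ is totally disconnected and contractible, and finally a comparison of the two product decompositions in the surjective case. Your extra care in transporting the contractive automorphism from $G/G_e$ to $D$ just spells out what the paper leaves implicit.
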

\begin{proof}
The assertion $\phi(G_e)\sub H_e$ is trivial.
Write $\phi|_D=(\phi_e,\theta)$ with $\phi_e\colon D\to H_e$
and $\theta\colon D\to E$.
Since $D\cong G/G_e$ is totally disconnected and contractible,
we have $\phi_e=e$ by Lemma~\ref{nohoms}\,(e) and thus
$\phi(D)\sub E$. Notably, $\phi(G)=\phi(G_e)\times \phi(D)$,
from which the final assertion follows.
\end{proof}
\begin{prop}\label{prethmA}
Let $(G,\alpha)$ and $(H,\beta)$ be locally compact contraction groups
and $\phi\colon G\to H$ be a continuous group homomorphism.
Assume that 
\begin{equation}\label{dcomG}
G=G_e\times G_{p_1}\times\cdots\times G_{p_n}\times \tor(G)
\end{equation}
internally as a topological group,
for certain prime numbers $p_1<\cdots<p_n$
and non-trivial, $\alpha$-stable closed normal subgroups $G_p\sub G$
which are $p$-adic Lie groups, for $p\in\{p_1,\ldots,p_n\}$.
Moreover, assume that
\begin{equation}\label{decoh}
H=H_e\times H_{q_1}\times\cdots\times H_{q_m}\times \tor(H)
\end{equation}
internally as a topological group,
for certain prime numbers $q_1<\cdots<q_m$
and non-trivial, closed normal subgroups $H_p\sub H$
which are $p$-adic Lie groups, for all $p\in\{q_1,\ldots,q_m\}$.
For primes $p$ such that $p\not\in\{q_1,\ldots, q_m\}$,
set $H_p:=\{e\}$.
Then the following holds:
\begin{itemize}
\item[\rm(a)]
$\phi(G_e)\sub H_e$ and $\phi(\tor(G))\sub \tor(H)$;
\item[\rm(b)]
$\phi(G_p)\sub H_p$ for all $p\in\{p_1,\ldots, p_n\}$;
\item[\rm(c)]
If $\phi$ is injective, then $\{p_1,\ldots, p_n\}\sub\{q_1,\ldots, q_m\}$;
\item[\rm(d)]
If $\phi$ is surjective, then $\phi(G_e)=H_e$, $\phi(\tor(G))=\tor(H)$,
$\{p_1,\ldots,p_n\}\supseteq \{q_1,\ldots,q_m\}$ and $\phi(G_p)=H_p$
for all $p\in\{q_1,\ldots,q_m\}$.
\end{itemize}
\end{prop}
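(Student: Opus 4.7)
The plan is to analyze $\phi$ componentwise using the two topological direct product decompositions. Since each decomposition is internal \emph{as a topological group}, every projection onto a factor of~$H$ is a continuous group homomorphism, so I may examine $\phi(g)$ for $g\in G_p$ through its components in $H_e$, each $H_{q_j}$, and $\tor(H)$. Part~(a) is immediate: $\phi(G_e)$ is the continuous image of a connected space and so lies in~$H_e$, while any group homomorphism carries torsion to torsion, giving $\phi(\tor(G))\sub\tor(H)$.

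Part~(b) is the real work. Fix $p\in\{p_1,\ldots,p_n\}$ and $g\in G_p$, and write $\phi(g)=h_e\cdot h_{q_1}\cdots h_{q_m}\cdot h_{\tor}$ via the continuous projections of~$H$. First I would observe that each $H_{q_j}$ is torsion-free, since the internal direct product forces $H_{q_j}\cap\tor(H)=\{e\}$. I would then kill each unwanted component in turn. Since $H_e$ is automatically $\beta$-stable, $(H_e,\beta|_{H_e})$ is a connected locally compact contraction group; Lemma~\ref{padcase}(c) gives $g^{p^n}\to e$, hence $h_e^{p^n}\to e$, and Lemma~\ref{conncase}(d) applied with $k_n=p^n$ then forces $h_e=e$. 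For $q_j\neq p$, the element $h_{q_j}$ sits in the torsion-free $q_j$-adic Lie group $H_{q_j}$ and satisfies $h_{q_j}^{p^n}\to e$, so Lemma~\ref{torfreepad} gives $h_{q_j}=e$. Finally, Lemma~\ref{padcase}(b) says every element of $G_p$ is divisible, so $h_{\tor}$ is divisible in $(\tor(H),\beta|_{\tor(H)})$, which is a torsion locally compact contraction group by Lemma~\ref{torisgp}; Lemma~\ref{torsioncase} then yields $h_{\tor}=e$. Only the component with $q_j=p$ can survive, so $\phi(G_p)\sub H_p$ (where $H_p=\{e\}$ if $p\not\in\{q_1,\ldots,q_m\}$, by the convention of the proposition).

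For~(c), if some $p_i$ were missing from $\{q_1,\ldots,q_m\}$ then $H_{p_i}=\{e\}$, forcing $\phi|_{G_{p_i}}=e$ by~(b); this contradicts injectivity of~$\phi$ because $G_{p_i}\neq\{e\}$. For~(d), apply $\pi_{H_e}$ to $H=\phi(G)=\phi(G_e)\phi(G_{p_1})\cdots\phi(\tor(G))$: by~(a) and~(b) every factor on the right projects to $\{e\}$ except $\phi(G_e)$, so $H_e=\phi(G_e)$. The analogous argument with $\pi_{\tor(H)}$ yields $\phi(\tor(G))=\tor(H)$. For each~$j$, only the factors $\phi(G_{p_i})$ with $p_i=q_j$ have nontrivial $\pi_{H_{q_j}}$-projection; surjectivity of $\pi_{H_{q_j}}\circ\phi$ together with $H_{q_j}\neq\{e\}$ therefore forces $q_j\in\{p_1,\ldots,p_n\}$, and in that case $\phi(G_p)=H_{q_j}=H_p$ with $p=q_j$.

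The main obstacle is~(b). The subtlety is that the $H_{q_j}$ are not assumed $\beta$-stable and hence cannot be treated directly as contraction groups; the saving device is that torsion-freeness of $H_{q_j}$ falls out of the topological direct product with $\tor(H)$, which is exactly what allows Lemmas~\ref{torfreepad}, \ref{conncase}(d) and~\ref{torsioncase} to be applied once contractivity of $\alpha|_{G_p}$ supplies the sequences $g^{p^n}\to e$ and the divisibility from Lemma~\ref{padcase}.
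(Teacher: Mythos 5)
Your proof is correct and follows essentially the same route as the paper: analyse $\phi$ componentwise with respect to the two decompositions, kill the unwanted components, and compare images under surjectivity. The only difference is cosmetic — where the paper cites the homomorphism-vanishing Lemmas~\ref{nohoms} and~\ref{varpq} for part~(b), you inline the same element-wise arguments (via $g^{p^n}\to e$, Lemma~\ref{conncase}(d), Lemma~\ref{torfreepad}, and divisibility plus Lemma~\ref{torsioncase}), correctly handling the fact that the $H_{q_j}$ need not be $\beta$-stable by using only their torsion-freeness.
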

\begin{proof}
(a) is obvious. To prove~(b),
note that $H_e\times H_{q_1}\times\cdots\times H_{q_m}\cong H/\!\tor(H)$ is torsion-free. So $H_q$ is torsion-free for all $q \in\!\{q_1,\ldots,q_m\}$.
Write
\[
\phi|_{G_p}=(\phi_e,\phi_1,\ldots,\phi_m,\theta)
\]
in terms of its components $\phi_e\colon G_p\to H_e$,
$\phi_j\colon G_p\to H_{q_j}$ for $j\in\{1,\ldots, m\}$,
and $\theta\colon G_p\to \tor(H)$
with respect to the decomposition~(\ref{decoh}).
By Lemma~\ref{nohoms} (e) and~(b), we have $\phi_e=e$ and $\theta=e$.
Moreover, $\phi_j=e$ for all $j\in\{1,\ldots, m\}$
such that $p \not= q_j$, by Lemma~\ref{varpq}.
If $p\not\in\{q_1,\ldots, q_m\}$, we deduce that $\phi(G_p)=\{e\}=H_p$.
If $p=q_j$ for some $j\in\{1,\ldots,m\}$, we deduce that
$\phi(G_p)\sub H_p$.

(c) is immediate from~(b).

(d) For $j\in\{1,\ldots,m\}$,
let $S_j:=\phi_j(G_{p_i})\sub H_{q_j}$ if there exists $i\in\{1,\ldots,n\}$
such that $p_i=q_j$; otherwise, we set $S_j:=\{e\}\sub H_{q_j}$.
Then
\begin{equation}\label{decoim}
\phi(G)=\phi(G_e)\times S_1\times\cdots\times S_m\times\phi(\tor(G)),
\end{equation}
where $\phi(G_e)\sub H_e$ and $\phi(\tor(G))\sub \tor(H)$.
If~$\phi$ is onto, then a comparison of (\ref{decoh}) and (\ref{decoim})
yields the conclusion of~(d).
\end{proof}
{\bf Proof of Theorem~A.}
(a) By \cite[Proposition~4.2]{Sie}, $G$ has a totally disconnected, $\alpha$-stable, closed
normal subgroup~$D$ such that $G=G_e\times D$ internally as a topological group.
If also $E$ is a closed normal subgroup of~$G$ such that $G=G_e\times E$
internally as a topological group, applying Lemma~\ref{complementsDE} to $\id_G\colon G_e\times D\to G_e\times E$
we obtain $D=E$. Again by Lemma~\ref{complementsDE}, $D$ is topologically fully invariant.\vspace{1.2mm}

(b) Write $G=G_e\times G_{\td}$ as in~(a). By Lemma~\ref{torisgp} and its proof,
$\tor(G)=\tor(G_{\td})$ is a closed subgroup of~$G$.
By \cite[Theorem~B]{GW}, there are primes $p_1<\cdots<p_n$
and $\alpha$-stable closed normal non-trivial subgroups $G_p$ of~$G$
for $p\in\{p_1,\ldots,p_n\}$ such that~(\ref{dcomG}) holds.
Now assume that $q_1<\cdots<q_m$ are primes and
\[
G=G_e\times H_{q_1}\times\cdots\times H_{q_m}\times \tor(G)
\]
internally as a topological group for non-trivial $p$-adic Lie groups
$H_p$ for $p\in \{q_1,\ldots, q_m\}$
which are closed normal subgroups
of~$G$.
Applying Proposition~\ref{prethmA} to $\phi:=\id_G$, we find that $n=m$,
$p_i=q_i$ for all $i\in\{1,\ldots,n\}$ and $G_p=H_p$ for all
$p\in\{p_1,\ldots,p_n\}$.
By Proposition~\ref{prethmA}, each $G_p$ is topologically fully invariant in~$G$.\vspace{1.3mm}

(c) will be established later in this section, after Proposition~\ref{beforeA}.\\[2.3mm]
More can be said concerning the $p$-adic Lie groups $G_p$ in Theorem~A.
\begin{prop}\label{uniq-thm}
Let $(G,\alpha)$, $p_1<\ldots<p_n$ and $G_p$ for $p\in\{p_1,\ldots, p_n\}$
be as in Theorem~{\rm A}.
Write $G_p:=\{e\}$
for primes $p\not\in\{p_1,\ldots,p_n\}$.
Then
\begin{equation}\label{identifGp}
G_p=\left\{x\in G\colon \mbox{$x$ is divisible and}\;\lim_{n\to\infty}x^{p^n}=e\right\}
\end{equation}
for each prime number~$p$.
\end{prop}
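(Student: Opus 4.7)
The plan is to establish the two inclusions using the decomposition (\ref{dcomG}) provided by Theorem~A and the structural lemmas already available for each factor.

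For the inclusion $G_p \subseteq \{x\in G : x\text{ divisible and } x^{p^n}\to e\}$: if $p \notin \{p_1,\ldots,p_n\}$ then $G_p = \{e\}$ and there is nothing to check. If $p \in \{p_1,\ldots,p_n\}$, then $G_p$ is a contractive $p$-adic Lie group, so Lemma~\ref{padcase}(b) shows every $x \in G_p$ is (uniquely) divisible in $G_p$, hence divisible in $G$, and Lemma~\ref{padcase}(c) gives $x^{p^n}\to e$.

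For the reverse inclusion, let $x \in G$ be divisible with $x^{p^n}\to e$, and write
\[
x = x_e\cdot x_{p_1}\cdots x_{p_n}\cdot x_t
\]
according to the internal direct product. Each coordinate projection is a continuous homomorphism, so each component is divisible in its respective factor and satisfies $x_{\bullet}^{p^n}\to e$. I would then eliminate each component separately: for $x_e \in G_e$, Lemma~\ref{conncase}(d) applied with $k_n = p^n$ forces $x_e = e$ (otherwise $x_e^{p^n}$ would fail to converge at all). For $x_t \in \tor(G)$, the fact that $x_t$ is divisible in the torsion contraction group $\tor(G)$ together with Lemma~\ref{torsioncase} gives $x_t = e$. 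For each $p_i \neq p$, the subgroup $G_{p_i}$ is a torsion-free $p_i$-adic Lie group (Lemma~\ref{padcase}(b)), so Lemma~\ref{torfreepad} applied to the prime $p \neq p_i$ and the convergent sequence $x_{p_i}^{p^n}\to e$ forces $x_{p_i} = e$.

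Thus $x = x_p$ if $p \in \{p_1,\ldots,p_n\}$ (so $x \in G_p$), and $x = e$ otherwise (matching $G_p = \{e\}$), yielding (\ref{identifGp}).

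There is no real obstacle here: the work has already been done in assembling the type-by-type structural lemmas for connected, $p$-adic, and torsion contraction groups, and the argument is just a component-wise application. The only thing requiring slight care is checking that divisibility of $x$ in $G$ passes to divisibility of each component in its factor (which is immediate because the projections are surjective group homomorphisms onto the respective factors).
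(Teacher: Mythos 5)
Your proposal is correct and matches the paper's own argument: both use the internal decomposition from Theorem~A, note that the forward inclusion follows from Lemma~\ref{padcase}, and kill each component of an element of the right-hand side via Lemma~\ref{conncase}(d) (connected part), Lemma~\ref{torsioncase} (torsion part, using divisibility), and Lemma~\ref{torfreepad} (the $q$-adic factors with $q\neq p$). Nothing essential differs, so no further comment is needed.
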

\begin{proof}
Let $\pi_e\colon G\to G_e$,
$\theta\colon G\to \tor(G)$
and
$\pi_p\colon G\to G_p$ (for $p\in\{p_1,\ldots, p_n\}$)
be the projections onto the components
of
\[
G=G_e\times G_{p_1}\times\cdots\times G_{p_n}\times \tor(G).
\]
For each prime number~$p$, the left-hand-side of (\ref{identifGp})
is a subset of the right-hand-side, by Lemma~\ref{padcase}\,(c).
To see the converse inclusion, let $x$ be an element of the right-hand-side.
Then $\theta(x)$ is a divisible element of $\tor(G)$ and hence $\theta(x)=e$,
by Lemma~\ref{torsioncase}.
Moreover, $\pi_e(x)=e$ by Lemma~\ref{conncase}\,(d).
Finally, $\pi_q(x)=e$ for each $q\in\{p_1,\ldots, p_n\}$ such that $q\not=p$,
by Lemma~\ref{torfreepad}. 
If $p\not\in\{p_1,\ldots,p_n\}$, we deduce that $x=e$, whence $x\in G_p$.
If $p\in\{p_1,\ldots,p_n\}$, we deduce that $x=\pi_p(x)\in G_p$.\vspace{4mm}
\end{proof}
{\bf Locally pro-nilpotent contraction groups}
\begin{prop}\label{all-about}
Let $(G,\alpha)$ be a locally compact contraction group
such that
$G_{\td}$ is locally pro-nilpotent.
Then $G$ has a largest $\alpha$-stable closed normal subgroup~$G_{(p)}$
which is locally pro-$p$,
for each prime number~$p$.
Moreover, $G_{(p)}\not=\{e\}$
only for finitely many primes, say for $p$ in the set $\{p_1,\ldots,p_m\}$
with $p_1<\cdots < p_m$,
and
\begin{equation}\label{decomax}
G=G_e\times G_{(p_1)}\times\cdots\times G_{(p_m)}
\end{equation}
internally as a topological group.
\end{prop}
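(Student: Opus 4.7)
The plan is to reduce the statement to Theorem~A(c) applied to the torsion part. First I would invoke Theorem~A(a),(b) to split
\[
G=G_e\times G_{p_1}\times\cdots\times G_{p_n}\times\tor(G)
\]
internally as a topological group, with each $G_{p_i}$ a $p_i$-adic Lie group, hence locally pro-$p_i$. I then need to pass the pro-nilpotence hypothesis from $G_{\td}$ down to $\tor(G)$: if $U\sub G_{\td}$ is a compact open pro-nilpotent subgroup, then $U\cap\tor(G)$ is a compact open subgroup of $\tor(G)$, and as a closed subgroup of the profinite nilpotent group $U$ its finite quotients all embed into the finite nilpotent quotients of $U$ and hence are nilpotent; thus $U\cap\tor(G)$ is pro-nilpotent, making $\tor(G)$ locally pro-nilpotent.

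Theorem~A(c) then supplies primes $q_1<\cdots<q_k$ and an internal topological direct product
\[
\tor(G)=\tor_{q_1}(G)\times\cdots\times\tor_{q_k}(G),
\]
with each factor fully invariant in $G$ and locally pro-$q_j$. For a prime $p$ I would define
\[
G_{(p)}:=G_p\times\tor_p(G),
\]
using the conventions $G_p:=\{e\}$ for $p\notin\{p_1,\ldots,p_n\}$ and $\tor_p(G):=\{e\}$ for $p\notin\{q_1,\ldots,q_k\}$. Refining Theorem~A to
\[
G=G_e\times G_{p_1}\times\cdots\times G_{p_n}\times\tor_{q_1}(G)\times\cdots\times\tor_{q_k}(G),
\]
we see that $G_{(p)}$ is the product of two commuting direct factors, hence closed, normal, $\alpha$-stable, and locally pro-$p$; only finitely many are nontrivial, and regrouping the factors by prime delivers the asserted decomposition.

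For the maximality clause, let $N$ be an $\alpha$-stable closed normal subgroup of $G$ that is locally pro-$p$. Then $N$ is totally disconnected, so its projection to $G_e$ is trivial by Lemma~\ref{nohoms}(e), putting $N\sub G_{\td}$. For each prime $q\ne p$, Lemma~\ref{nohoms}(d) forces the projection $N\to G_q$ (when $q\in\{p_1,\ldots,p_n\}$, where $G_q$ is locally pro-$q$) and the projection $N\to\tor_q(G)$ (when $q\in\{q_1,\ldots,q_k\}$) to vanish. Hence $N\sub G_p\times\tor_p(G)=G_{(p)}$, proving maximality.

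The main obstacle is organisational: one must ensure that Theorem~A(c) is at hand at this point. If the order in the paper is reversed, essentially the same argument still works by first constructing an $\alpha$-stable compact open pro-nilpotent subgroup $U\sub\tor(G)$ (shrink any pro-nilpotent compact open by a large power of $\alpha$, using contractivity), decomposing $U=\prod_q U_q$ into its Sylow pro-$q$ factors (characteristic, hence $\alpha$-stable), and defining $\tor_q(G):=\bigcup_{n\geq 0}\alpha^{-n}(U_q)$; one then checks that each such set coincides with the $q$-torsion subgroup and that these assemble to the stated direct-product decomposition of $\tor(G)$, after which the argument above concludes.
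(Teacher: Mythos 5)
Your primary route is not admissible as the paper stands: Theorem~A(c) is proved \emph{from} Proposition~\ref{all-about} (together with Proposition~\ref{beforeA}), so quoting it here is circular, and this is more than the ``organisational'' issue you flag. Once you switch to your fallback, the detour through Theorem~A(b) and the definition $G_{(p)}=G_p\times\tor_p(G)$ become superfluous, because the paper runs exactly that Sylow-plus-ascending-union construction directly on a compact open pro-nilpotent subgroup $U\subseteq G_{\td}$ (not of $\tor(G)$), which automatically merges the $p$-adic Lie part and the $p$-torsion part into a single subgroup $G_{(p)}=\bigcup_{n\in\N_0}\alpha^{-n}(S_p)$. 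So your fallback is not an alternative argument; it is the paper's proof transplanted to $\tor(G)$, and the real content is hidden in your ``one then checks''.

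Concretely, the compressed steps are where the work lies. First, the Sylow factors are not ``characteristic, hence $\alpha$-stable'': $\alpha$ restricts only to an injective endomorphism of $U$ with $\alpha(U)\subseteq U$, not to an automorphism of $U$; one argues instead that $\alpha(S_p)$ is a pro-$p$ subgroup of the pro-nilpotent group $U$, hence contained in its unique $p$-Sylow subgroup $S_p$ (the paper does this via \cite[Proposition~2.2.2]{Wil}). Second, closedness of $\bigcup_{n\in\N_0}\alpha^{-n}(S_p)$ is not automatic and is taken from \cite[Proposition~3.1]{GW}; normality in $G$ also needs the ascending-union argument. Third, finiteness of the set of relevant primes comes from openness of $\alpha(U)$ in $U$ together with the fact that a nontrivial compact (or discrete) contraction group cannot exist. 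Fourth, that the product map $G_{(p_1)}\times\cdots\times G_{(p_m)}\to G_{\td}$ is an isomorphism of topological groups requires the discrete-kernel argument and the automatic openness of bijective morphisms (\ref{nicemorph}). Your maximality argument via Lemma~\ref{nohoms}\,(d),(e) is correct and matches the paper's, and the observation that a closed subgroup $H$ of a pro-nilpotent $U$ is pro-nilpotent is true, though the justification should run through $H/(H\cap N)\hookrightarrow U/N$ for open normal $N\lhd U$ rather than ``finite quotients embed''. As written, the proposal is a plan whose essential verifications remain to be supplied.
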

\begin{proof}
Let $U\sub G_{\td}$ be a compact open subgroup which is pro-nilpotent.
Let $V\sub G$ be a compact open subgroup such that $\alpha(V)\sub V$ (see~\ref{inv-sub}).
Then $\alpha^n(V)\sub U$ for some $n\in\N$, by~\ref{compacontra}.
Thus $\alpha^n(V)$ is pro-nilpotent, and hence so is~$V$.
After replacing~$U$ with~$V$, we may assume that~$U$
is pro-nilpotent and $\alpha(U) \sub U$.
Being pro-nilpotent, $U$ has a unique $p$-Sylow subgroup~$S_p$
for each prime~$p$ (see \cite[Propositions 2.4.3\,(ii) and 2.2.2\,(d)]{Wil}), and
\begin{equation}\label{lngg}
U=\prod_{p\in\bP} S_p\vspace{-1.3mm}
\end{equation}
(see \cite[Proposition~2.4.3\,(iii)]{Wil}).
As~$\alpha$ is an automorphism, $\alpha(S_p)$ is the unique Sylow $p$-subgroup of~$\alpha(U)$.
Now $\alpha(S_p)$ is the intersection of a $p$-Sylow subgroup of~$U$ with $\alpha(U)\sub U$,
as follows from \cite[Proposition~2.2.2\,(c)]{Wil}. So
\begin{equation}\label{identsyl}
\alpha(S_p)=S_p\cap\alpha(U)\sub S_p.
\end{equation}
By \cite[Proposition~3.1]{GW}, $G_{(p)}:=\bigcup_{n\in\N_0}\alpha^{-n}(S_p)$
is a closed subgroup of~$G_{\td}$, and~$S_p$ is open in~$G_{(p)}$.
Now $\alpha^{-n}(S_p)$ is normal in
$\alpha^{-n}(U)$, for each $n\in\N_0$,
since $S_p$ is normal in~$U$. As a consequence,
the ascending union $G_{(p)}=\bigcup_{n\in\N_0}\alpha^{-n}(S_p)$
is normal in the ascending union $G_{\td}=\bigcup_{n\in\N_0}\alpha^{-n}(U)$
and hence in $G=G_e\times G_{\td}$.
By~(\ref{lngg}) and~(\ref{identsyl}), we have
\[
\alpha(U)=\prod_{p\in\bP}\alpha(S_p)=\prod_{p\in \bP}(\alpha(U)\cap S_p).\vspace{-1mm}
\]
As $\alpha(U)$ is open in~$U$, for all but finitely many~$p$
we must have $S_p=\alpha(U)\cap S_p$ (which is compact) and thus $S_p=\{e\}$
(as every compact contraction group is trivial). Let $p_1<\cdots < p_m$
be the primes~$p$ for which $S_p\not=\{e\}$.
Then
\[
G_{(p_1)}\cdots G_{(p_m)}=\bigcup_{n\in\N_0}\alpha^{-n}(U)=G_{\td},
\]
showing that the product map
\[
\pi\colon G_{(p_1)}\times\cdots\times G_{(p_m)}\to G_{\td},\;\; (g_1,\ldots, g_m)\mto g_1g_2\cdots g_m
\]
is surjective.
Now $S_{p_j}$
centralizes~$S_{p_k}$
for all $j\not=k$ in $\{1,\ldots,m\}$, whence the ascending union $G_{(p_j)}=
\bigcup_{n\in\N_0}\alpha^{-n}(S_j)$
centralizes the ascending union $G_{(p_k)}=\bigcup_{n\in\N_0}\alpha^{-n}(S_{p_k})$.
As a consequence, $\pi$ is a group homomorphism
and hence a morphism of contraction groups if we use
\[
\alpha|_{G_{(p_1})}\times\cdots\times
\alpha|_{G_{(p_m})}
\]
on the direct product.
Thus $\ker(\pi)$ is a contraction group.
As $\ker(\pi)\cap (S_{p_1}\times\cdots\times S_{p_m})=\{(e,\ldots,e)\}$,
the contraction group $\ker(\pi)$ is discrete and thus $\ker(\pi)=\{(e,\ldots,e)\}$.
Thus~$\pi$ is bijective and hence an isomorphism of contraction groups, by~\ref{nicemorph}.
As a consequence, also~(\ref{decomax}) holds.
For $k\in\{1,\ldots, m\}$, let $\pr_k\colon G\to G_{(p_k)}$ be the projection onto the factor $G_{(p_k)}$
in the decomposition~(\ref{decomax}). Moreover, let $\pr_e\colon G\to G_e$ be
projection onto the factor~$G_e$ in~(\ref{decomax}).
If $p$ is a prime and $N$ an $\alpha$-stable closed normal subgroup of~$G$
which is locally pro-$p$, then $\pr_k(N)=\{e\}$ for all
$k\in\{1,\ldots,m\}$ such that $p_k\not=p$;
moreover, $\pr_e(N)=\{e\}$ (see Lemma~\ref{nohoms} (d) and (e)).
If $p\not\in\{p_1,\ldots, p_m\}$,
this entails that $N=\{e\}$, whence $\{e\}$ is the largest $\alpha$-stable closed
normal subgroup of~$G$ which is locally pro-$p$.
If $p=p_k$ for some $k\in\{1,\ldots, m\}$,
the preceding shows that $N\sub G_{(p_k)}$,
whence $G_{(p_k)}$ is the largest $\alpha$-stable closed normal subgroup
of $G$ which is locally pro-$p$.
\end{proof}
\begin{rem}
We mention that,
in Proposition~\ref{all-about},
$\{p_1,\ldots, p_m\}$ is the set of all
prime divisors of $\Delta(\alpha^{-1}|_{G_{\td}})$.
Since
\[
\Delta(\alpha^{-1}|_{G_{\td}})=\Delta(\alpha^{-1}|_{G_{(p_1)}})\cdots
\Delta(\alpha^{-1}|_{G_{(p_m)}}),
\]
this assertion will follow from Lemma~\ref{mod-pro-p}.
\end{rem}
\begin{prop}\label{beforeA}
Let $(G,\alpha)$ be a totally disconnected locally compact
contraction group which is locally pro-nilpotent and a torsion group.
Let $p$ be a prime. Then
$\tor_p(G)$ coincides with $G_{(p)}$, the largest closed normal subgroup
of~$G$ which is $\alpha$-stable and locally pro-$p$.
\end{prop}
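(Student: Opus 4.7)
The plan is to establish the two inclusions $G_{(p)}\sub \tor_p(G)$ and $\tor_p(G)\sub G_{(p)}$ separately, using Proposition~\ref{all-about} to split $G$ and then Lemma~\ref{unionprop} on each factor.

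For the inclusion $G_{(p)}\sub\tor_p(G)$, I would take $g\in G_{(p)}$. Since $G$ is a torsion group, $\langle g\rangle$ is a finite subgroup of $G_{(p)}$. By Lemma~\ref{unionprop} applied to the locally pro-$p$ group $G_{(p)}$, every compact subgroup is a pro-$p$-group; a finite pro-$p$-group is a $p$-group, so the order of $g$ is a power of~$p$, giving $g\in\tor_p(G)$.

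For the reverse inclusion, I would apply Proposition~\ref{all-about} to $(G,\alpha)$: since $G$ is totally disconnected we have $G_e=\{e\}$, so there exist primes $p_1<\cdots<p_m$ with
\[
G=G_{(p_1)}\times\cdots\times G_{(p_m)}
\]
internally as a topological group, where each $G_{(p_k)}$ is locally pro-$p_k$. Let $\pi_k\colon G\to G_{(p_k)}$ be the projection and take $x\in\tor_p(G)$, writing $x=(x_1,\ldots,x_m)$ with $x_k=\pi_k(x)$. By~\ref{fuinva}, $x_k\in\tor_p(G_{(p_k)})$, so $x_k$ has order dividing some $p^n$. On the other hand, $\langle x_k\rangle$ is a finite, hence compact, subgroup of the locally pro-$p_k$ group $G_{(p_k)}$, so by Lemma~\ref{unionprop} it is a $p_k$-group, and the order of $x_k$ is a power of $p_k$. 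If $p_k\neq p$, these two divisibility conditions force $x_k=e$. Hence $x$ equals its component in $G_{(p_j)}$ if $p=p_j$ for some $j$, and $x=e$ otherwise; either way $x\in G_{(p)}$ (recalling that $G_{(p)}=\{e\}$ when $p\notin\{p_1,\ldots,p_m\}$).

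There is no real obstacle here: the argument is essentially a book-keeping exercise once the decomposition of Proposition~\ref{all-about} is in hand and once one observes that Lemma~\ref{unionprop} controls the orders of finite subgroups in each factor. The only point requiring a moment's care is that $G_e$ drops out (because $G$ is totally disconnected), which is what allows the decomposition to reduce the problem to the per-prime factors $G_{(p_k)}$.
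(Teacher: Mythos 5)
Your proof is correct and follows essentially the same route as the paper: both rest on the decomposition $G=G_{(p_1)}\times\cdots\times G_{(p_m)}$ from Proposition~\ref{all-about} together with the fact that torsion elements of a locally pro-$q$ group have $q$-power order (Lemma~\ref{unionprop}), the paper merely phrasing the inclusion $G_{(p)}\sub\tor_p(G)$ via the Sylow subgroups $S_p$ from the proof of Proposition~\ref{all-about} and leaving the component-wise order argument implicit.
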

\begin{proof}
Let $p_1<\cdots<p_m$ and $S_p$ for $p\in\{p_1,\ldots,p_m\}$ be
as in Proposition~\ref{all-about} and its proof.
Then~$S_p$ is a pro-$p$-group and a torsion
group, whence $S_p\sub\tor_p(G)$ and hence also
\[
G_{(p)}=\bigcup_{n\in\N_0}\alpha^{-n}(S_p)\sub\tor_p(G).\vspace{-1mm}
\]
Since $G=G_{(p_1)}\times\cdots\times G_{(p_m)}$,
we deduce that $\tor_p(G)=\{e\}=G_{(p)}$ for all primes $p\not\in\{p_1,\ldots, p_m\}$.
Moreover, $\tor_p(G)=G_{(p)}$ if $p\in\{p_1,\ldots, p_m\}$.\vspace{4mm}
\end{proof}
{\bf Proof of Theorem~A, completed.}\\[2.3mm]
To establish~(c), recall from Lemma~\ref{torisgp}
that $H:=\tor(G)$ is a totally disconnected
$\alpha$-stable closed subgroup of~$G$. As~$H$ is locally pro-nilpotent
by hypothesis, Propositions \ref{all-about} and~\ref{beforeA}
show that $\tor_p(H)=\tor_p(G)$ is non-trivial
only for $p$ in a finite set of primes $q_1<\cdots< q_m$.
Moreover, $\tor_p(H)=H_{(p)}$ is locally pro-$p$ for each prime~$p$
and $\tor(H)=\tor_{q_1}(H)\times\cdots\times\tor_{q_m}(H)$.
Recalling \ref{fuinva}, all assertions are established.\vspace{2mm}\,\Punkt
\begin{rem}
We mention that the hypothesis in Theorem~A(c) is also\linebreak
necessary for its conclusion.\\[2.3mm]
To see this,
let $(G,\alpha)$ be a totally disconnected, locally compact contraction group
and $q_1<\cdots<q_m$ be prime numbers such that
$\tor_p(G)$ is a closed subgroup of~$G$ for
$p\in\{q_1,\ldots,q_m\}$ and
\[
G=\tor_{q_1}(G)\times\cdots\times \tor_{q_m}(G)
\]
internally as a topological group.
Then $\tor_p(G)$ is locally pro-$p$ for all $p\in\{q_1,\ldots,q_m\}$
(as each compact open subgroup $U_p$ of $\tor_p(G)$ is both pro-finite and
a $p$-group and hence a pro-$p$-group). Since $U_{q_1}\times\cdots\times U_{q_m}$
is a compact open subgroup of~$G$ and pro-nilpotent, we see that~$G$ is locally pro-nilpotent.
\end{rem}
\section{Existence of equivariant global sections}\label{secsec}
In this section, we prove Theorem~C.
If $(G,\alpha)$, $(H,\beta)$, and~$q$ are as in the theorem,
we have decompositions $G=G_e\times G_{\td}$
and $H=H_e\times H_{\td}$ as in Theorem~A.
Since $q$ is a group homomorphism such that
$q(G_e)=H_e$ and $q(G_{\td})=H_{\td}$ (see Lemma~\ref{complementsDE}),
we have
\[
q=q|_{G_e}\times q|_{G_{\td}}\colon G_e\times G_{\td}\to H_e\times H_{\td}.
\]
If we can find a continuous section $\sigma_1\colon H_e\to G_e$
for the surjective morphism
$q|_{G_e}\colon G_e\to H_e$ of contraction groups and a continuous
section $\sigma_2\colon H_{\td}\to G_{\td}$ for the surjective
morphism $q|_{G_{\td}}\colon G_{\td}\to H_{\td}$
such that $\alpha|_{G_e}\circ \sigma_1=\sigma_1\circ \beta|_{H_e}$
and $\alpha|_{G_{\td}}\circ\sigma_2=\sigma_2\circ \beta|_{H_{\td}}$,
then $\sigma:=\sigma_1\times\sigma_2$ will be a continuous section
for~$q$ such that $\alpha\circ\sigma=\sigma\circ\beta$.
Moreover, $\sigma(e)=\sigma(\beta^n(e))=\alpha^n(\sigma(e))$
for each $n\in\N$ and hence $\sigma(e)=\lim_{n\to\infty}\alpha^n(\sigma(e))=e$.
It therefore suffices to prove Theorem~C in two cases: a) both $G$ and $H$ are totally disconnected;
b) both $G$ and $H$ are connected. We start with case a);
case b) will be settled afterwards, in Lemma~\ref{sectionsconn}.\\[2.3mm]
{\bf Proof of Theorem~C when {\boldmath $G$ and $H$} are totally disconnected}\\[1mm]
Let $V\sub G$ be a compact open subgroup such that $\alpha(V)\sub V$ (see \ref{inv-sub}).
Then $\{\alpha^k(V)\colon k\in\N_0\}$
is a basis of identity neighbourhoods in~$G$ (see \ref{compacontra}).
By~\ref{nicemorph},
$q$~is an open map. Hence $U:=q(V)$ is a compact open subgroup of~$H$.
Since $\beta\circ q=q\circ \alpha$, we have $\beta(U)\sub U$.
By~\ref{compacontra}, $\{\beta^k(U)\colon k\in\N_0\}$ is a basis of identity neighbourhoods in~$H$.
Since $\beta(U)$ is open and~$U$ is compact,
the index $\ell:=[U:\beta(U)]$ is finite. We pick representatives $h_1,\ldots, h_\ell$
for the left cosets $h\beta(U)\in U/\beta(U)$ of $\beta(U)$ in~$U$, such that $h_1=e$.
We find $g_1,\ldots,g_\ell\in V$ such that $q(g_j)=h_j$ for all $j\in\{1,\ldots,\ell\}$, and $g_1=e$.
Let $m\in\Z$. For $h\in\beta^m(U)$,
we show by induction on $n\in \{m,m+1,\ldots\}$ that there are unique
$j_m,j_{m+1},\ldots,j_n\in\{1,\ldots,\ell\}$ such that
\begin{equation}\label{getreps}
h\beta^{k+1}(U)=\beta^m(h_{j_m})\cdots \beta^k(h_{j_k})\beta^{k+1}(U)
\end{equation}
for all
$k\in\{m,\ldots, n\}$.
If $n=m$, there is a unique $j_m\in \{1,\ldots, \ell\}$
with $h\beta^{m+1}(U)=\beta^m(h_{j_m})\beta^{m+1}(U)$
as $\beta^m(h_1),\ldots, \beta^m(h_\ell)$ are representatives for the left cosets
of $\beta^{m+1}(U)$ in $\beta^m(U)$. If $n>m$ and unique
$j_m,\ldots,j_{n-1}$ have been found such that (\ref{getreps})
holds for all $k\in\{m,\ldots, n-1\}$, then
\[
h=\beta^m(h_{j_m})\cdots \beta^{n-1}(h_{j_{n-1}})z
\]
for a unique $z\in \beta^n(U)$ and $z\beta^{n+1}(U)=\beta^n(h_{j_n})\beta^{n+1}(U)$ for
a unique $j_n\in\{1,\ldots,\ell\}$.
Then
\begin{equation}\label{givesuni}
h\beta^{n+1}(U)=\beta^m(h_{j_m})\cdots \beta^n(h_{j_n})\beta^{n+1}(U)
\end{equation}
and $j_n$ is uniquely determined by this property. In fact, if (\ref{givesuni})
also holds with $i\in\{1,\ldots,\ell\}$ in place of $j_n$, then we can multiply both identities with
$\beta^{n-1}(h_{j_{n-1}})^{-1}\cdots \beta^m(h_{j_m})^{-1}$ on the left. Comparing the right hand sides,
\[
\beta^n(h_{j_n})\beta^{n+1}(U)=\beta^n(h_i)\beta^{n+1}(U)
\]
follows and hence $i=j_n$.

For $n\in\{m,m+1,\ldots\}$, let $f_n(h):=\alpha^n(g_{j_n})$.
Then $f_n(h')=f_n(h)$ for all $h'\in\beta^m(U)$
such that $h'\in h \beta^{n+1}(U)$, showing that the function
\[
f_n\colon \beta^m(U)\to \alpha^n(V)
\]
is locally constant and thus continuous.
Define $s_n(h):=\alpha^m(g_{j_m})\cdots \alpha^n(g_{j_n})$ for $n\in\{m,m+1,\ldots\}$.
By Lemma~\ref{egsum}, the limit
\[
\sigma(h):=\lim_{n\to\infty}s_n(h)
\]
exists for all $h\in \beta^m(U)$, and the function $\beta^m(U)\to G$,
$h\mto\sigma(h)$ is continuous. Given $h\in H$, there is $m\in\Z$ such that $h\in \beta^m(U)$ and 
our definition of $\sigma(h)$ does not depend on the choice of $m$. We therefore obtain a well-defined
function $\sigma\colon H\to G$, $h\mto \sigma(h)$ which is continuous
as each of the subsets $\beta^m(U)$ is open in~$H$ and $H=\bigcup_{m\in\Z}\beta^m(U)$.
Since
\[
q(s_n(h))=\beta^m(q(g_{j_m}))\cdots  \beta^n(q(g_{j_n}))
=\beta^m(h_{j_m})\cdots\beta^n(h_{j_n})\in h\beta^{n+1}(U)
\]
for $m\in\Z$, $h\in \beta^m(U)$ and $n\in\{m,m+1,\ldots\}$,
we see that
\[
q(\sigma(h))=\lim_{n\to\infty}q(s_n(h))=h.
\]
If $h\in\beta^m(U)$, then $\beta(h)\in\beta^{m+1}(U)$ holds
and the indices $k_i$ determined by $\beta(h)$ are $k_i=j_{i-1}$ in terms
of those determined as above by~$h$, for all\linebreak
$i\in\{m+1,m+2,\ldots\}$.
Hence
\begin{eqnarray*}
\sigma(\beta(h))
&=&\lim_{n\to\infty}\alpha^{m+1}(g_{k_{m+1}})\cdots \alpha^n(g_{k_n})\\
&=&\alpha\left(\lim_{n\to\infty}\alpha^m(g_{j_m})\cdots \alpha^{n-1}(g_{j_{n-1}})\right)
\,=\, \alpha(\sigma(h))
\end{eqnarray*}
and thus $\sigma\circ\beta=\alpha\circ\sigma$.\,\Punkt
\begin{la}\label{insidenil}
Let $\K=\R$ or $\K$ be a field $\Q_p$ of $p$-adic numbers.
Let $\cg$ and $\ch$ be finite-dimensional nilpotent Lie algebras
over~$\K$; write $*$ both for the Baker-Campbell-Hausdorff
multiplication $\cg\times\cg\to\cg$ on~$\cg$
and the one on~$\ch$.
If $\phi\colon (\cg,*)\to(\ch,*)$ is a continuous group
homomorphism, then $\phi\colon \cg\to\ch$ is a homomorphism
of Lie algebras. Thus $\ker(\phi)$ is an ideal of the Lie algebra
$\cg$ and $\phi(\cg)$ is a Lie subalgebra of~$\ch$.
Notably, $\phi(\cg)$ is closed in~$\ch$ and the corestriction
$\phi|^{\phi(\cg)}\colon\cg\to\phi(\cg)$ is an open map.
\end{la}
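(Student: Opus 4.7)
The plan is to use the Lie-theoretic framework recalled in Section~\ref{secprel} to identify $\phi$ with its tangent map at the identity $0$ of $(\cg,*)$. Since $\cg$ and $\ch$ are finite-dimensional nilpotent Lie algebras over $\K$, the BCH series converges globally and defines $\K$-analytic Lie group structures on $\cg$ and $\ch$ with neutral element $0$ (by the explanations following Proposition~12 in \cite[Chapter~III, \S9.5]{Bou}, already invoked in the proof of Lemma~\ref{conncase}). Hence the continuous group homomorphism $\phi\colon (\cg,*)\to (\ch,*)$ is automatically $\K$-analytic, and the tangent map $L(\phi)=T_0(\phi)\colon L(\cg,*)\to L(\ch,*)$ is a Lie algebra homomorphism.

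The key observation is that for the BCH group $(\cg,*)$, the canonical identification $T_0\cg=\cg$ identifies $L(\cg,*)$ with $\cg$ equipped with its original Lie bracket, and under this identification $\exp_{(\cg,*)}\colon L(\cg,*)\to (\cg,*)$ is the identity map on $\cg$ (this is the substance of Proposition~13 in \cite[Chapter~III, \S9.5]{Bou}, used already for Lemma~\ref{conncase}); the same holds for $\ch$. Naturality of the exponential map yields
\[
\phi\circ \exp_{(\cg,*)}=\exp_{(\ch,*)}\circ L(\phi)
\]
(Proposition~10 in \cite[Chapter~III, \S6.4]{Bou}), which because both exponentials are identities collapses to $\phi=L(\phi)$. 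Thus $\phi$ itself is a Lie algebra homomorphism from $\cg$ to $\ch$.

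The remaining claims are immediate consequences. The kernel $\ker(\phi)$ is an ideal of $\cg$ and $\phi(\cg)$ is a Lie subalgebra of $\ch$ by the homomorphism property. As a $\K$-linear subspace of the finite-dimensional $\K$-vector space $\ch$, the subalgebra $\phi(\cg)$ is closed in the natural topology. Finally, $\phi|^{\phi(\cg)}\colon\cg\to\phi(\cg)$ is a surjective continuous $\K$-linear map between finite-dimensional topological $\K$-vector spaces (factoring through the linear isomorphism $\cg/\ker(\phi)\to\phi(\cg)$), and every such map is open.

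I do not expect a substantive obstacle: the entire content of the lemma is the observation that, because the exponential map of a BCH group is the identity, the tangent functor does nothing, and so any continuous group homomorphism between two BCH groups must agree with its tangent map at the neutral element --- which by general Lie theory is automatically a homomorphism of Lie algebras.
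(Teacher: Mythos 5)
Your argument is fine for $\K=\R$, and there it is close in spirit to the paper's proof: both reduce the lemma to the single identity $\phi=L(\phi)$, after which everything else is linear algebra. The gap is in the $p$-adic case. The global naturality statement you invoke (Proposition~10 in \cite[Chapter~III, \S6.4]{Bou}) belongs to the real/complex theory, where $\exp$ is defined on all of the Lie algebra; for Lie groups over $\Q_p$ the exponential map and its naturality are in general only available on a neighbourhood of~$0$ --- note that the paper itself, in the proof of Lemma~\ref{padcase}, only gets $\exp_G\circ L(\alpha)|_W=\alpha\circ\exp_G|_W$ on a compact open subgroup and then needs Wang's Proposition~2.2 to globalize. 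So in the ultrametric case your argument yields $\phi=L(\phi)$ only on some identity neighbourhood $W$ of $(\cg,*)$. Unlike a connected real group, a $p$-adic group is not generated by an identity neighbourhood: the set where the two continuous homomorphisms $\phi$ and $L(\phi)$ of $(\cg,*)$ agree is an open subgroup, but open subgroups can be proper (think of $\Z_p\subseteq\Q_p$), so agreement near $0$ does not globalize for free.

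The gap is easy to close, and the repair is essentially the paper's actual proof. Given $x\in\cg$, choose $n$ with $p^nx\in W$; since $x^{p^n}=p^nx$ in both BCH groups (see~(\ref{powermult})), one gets $p^n\phi(x)=\phi(p^nx)=L(\phi)(p^nx)=p^nL(\phi)(x)$, and unique divisibility of $(\ch,+)$ gives $\phi(x)=L(\phi)(x)$ for all $x$. The paper bypasses the exponential map entirely: from $x^n=nx$, unique divisibility and density of $\Q$ in $\K$ it deduces $\phi(zx)=z\phi(x)$ for all $z\in\K$, and then reads off $L(\phi)(x)=\phi'(0)(x)=\frac{d}{dz}|_{z=0}\,\phi(zx)=\phi(x)$, so that $\phi=L(\phi)$ is a Lie algebra homomorphism; your closing paragraph (kernel an ideal, image a subalgebra, hence closed, and openness of the corestriction as a surjective linear map between finite-dimensional spaces) is exactly what the paper leaves as ``immediate''.
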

\begin{proof}
For each $x\in \cg$ and $n\in\N$, we have $\phi(nx)=\phi(x^n)=\phi(x)^n=n\phi(x)$,
using~(\ref{powermult}). Hence $\phi(zx)=z\phi(x)$ for all rational numbers
$z$ (by unique divisibility) and thus $\phi(zx)=z\phi(x)$
for all $z\in\K$, by continuity of~$\phi$ and density of~$\Q$ in~$\K$.
As a consequence,
\[
L(\phi)(x)=\phi'(0)(x)=\frac{d}{dz}\Big|_{z=0}\phi(zx)=\phi(x).
\]
Thus $\phi=L(\phi)$ is a Lie algebra homomorphism. All other assertions are now immediate.
\end{proof}
We refer to $C^\infty$-maps between
real $C^\infty$-manifolds as \emph{smooth} maps,
as usual. If $f\colon M\to N$ is a mapping between such manifolds
(of dimensions $m$ and~$n$, respectively),
we say that~$f$ is \emph{differentiable} at a point $x\in M$
if $\psi\circ f\circ \phi^{-1}$ is differentiable at~$\phi(x)$
for each $C^\infty$-diffeomorphism~$\phi$ from an open neighbourhood of~$x$ in~$M$
onto an open subset of~$\R^m$ and each $C^\infty$-diffeomorphism~$\psi$
from an open neighbourhood of $f(x)$ in~$N$ onto an open subset of~$\R^n$.
By the Chain Rule, it suffices to check differentiability
at $\phi(x)$ for one pair $(\phi,\psi)$.\\[2.3mm]
If $(E,\|.\|)$ is a normed space over~$\R$, then we write for
$x\in E$ and $r>0$
\[
B^E_r(x):=\{y\in E\colon \|y-x\|<r\}\;\,\mbox{and}\;\,
\wb{B}^E_r(x):=\{y\in E\colon \|y-x\|\leq r\}.
\]
\begin{la}\label{sectionsconn}
Let $(G,\alpha)$ and $(H,\beta)$
be connected locally compact contraction groups
and $q \colon G\to H$ be a surjective morphism of contraction groups.
Then there exists a continuous map $\sigma\colon H\to G$
which is smooth on $H\setminus\{e\}$
such that $q \circ \sigma=\id_H$ and $\alpha\circ\sigma=\sigma\circ \beta$.
\end{la}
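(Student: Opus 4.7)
\textbf{Proof plan for Lemma \ref{sectionsconn}.}
My first step is to reduce to a purely linear problem. By Lemma \ref{conncase} I identify $(G,\alpha)$ with $((\cg,*),L(\alpha))$ and $(H,\beta)$ with $((\ch,*),L(\beta))$, where $\cg:=L(G)$, $\ch:=L(H)$ are finite-dimensional nilpotent real Lie algebras under the BCH product and $L(\alpha)$, $L(\beta)$ are linear and contractive. Lemma \ref{insidenil} then forces the continuous surjection $q\colon(\cg,*)\to(\ch,*)$ to be an $\R$-linear Lie algebra homomorphism. Writing henceforth $\alpha:=L(\alpha)$ and $\beta:=L(\beta)$, I am reduced to: given a surjective $\R$-linear map $q\colon\cg\to\ch$ between finite-dimensional real vector spaces and linear contractive automorphisms $\alpha,\beta$ with $q\alpha=\beta q$, construct a continuous $\sigma\colon\ch\to\cg$ with $\sigma(0)=0$, smooth on $\ch\setminus\{0\}$, $q\sigma=\id_\ch$, and $\alpha\sigma=\sigma\beta$.

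Choose a linear complement $W\sub\cg$ to $\ker q$ and let $s:=(q|_W)^{-1}\colon\ch\to W\sub\cg$, a (generally non-equivariant) linear section. Since $\beta$ has spectral radius $<1$, pick an inner product norm $\|\cdot\|$ on $\ch$ with $\|\beta\|_\op=:r<1$ and similarly a norm on $\cg$ with $\|\alpha\|_\op<1$; note then $\|\beta^{-1}\|_\op\geq 1/r>1$. Take a smooth $\psi\colon[0,\infty)\to[0,1]$ with $\psi=0$ on $[0,(r/2)^2]$ and $\psi=1$ on $[1,\infty)$, set $\Psi(y):=\psi(\|y\|^2)$ (smooth on all of $\ch$), and define
\[
\chi(y):=\Psi(y)-\Psi(\beta y)\qquad(y\in\ch).
\]
Then $\chi$ is smooth, and a direct check using $\|\beta y\|\leq r\|y\|$ and $\|\beta y\|\geq\|y\|/\|\beta^{-1}\|_\op$ shows that $\Supp(\chi)$ lies in the compact annulus $A:=\{y:r/2\leq\|y\|\leq\|\beta^{-1}\|_\op\}$. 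A telescoping calculation using $\|\beta^n y\|\to 0$ and $\|\beta^{-n}y\|\geq r^{-n}\|y\|\to\infty$ for $y\neq 0$ gives
\[
\sum_{n\in\Z}\chi(\beta^n y)=\lim_{N\to\infty}\bigl(\Psi(\beta^{-N}y)-\Psi(\beta^{N+1}y)\bigr)=1-0=1,
\]
with only finitely many non-zero summands at each fixed $y\neq 0$.

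I then set $\sigma(0):=0$ and, for $y\neq 0$,
\[
\sigma(y):=\sum_{n\in\Z}\chi(\beta^{-n}y)\,\alpha^n\bigl(s(\beta^{-n}y)\bigr).
\]
Since the $\{\beta^n(A)\}_{n\in\Z}$ form a locally finite cover of $\ch\setminus\{0\}$, the sum is locally finite there, so $\sigma$ is smooth on $\ch\setminus\{0\}$. The identities $q\alpha^n=\beta^n q$ and $qs=\id_\ch$ give $q\sigma(y)=y\sum_n\chi(\beta^{-n}y)=y$, and the substitution $n\mapsto n+1$ yields $\sigma(\beta y)=\alpha\sigma(y)$.

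The main obstacle is continuity at the origin. For $y$ near $0$, any index $n$ with $\chi(\beta^{-n}y)\neq 0$ has $\beta^{-n}y\in A$, forcing $n\to+\infty$ as $y\to 0$. Since $\|\beta^{-(n+1)}y\|\geq r^{-1}\|\beta^{-n}y\|$, the number of such $n$ is bounded by a constant $K$ independent of $y$. Each non-zero summand is controlled by
\[
\bigl\|\alpha^n(s(\beta^{-n}y))\bigr\|_\cg\leq\|\alpha^n\|_\op\cdot\|s\|_\op\cdot\|\beta^{-1}\|_\op,
\]
and $\|\alpha^n\|_\op\to 0$ by contractiveness of $\alpha$. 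Combined with $\sum_n\chi(\beta^{-n}y)=1$, this forces $\sigma(y)\to 0=\sigma(0)$. This is precisely where contractiveness of $\alpha$ is essential: it tames the potentially unbounded growth of $s(\beta^{-n}y)$ as $n\to+\infty$, reconciling the rigidity that equivariance imposes along each $\beta$-orbit with the required continuity at $0$.
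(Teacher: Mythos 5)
Your proposal is correct, and after the same reduction as the paper (Lemmas \ref{conncase} and \ref{insidenil}, identifying everything with linear data on the nilpotent Lie algebras), your construction of $\sigma$ differs genuinely from the paper's. The paper takes one linear section $\tau$, interpolates between $\tau$ and $\tau_1:=\alpha\circ\tau\circ\beta^{-1}$ on a fundamental annulus $U\setminus\beta(\wb{W})$ by a two-element partition of unity (using that $\ker q$ is a vector subspace), and then extends equivariantly by the piecewise formula $\sigma=\alpha^n\circ\sigma_0\circ\beta^{-n}$ on $\beta^n(U\setminus\beta(\wb{W}))$; this requires the explicit well-definedness check on overlaps and a separate verification of the section and equivariance properties. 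You instead build a $\beta$-adapted partition of unity $\{\chi\circ\beta^{-n}\}_{n\in\Z}$ via telescoping and define $\sigma(y)=\sum_n\chi(\beta^{-n}y)\,\alpha^n(s(\beta^{-n}y))$; equivariance is then an index shift, $q\circ\sigma=\id$ follows from linearity of $q$ and $\sum_n\chi(\beta^{-n}y)=1$, and there is no overlap/well-definedness issue, while smoothness off $0$ and continuity at $0$ follow from local finiteness, the uniform bound on the number of active indices, and $\|\alpha^n\|_{\op}\to 0$ — all of which you argue correctly. Both proofs hinge on the same mechanism at the origin (contractivity of $\alpha$ tames the translated sections), so neither buys more generality; yours is somewhat cleaner to verify, the paper's stays closer to the group-theoretic picture of gluing a section over a fundamental domain. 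One small point to patch: Lemma \ref{reallin} only provides \emph{some} norm with $\|\beta\|_{\op}<1$, not an inner-product norm, and you need $y\mto\|y\|^2$ smooth; this is easily repaired by a Lyapunov-type inner product $\langle x,y\rangle_*:=\sum_{n\geq 0}\lambda^{-2n}\langle\beta^n x,\beta^n y\rangle$ with $\rho(\beta)<\lambda<1$ (or by replacing $\psi(\|y\|^2)$ with a smooth Urysohn function for the two radii), and you may as well take $\psi$ non-decreasing so that $\chi\geq 0$, though nothing in your argument actually requires nonnegativity.
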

\begin{proof}
By Lemmas~\ref{conncase} and~\ref{insidenil},
we may assume that $G=(\cg,*)$
and $H=(\ch,*)$ for nilpotent Lie algebras~$\cg$ and~$\ch$,
endowed with the BCH-multiplication;
moreover, we may assume
that $\alpha$ and~$\beta$ are contractive Lie algebra automorphisms,
and~$q$ is a surjective Lie algebra homomorphism.
There exists a linear map $\tau\colon \ch\to\cg$
such that $q\circ \tau=\id_\ch$.
Notably, $\tau$ is a smooth section for~$q$.
However, $\tau$ need not be equivariant.
To construct an equivariant section, we proceed as follows.
By Lemma~\ref{reallin},
there exists a norm~$\|.\|$ on~$\ch$ with respect
to which $\theta:=\|\beta\|_{\op}<1$.
Pick $r\in\,]\theta,1[$ and
set
\[
U:=B^\ch_1(0),\quad W:= B^\ch_r(0).
\]
Then $\wb{W}=\wb{B}^\ch_r(0)\sub U$ holds, $\wb{U}=\wb{B}^\ch_1(0)$ and
\[
\beta(\wb{U})\sub\wb{B}^\ch_\theta(0)\sub B^\ch_r(0)=W.
\]
Notably, $\beta(U)\sub U$ and
\begin{equation}\label{iteralso}
\beta^k(U)\sub W\quad\mbox{for all $k\in\N$.}
\end{equation}
Moreover,
\begin{equation}\label{thuscove}
U\setminus \{0\}=\bigcup_{n\in\N_0}\beta^n(U\setminus \beta(U))
=\bigcup_{n\in\N_0}\beta^n(U\setminus \beta(\wb{W}))
\end{equation}
and $\ch\setminus\{0\}=\bigcup_{n\in\Z}\beta^n(U\setminus\beta(\wb{W}))$.
For each open cover of the finite-dimensional smooth manifold~$\ch$,
there exists a smooth partition of unity subordinate to it (cf.\ \cite[Corollary~III.2.4]{Lan}).
As the sets $W$ and $\ch\setminus \beta(\wb{U})$
form an open cover of~$\ch$,
we therefore find smooth maps $g\colon\ch\to\R$
and $h\colon\ch\to\R$ with image in $[0,1]$
such that $h+g=1$ and $\Supp(g)\sub W$, $\Supp(h)\sub
\ch\setminus\beta(\wb{U})$.
Note that also $\tau_1:=\alpha\circ\tau\circ\beta^{-1}$
is a section for~$q$. We define
\[
\sigma_0\colon
\ch \to \cg,\quad
x\mto
\tau(x)*(g(x)(\tau(x)^{-1}*\tau_1(x))).
\]
Then $\sigma_0$ is smooth and a section for~$q$
(as $\ker(q)$ is a vector subspace of~$\cg$ and $\tau(x)^{-1}*\tau_1(x)\in\ker(q)$).
Since $g(x)=0$ for $x\in\ch\setminus W$, we have
\begin{equation}\label{fipa}
\sigma_0(x)=\tau(x)\quad\mbox{for all $\, x\in \ch\setminus W$.}
\end{equation}
Since $g(x)=1$ for all $x\in\beta(\wb{U})$, we have
\begin{equation}\label{secpa}
\sigma_0(x)=\tau_1(x)=\alpha(\tau(\beta^{-1}(x)))\quad\mbox{for all $\,x\in\beta(\wb{U})$.}
\end{equation}
We now define a map $\sigma\colon \ch\to\cg$ via $\sigma(0):=0$ and
\[
\sigma(x):=\alpha^n(\sigma_0(\beta^{-n}(x)))\quad\mbox{if $\,n\in\Z\,$ and $x\in \beta^n(U\setminus\beta(\wb{W}))$.}
\]
To see that $\sigma$ is well-defined, let $x\in\ch$ and
$n<m$ be integers such that
\[
x\in \beta^n(U\setminus \beta(\wb{W}))\cap\beta^m(U\setminus \beta(\wb{W})).
\]
Then $U\setminus\beta(\wb{W})\cap \beta^{m-n}(U\setminus \beta(\wb{W}))\not=\emptyset$,
whence $\beta^{m-n}(U)$ cannot be a subset of~$\beta(W)$ and thus $m-n-1\leq 0$,
by~(\ref{iteralso}). Hence $m=n+1$.
Now $y:=\beta^{-n-1}(x)=\beta^{-m}(x)\in U\setminus\beta(\wb{W})$ and
\begin{equation}\label{chk1}
\beta(y)\in (U\setminus\beta(\wb{W}))\cap \underbrace{\beta(U\setminus \beta(\wb{W}))}_{\sub \, \beta(U)\sub \, U}
\sub
\beta(U)\setminus \beta(\wb{W})=\beta(U\setminus \wb{W}),
\end{equation}
whence
\begin{equation}\label{chk2}
y \in U\setminus \wb{W}.
\end{equation}
Then
\begin{eqnarray}
\alpha^n(\sigma_0(\beta^{-n}(x)))
&=& \alpha^n(\sigma_0(\beta(y)))=\alpha^{n+1}(\tau(y))
=\alpha^{n+1}(\sigma_0(y))\notag\\
&=&\alpha^{n+1}(\sigma_0(\beta^{-n-1}(x)))
=\alpha^m(\sigma_0(\alpha^{-m}(x))).\label{henceok}
\end{eqnarray}
In fact, the second equality holds by~(\ref{secpa}), applied to $\beta(y)$ in place of~$x$,
which is possible by~(\ref{chk1}).
The third equality holds by~(\ref{fipa}), which is applicable by~(\ref{chk2}).
In view of~(\ref{henceok}), $\sigma$ is well-defined.\\[2.3mm]
By definition, $\sigma$ coincides with the smooth section
$\alpha^n \circ \sigma_0\circ\beta^{-n}$ of~$q$ on the open set
$\beta^n(U\setminus\beta(\wb{W}))$ for each $n\in\Z$.
As the latter sets cover $\ch\setminus\{0\}$ (cf.\ (\ref{thuscove})),
we see that $\sigma|_{\ch\setminus \{0\}}$ is smooth
and a section for~$q$. Thus~$\sigma$ is a section for~$q$,
since also $q(\sigma(0))=q(0)=0$.\\[2.3mm]
To see that
$\sigma$ is continuous, it only remains to check its continuity at~$0$.
To this end, let $V\sub \cg$ be a $0$-neighbourhood.
Since $\sigma_0(\wb{U})$ is compact and $\alpha$ is contractive,
there exists $n_0\in\N_0$ such that
\[
\alpha^n(\sigma_0(\wb{U}))\sub V\quad\mbox{for all $\,n\geq n_0$.}
\]
As a consequence, $\sigma(x)\in V$ for all
$x\in\bigcup_{n\geq n_0}\beta^n(U\setminus\beta(\wb{W}))=
\beta^n(U)\setminus\{0\}$ (cf.\ (\ref{thuscove})).
Since $\sigma(0)=0\in V$, we deduce that $\sigma(\beta^n(U))\sub V$.
Thus $\sigma$ is continuous at~$0$.\\[2.3mm]
The condition $\alpha(\sigma(x))=\sigma(\beta(x))$ is trivial if $x=0$.
If $x\in\ch\setminus\{0\}$, then $x\in\beta^n(U\setminus\beta(\wb{W}))$
for some $n\in\Z$ (cf.\ (\ref{thuscove})).
Then $\beta(x)\in\beta^{n+1}(U\setminus\beta(\wb{W}))$
and we get
\[
\sigma(\beta(x))=\alpha^{n+1}(\sigma_0(\beta^{-n-1}(\beta(x))))=
\alpha(\alpha^n(\sigma_0(\beta^{-n}(x))))=\alpha(\sigma(x)),
\]
using the piecewise definition of~$\sigma$ for the first equality
and the last.
\end{proof}
Let us briefly discuss differentiability of equivariant sections at~$e$.
\begin{la}\label{discusssmooth}
In the setting of Lemma~{\rm\ref{sectionsconn}},
the following are equivalent:
\begin{itemize}
\item[\rm(a)]
$\sigma$ can be chosen as a smooth map;
\item[\rm(b)]
$\sigma$ can be chosen differentiable at~$e$;
\item[\rm(c)]
There exists a linear map $\tau\colon L(H)\to L(G)$
such that $L(q)\circ \tau=\id_{L(H)}$
and $L(\alpha)\circ \tau=\tau\circ L(\beta)$.
\end{itemize}
\end{la}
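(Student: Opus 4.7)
The plan is to prove the cyclic chain (a) $\Rightarrow$ (b) $\Rightarrow$ (c) $\Rightarrow$ (a). The first implication is immediate, since a smooth map on~$H$ is in particular differentiable at~$e$.

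For (b) $\Rightarrow$ (c), I would take any continuous equivariant section $\sigma\colon H\to G$ with $\sigma(e)=e$ that is moreover differentiable at~$e$, and set $\tau:=L(\sigma):=T_e(\sigma)\colon L(H)\to L(G)$. Differentiating the identity $q\circ\sigma=\id_H$ at~$e$ (using that $\sigma(e)=e$ and that $q$ is smooth, being a continuous homomorphism of real Lie groups) gives $L(q)\circ\tau=\id_{L(H)}$. Likewise, differentiating $\alpha\circ\sigma=\sigma\circ\beta$ at $e$ yields $L(\alpha)\circ\tau=\tau\circ L(\beta)$, as required.

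For (c) $\Rightarrow$ (a), I would pass to the BCH picture. By Lemma~\ref{conncase}\,(a) we may identify $(G,\alpha)=((L(G),*),L(\alpha))$ and $(H,\beta)=((L(H),*),L(\beta))$. By Lemma~\ref{insidenil}, the continuous homomorphism $q$ is then a Lie algebra homomorphism $L(G)\to L(H)$, and inspection (or directly differentiating) shows that it coincides with its own tangent map $L(q)$. Under this identification, the linear map $\tau\colon L(H)\to L(G)$ is itself a smooth map from~$H$ to~$G$; the two hypotheses in (c) literally assert that $\sigma:=\tau$ is a set-theoretic section of~$q$ intertwining $\alpha$ and~$\beta$. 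Since a linear map is smooth, setting $\sigma:=\tau$ proves~(a).

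There is no serious obstacle: the content of the lemma is essentially that in the BCH picture everything is linear-algebraic, so an equivariant splitting at the tangent level automatically promotes to a smooth equivariant section of the original morphism. The only point requiring a little care is the identification of~$q$ with~$L(q)$ in (c) $\Rightarrow$ (a), which is supplied by Lemma~\ref{insidenil}.
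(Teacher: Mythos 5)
Your proposal is correct and follows essentially the same route as the paper: the paper also notes (a)$\Rightarrow$(b) and (c)$\Rightarrow$(a) are immediate after identifying $(G,H,q,\alpha,\beta)$ with $(L(G),L(H),L(q),L(\alpha),L(\beta))$ via the BCH picture, and proves (b)$\Rightarrow$(c) by differentiating the section and the two identities $q\circ\sigma=\id$ and $\alpha\circ\sigma=\sigma\circ\beta$ at the identity (i.e.\ taking $\tau'(0)$) using the Chain Rule. Your $\tau:=T_e(\sigma)$ is exactly the paper's $\tau'(0)$, so there is no substantive difference.
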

\begin{proof}
The implication ``(a)$\impl$(b)'' is trivial and also ``(c)$\impl$(a)'', once we identify
$(G,H,q,\alpha,\beta)$ with $(L(G),L(H),L(q),L(\alpha),L(\beta))$
as explained in the preceding proof.
To see that (b) implies (c), consider a continuous section $\tau\colon L(H)\to L(G)$
for $L(q)$ which is differentiable at~$0$
and satisfies
\begin{equation}\label{equilin}
L(\alpha)\circ\tau=\tau\circ L(\beta).
\end{equation}
Then $L(q)\circ\tau=\id_{L(H)}$ has the derivative
\[
\id_{L(H)}=L(q)\circ \tau'(0)
\]
at~$0$, by the Chain Rule. Thus $\tau'(0)\colon L(H)\to L(G)$
is a linear section for~$L(q)$.
Differentiating (\ref{equilin}) at~$0$, we get $L(\alpha)\circ\tau'(0)=\tau'(0)\circ L(\beta)$.
\end{proof}
We mention that $\sigma$ in Lemma~\ref{sectionsconn}
cannot always be chosen as a smooth map.
\begin{example}
Recall from the introduction that $\Omega_\infty$ is the set of monic irreducible real polynomials $f$ such that all eigenvalues of~$f$ have absolute value less than~$1$, and that $\alpha_{f^n}$ is the automorphism of $E_{f^n} = \R[X]/{f^n}\R[X]$ induced by multiplication by~$X$, for $n\in\N$. 
For $f\in\Omega_\infty$,
the mapping\linebreak
$q\colon E_{f^2}\to E_f$,
$g+f^2\R[X]\mto g+f\R[X]$
is a surjective morphism of contraction groups from $(E_{f^2},\alpha_{f^2})$
to $(E_f, \alpha_f)$ which does not admit an equivariant linear
section (as the latter would be an $\R[X]$-module homomorphism
and thus $E_{f^2}\cong (E_f)^2$, which is absurd).
By Lemma~\ref{discusssmooth},
$q$ cannot admit a smooth equivariant section.
\end{example}
\section{More on homomorphisms, series and\\
\hspace*{.4mm}simple factors}\label{newsec}
In this section, we first provide further results concerning
continuous group homomorphisms between locally compact
contraction groups, including a proof of the statements of Theorem~G.
These results ensure a certain ``closed product property'',
which is essential for our proof of a Jordan-H\"{o}lder
Theorem. After a brief review of facts concerning
contractive automorphisms of real vector spaces,
we then establish the classification described in
Theorem~F.
The section closes with preparatory results for Section~\ref{secab}.\\[7mm]
{\bf Results related to Theorem~G}
\begin{prop}\label{nicehomtf}
Let $\phi\colon G\to H$ be a continuous group homomorphism
between contractible locally compact groups~$G$ and~$H$.
Then $\phi(G)$ is closed in~$H$ if and only if $\phi(\tor(G))$ is closed.
In this case,
$\phi|^{\phi(G)}\colon G\to\phi(G)$
is an open map.
\end{prop}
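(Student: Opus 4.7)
The plan is to reduce the problem to the torsion component using Theorem~A and Proposition~\ref{prethmA}, and then invoke the Open Mapping Theorem for the final assertion.

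Since $G$ and $H$ are contractible, fix contractive automorphisms $\alpha$ on $G$ and $\beta$ on $H$. By Theorem~A, we obtain direct product decompositions
\[
G = G_e \times G_{p_1} \times \cdots \times G_{p_n} \times \tor(G)
\]
and, analogously, $H = H_e \times H_{q_1} \times \cdots \times H_{q_m} \times \tor(H)$, with the $G_{p_i}$ and $H_{q_j}$ being $p_i$-adic and $q_j$-adic Lie groups, respectively. Although $\phi$ is not assumed equivariant, Proposition~\ref{prethmA} still yields $\phi(G_e) \sub H_e$, $\phi(\tor(G)) \sub \tor(H)$ and $\phi(G_p) \sub H_p$ for every prime~$p$ (with the convention $H_p = \{e\}$ if $p \notin \{q_1,\ldots,q_m\}$). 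Hence
\[
\phi(G) \,=\, \phi(G_e) \cdot \phi(G_{p_1}) \cdots \phi(G_{p_n}) \cdot \phi(\tor(G))
\]
is an internal direct product inside $H$, each factor being contained in the corresponding factor of the decomposition of~$H$.

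Next, by Lemmas~\ref{conncase}(a) and~\ref{padcase}(a), we may identify $G_e$, $H_e$, and each $G_{p_i}$, $H_{p_i}$ with the BCH-group $(\cg,*)$ of its nilpotent real or $p$-adic Lie algebra. By Lemma~\ref{insidenil}, the restrictions $\phi|_{G_e}\colon G_e \to H_e$ and $\phi|_{G_{p_i}}\colon G_{p_i} \to H_{p_i}$ are then Lie algebra homomorphisms between finite-dimensional nilpotent Lie algebras; in particular, their images are Lie subalgebras and hence closed vector subspaces of the respective targets. Thus each of $\phi(G_e)$ and $\phi(G_{p_i})$ is automatically closed in the relevant factor of $H$, and the product decomposition above shows that $\phi(G)$ is closed in $H$ if and only if the remaining factor $\phi(\tor(G))$ is closed in $\tor(H)$ (equivalently, in $H$). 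This proves the first claim.

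For the openness assertion, assume $\phi(G)$ is closed in $H$. Then $\phi(G)$ is locally compact as a closed subgroup of the locally compact group $H$, while $G$ is $\sigma$-compact by the result of Siebert cited in the proof of Lemma~\ref{opmaphere}. The Open Mapping Theorem (\cite[Theorem~5.29]{HaR}) therefore implies that the surjective continuous homomorphism $\phi|^{\phi(G)}\colon G\to\phi(G)$ is open. The main obstacle is verifying that $\phi$ respects the direct-product decompositions of~$G$ and~$H$ in the absence of equivariance, but this is precisely what Proposition~\ref{prethmA} delivers.
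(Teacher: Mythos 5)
Your argument is correct and follows essentially the same route as the paper: decompose $G$ and $H$ via Theorem~A, use Proposition~\ref{prethmA} (which indeed does not require equivariance) to see that $\phi$ respects the factors, observe via Lemmas~\ref{conncase}, \ref{padcase} and~\ref{insidenil} that the images of $G_e$ and the $G_{p_i}$ are closed, and finish with $\sigma$-compactness and the Open Mapping Theorem. No gaps.
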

\begin{proof}
Let $\alpha$ and $\beta$ be contractive automorphisms
for~$G$ and $H$, respectively.
Recall from Theorem~A that $\tor(G)$ is a closed subgroup of~$G$
and $G=G_e\times G_{p_1}\times\cdots\times G_{p_n}\times \tor(G)$
for certain primes $p_1<\cdots<p_n$ and $p$-adic Lie groups~$G_p$
which are $\alpha$-stable closed normal subgroups of~$G$.
Set $G_p:=\{e\}$ for primes $p\not\in\{p_1,\ldots, p_n\}$.
Likewise, $\tor(H)$ is closed in~$H$ and
\begin{equation}\label{decoh2}
H=H_e\times H_{q_1}\times\cdots\times H_{q_m}\times \tor(H)
\end{equation}
for certain primes $q_1<\cdots<q_m$ and $p$-adic Lie groups~$H_p$
which are $\beta$-stable closed normal subgroups of~$H$.
By Proposition~\ref{prethmA}, we have $\phi(\tor(G))\sub\tor(H)$ and
\[
\phi(G)=\phi(G_e)\times \phi(G_{q_1})\times\cdots\times\phi(G_{q_m})\times\phi(\tor(G)),
\]
where $\phi(G_e)$ is closed in $H_e$ and $\phi(G_p)$ is closed in $H_p$
for all $p\in\{q_1,\ldots, q_m\}$, by Lemmas~\ref{conncase}, \ref{padcase}
and~\ref{insidenil}. In view of (\ref{decoh2}), the desired equivalence follows.
Now $G$ is $\sigma$-compact, whence also $\phi(G)$ is $\sigma$-compact.
If $\phi(G)$ is closed and hence locally compact,
the Open Mapping Theorem \cite[Theorem~5.29]{HaR} shows that $\phi|^{\phi(G)}$
is an open map.
\end{proof}
For example, $\phi(G)$ is closed for
each continuous group homomorphism\linebreak
$\phi\colon G\to H$
between contractible locally compact groups such that $\tor(G)\sub\ker(\phi)$
(which is the case if $G$ or $H$ is torsion-free).\\[2.3mm]
The next example shows that $\phi(G)$ need not be closed in Proposition~\ref{nicehomtf}.
\begin{example}
Consider the additive group $A:=\bF_p(\!(t)\!)=\bF_p^{(-\N)}\times\bF^{\N_0}$
with the right shift $\alpha\colon A\to A$, $f\mto tf$.
Then the map
\[
\phi\colon A\to A,\quad (a_n)_{n\in\Z}\mto\sum_{n=1}^\infty a_{-2n}t^{-n}
+\sum_{n=0}^\infty a_{-2n-1}t^n\vspace{-1mm}
\]
is a continuous group homomorphism. The image of~$\phi$ is the
set $\bF_p^{(\Z)}$ of all finitely supported sequences in~$A$.
Thus $\phi(A)$ is a dense proper subgroup of~$A$,
and so~$\phi(A)$ is not closed in~$A$.
The co-restriction $\phi|^{\phi(A)}\colon A\to\phi(A)$
is not an open map, as otherwise $\phi(A)$ would be isomorphic to
the locally compact group $A/\ker(\phi)$, whence $\phi(A)$
would be complete
and hence closed in~$A$ (a contradiction).
\end{example}
\begin{cor}\label{precloprod}
If $\phi\colon (G,\alpha)\to (H,\alpha)$ is a morphism of
locally compact contraction groups, then $\phi(G)$ is a closed
subgroup of~$H$ and $\phi|^{\phi(G)}$ is an open map.
\end{cor}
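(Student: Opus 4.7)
The plan is to reduce the statement to Proposition \ref{nicehomtf} by showing that, under the equivariance hypothesis, $\phi(\tor(G))$ is automatically closed in $H$. Once this is established, Proposition \ref{nicehomtf} immediately gives both conclusions: that $\phi(G)$ is closed and that $\phi|^{\phi(G)}$ is open.

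To prove that $\phi(\tor(G))$ is closed, I would first observe that $\tor(G)$ and $\tor(H)$ are closed subgroups of $G$ and $H$ respectively by Theorem A(b), and that they are totally disconnected (again by Theorem A). Next, since $\phi$ is a group homomorphism, torsion is preserved, so $\phi(\tor(G)) \subseteq \tor(H)$. The equivariance $\alpha_H \circ \phi = \phi \circ \alpha_G$ passes to the restriction $\phi|_{\tor(G)} \colon \tor(G) \to \tor(H)$, because each $\alpha$ restricts to an automorphism of the corresponding torsion subgroup (torsion subgroups are fully invariant, as recorded in \ref{fuinva}, hence in particular stable under the relevant automorphism). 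Thus $\phi|_{\tor(G)}$ is a morphism between totally disconnected locally compact contraction groups.

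At this point I would invoke \ref{nicemorph} (which is \cite[Corollary 3.6]{GW}): any such morphism has closed image and is an open map onto its image. Consequently $\phi(\tor(G))$ is closed in $\tor(H)$, and since $\tor(H)$ is closed in $H$, we conclude $\phi(\tor(G))$ is closed in $H$. Applying Proposition \ref{nicehomtf} then completes the proof of the corollary.

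There is no real obstacle here: the content of the corollary is essentially a repackaging of Proposition \ref{nicehomtf} using the known totally-disconnected case from \cite{GW}, with the torsion subgroups playing the role that allows the totally disconnected theory to be brought to bear on the general situation.
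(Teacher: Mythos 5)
Your proposal is correct and follows essentially the same route as the paper: restrict $\phi$ to the closed, totally disconnected, $\alpha$-stable torsion subgroups (Lemma~\ref{torisgp}/Theorem~A), apply \ref{nicemorph} to conclude $\phi(\tor(G))$ is closed in $\tor(H)$ and hence in $H$, and then invoke Proposition~\ref{nicehomtf}. The only difference is that you spell out the (routine) equivariance of the restriction, which the paper leaves implicit.
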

\begin{proof}
We know from Lemma~\ref{torisgp}
that $\tor(G)$ is an $\alpha$-stable closed
subgroup of~$G$ and totally disconnected. Likewise,
$\tor(H)$ is a closed subgroup of~$H$ and totally disconnected.
Hence $\phi(\tor(G))$ is closed in $\tor(H)$ and hence in~$H$,
by~\ref{nicemorph}. The assertions now follow from Proposition~\ref{nicehomtf}.
\end{proof}
In particular, locally compact contraction groups always have
the following ``closed product propertiy'', which previously
had only been recorded for totally disconnected~$G$ (see \cite[Corollary~3.2]{GW}).
\begin{cor}\label{cloprod}
Let $(G,\alpha)$ be a locally compact contraction group,
$N$ be an $\alpha$-stable closed subgroup of~$G$
and $H$ be an $\alpha$-stable closed subgroup of~$G$
which normalizes~$N$. Then $NH$ is a closed $\alpha$-stable
subgroup of~$G$.
\end{cor}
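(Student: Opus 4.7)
\textbf{Proof plan for Corollary \ref{cloprod}.} The plan is to realize $NH$ as the image of a morphism between locally compact contraction groups and then invoke Corollary~\ref{precloprod}, which guarantees that such images are closed. Since $H$ normalizes~$N$, conjugation gives a continuous action $H\times N\to N$, $(h,n)\mto hnh^{-1}$ (continuous because $G$ is a topological group and $N,H$ are closed in~$G$), so the outer semidirect product $P:=N\rtimes H$ is a well-defined topological group whose underlying space is the locally compact space $N\times H$. Hence $P$ is a locally compact group.

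Next I would verify that the componentwise map $\wt{\alpha}\colon P\to P$, $(n,h)\mto(\alpha(n),\alpha(h))$, is a contractive automorphism. It is bijective and bicontinuous because $\alpha|_N$ and $\alpha|_H$ are; it is contractive because $\alpha^k(n)\to e$ and $\alpha^k(h)\to e$ for all $n\in N$, $h\in H$; and it is a group homomorphism by a one-line calculation using that $\alpha$ is an automorphism of~$G$:
\[
\wt\alpha\bigl((n_1,h_1)(n_2,h_2)\bigr)=\bigl(\alpha(n_1)\alpha(h_1)\alpha(n_2)\alpha(h_1)^{-1},\alpha(h_1)\alpha(h_2)\bigr)=\wt\alpha(n_1,h_1)\wt\alpha(n_2,h_2).
\]
Thus $(P,\wt\alpha)$ is a locally compact contraction group.

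Now consider the multiplication map $\mu\colon P\to G$, $(n,h)\mto nh$. It is continuous and a group homomorphism (this is exactly how the semidirect product structure on $P$ is designed: $\mu((n_1,h_1)(n_2,h_2))=n_1h_1n_2h_1^{-1}h_1h_2=n_1h_1n_2h_2$). Moreover $\mu$ is equivariant, $\alpha\circ\mu=\mu\circ\wt\alpha$, so $\mu$ is a morphism of locally compact contraction groups. By Corollary~\ref{precloprod}, $\mu(P)=NH$ is closed in~$G$. Finally, $\alpha(NH)=\alpha(N)\alpha(H)=NH$ since $N$ and $H$ are $\alpha$-stable, so $NH$ is $\alpha$-stable, completing the proof.

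There is no real obstacle; the only point requiring care is the verification that the semidirect product $N\rtimes H$ carries the structure of a locally compact contraction group with $\wt\alpha$ as above, so that Corollary~\ref{precloprod} applies. Everything else reduces to direct computation.
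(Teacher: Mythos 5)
Your proposal is correct and is essentially identical to the paper's argument: the paper likewise forms the semidirect product $N\rtimes H$ for the conjugation action, equips it with the contractive automorphism $\alpha|_N\times\alpha|_H$, and applies Corollary~\ref{precloprod} to the product map $(n,h)\mto nh$ to conclude that $NH$ is closed. Your extra verifications (homomorphism property of $\wt\alpha$, equivariance of $\mu$, $\alpha$-stability of $NH$) are exactly the routine details the paper leaves implicit.
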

\begin{proof}
Let $H$ act on $N$ via $h.n:=hnh^{-1}$ for $h\in H$ and $n\in N$.
The corresponding semidirect product $N\rtimes H$ is locally compact
group and the map $\alpha|_N\times\alpha|_H$ a contractive automorphism thereof.
As the product map
\[
\pi\colon N\rtimes H\to G,\quad (n,h)\mto nh
\]
is a morphism of contraction groups, we deduce with Corollary~\ref{precloprod}
that $NH=\pi(N\rtimes H)$ is closed in~$G$.\vspace{5mm}
\end{proof}
{\bf Composition series and the Jordan-H\"{o}lder Theorem}
\begin{numba}
A series (\ref{serial}) of contraction groups
is called a \emph{refinement}
of a series
\begin{equation}\label{series2}
\{e\}=H_0\lhd H_1\lhd\cdots\lhd H_m=G
\end{equation}
of contraction groups
if $\{H_0,\ldots, H_m\}\sub \{G_0,\ldots, G_n\}$.
If $m=n$ and there is a permutation
$\pi$ of $\{1,\ldots, n\}$ such that
\[
G_j/G_{j-1}\cong H_{\pi(j)}/H_{\pi(j)-1}
\]
as contraction groups
for all $j\in\{1,\ldots, n\}$, then~(\ref{serial})
and~(\ref{series2}) are called \emph{isomorphic}
series of contraction groups.
\end{numba}
\begin{la}\label{conncomps}
Let $(G,\alpha)$ be a locally compact contraction group.
If $G$ is connected, then $n\leq \dim_\R L(G)$
for each series $\{e\}=G_0\lhd G_1\lhd\cdots\lhd G_n=G$
of contraction groups for $(G,\alpha)$
such that $G_{j-1}$ is a proper subgroup of $G_j$ for all $n\in\{1,\ldots, n\}$.
In particular, $(G,\alpha)$ admits a composition series
of contraction groups.
\end{la}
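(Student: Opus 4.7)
The plan is to leverage the Lie-theoretic identification from Lemma~\ref{conncase}: $(G,\alpha)$ is isomorphic to $((L(G),*), L(\alpha))$ for a finite-dimensional nilpotent real Lie algebra $L(G)$ equipped with a contractive Lie algebra automorphism. Granted this, the length bound $n \leq \dim_\R L(G)$ should reduce to a dimension count on Lie subalgebras, and the existence of a composition series will follow by a standard refinement argument.

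The essential preliminary step is to show that every $\alpha$-stable closed subgroup $H$ of $G$ is automatically connected. I would apply Theorem~A to the locally compact contraction group $(H,\alpha|_H)$ to obtain an internal decomposition $H = H_e \times H_{\td}$. Since $H_{\td}$ is a closed totally disconnected subgroup of the connected real Lie group $G$, it is discrete. But $(H_{\td}, \alpha|_{H_{\td}})$ is then a discrete contraction group, and in such a group the condition $\alpha^n(x) \to e$ forces $\alpha^n(x) = e$ for some $n$, whence $x=e$ because $\alpha$ is an automorphism. Thus $H_{\td} = \{e\}$ and $H = H_e$ is connected.

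Applying this to each $G_j$ yields, by Cartan's closed-subgroup theorem, $G_j = \exp(\cg_j)$ for a Lie subalgebra $\cg_j := L(G_j) \sub L(G)$. The strict inclusion $G_{j-1}\subsetneq G_j$ then translates into the strict inclusion $\cg_{j-1}\subsetneq\cg_j$ of Lie subalgebras, so $\dim_\R \cg_j \geq \dim_\R \cg_{j-1}+1$, and induction gives $n \leq \dim_\R \cg_n = \dim_\R L(G)$. For the existence of a composition series, I would refine the trivial series $\{e\}\lhd G$ iteratively: whenever some factor $G_j/G_{j-1}$ fails to be simple as a contraction group, pick a proper nontrivial closed $\wb{\alpha}_j$-stable normal subgroup $N$ of $G_j/G_{j-1}$ and insert its preimage $q^{-1}(N)$ (with $q\colon G_j\to G_j/G_{j-1}$ the quotient) strictly between $G_{j-1}$ and $G_j$. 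By the length bound just established, any strictly ascending series of contraction groups in $G$ has at most $\dim_\R L(G)$ steps, so this procedure terminates in finitely many iterations and produces a composition series.

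The main obstacle is the connectedness assertion above: a priori, the $G_j$ could be arbitrary closed $\alpha$-stable subgroups, and without reducing them to Lie subalgebras one has no way to control $n$ by a dimension count. The leverage comes from two ingredients, namely that a closed totally disconnected subgroup of a connected real Lie group is automatically discrete, and that a discrete contraction group is trivial.
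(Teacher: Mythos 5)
Your proof is correct and follows essentially the same route as the paper: identify the terms $G_j$ with Lie subgroups corresponding to Lie subalgebras of $L(G)$, bound the length by $\dim_\R L(G)$, and obtain a composition series because any properly ascending series can be refined only finitely often. The only difference is bookkeeping: the paper gets "each $G_j$ is a Lie subalgebra" tersely from Lemma~\ref{insidenil} after the identification $G\cong (L(G),*)$, whereas you make the implicit connectedness of the $G_j$ explicit via the decomposition $H=H_e\times H_{\td}$, discreteness of closed totally disconnected subgroups of Lie groups, and triviality of discrete contraction groups — a valid filling-in of the same step.
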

\begin{proof}
We may assume that $G=(\cg,*)$ for a nilpotent real Lie algebra~$\cg$
and that $\alpha$ is a contractive Lie algebra automorphism.
Lemma~\ref{insidenil} implies that $G_j$ is a Lie subalgebra of~$\cg$ for each $j\in \{1,\ldots, n\}$,
and $\dim_\R(G_{j-1})< \dim_\R(G_j)$.
The first assertion follows from this and also the second,
as each properly ascending series (\ref{serial}) with maximal~$n$
will be a composition series.
\end{proof}
The following result generalizes the case of totally disconnected groups
discussed in \cite[Theorem~3.3]{GW}.
\begin{thm}\label{JH}
For each locally compact contraction group~$(G,\alpha)$, we have:
\begin{itemize}
\item[\rm(a)]
There exists a composition series $\{e\}=G_0\lhd G_1\lhd\cdots\lhd G_n=G$
of contraction groups for $(G,\alpha)$.
\item[\rm(b)]
Zassenhaus Lemma:
Any two series of contraction groups for $(G,\alpha)$
admit refinements which are isomorphic as series of contraction groups.
\item[\rm(c)]
Jordan-H\"{o}lder Theorem:
Every series of contraction groups for $(G,\alpha)$ can be refined to
a composition series of contraction groups. Moreover,
any two composition series of contraction groups for $(G,\alpha)$
are isomorphic as series of contraction groups.
\end{itemize}
\end{thm}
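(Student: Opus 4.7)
The plan is to reduce to the classical Zassenhaus Lemma and Schreier/Jordan--H\"older arguments, using the closed product property (Corollary~\ref{cloprod}) to ensure all intermediate subgroups remain closed and $\alpha$-stable, and Corollary~\ref{precloprod} to upgrade algebraic isomorphisms of quotients to isomorphisms of contraction groups.

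For part~(a), I would invoke the decomposition $G = G_e \times G_{\td}$ from Theorem~A. Lemma~\ref{conncomps} produces a composition series $\{e\} = A_0 \lhd \cdots \lhd A_r = G_e$ on the connected factor, and \cite[Theorem~3.3]{GW} does the same on the totally disconnected factor, say $\{e\} = B_0 \lhd \cdots \lhd B_s = G_{\td}$. Splicing them yields
\[
\{e\} = A_0 \lhd \cdots \lhd A_r = G_e \lhd G_e \cdot B_1 \lhd \cdots \lhd G_e \cdot B_s = G,
\]
in which each $G_e \cdot B_j$ is closed and $\alpha$-stable (Corollary~\ref{cloprod}, with $G_e$ normalised by all of~$G$), and each successive factor is isomorphic to a factor of one of the two series above, hence simple.

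For part~(b), given two series of contraction groups for $(G,\alpha)$, I would run the classical Schreier refinement: between $G_{i-1}$ and $G_i$ insert the chain $G_{i,j} := G_{i-1}(G_i \cap H_j)$ for $j=0,\ldots,m$, and symmetrically for the $H_j$. Each intersection $G_i \cap H_j$ is a closed $\alpha$-stable subgroup, and Corollary~\ref{cloprod} promotes each product $G_{i-1}(G_i \cap H_j)$ and $(G_{i-1} \cap H_j)(G_i \cap H_{j-1})$ to a closed $\alpha$-stable subgroup, the normalising condition holding because $G_{i-1} \lhd G_i \supseteq G_i \cap H_j$ and similarly inside $G_i \cap H_j$. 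The classical Zassenhaus Lemma then produces algebraic isomorphisms
\[
G_{i,j}/G_{i,j-1} \;\cong\; (G_i \cap H_j)/((G_{i-1} \cap H_j)(G_i \cap H_{j-1})) \;\cong\; H_{j,i}/H_{j,i-1}.
\]
The main obstacle is promoting these to isomorphisms of contraction groups: each is obtained by composing restrictions of the canonical quotient maps, hence is automatically a continuous, $\alpha$-equivariant bijective group homomorphism between locally compact contraction groups. Corollary~\ref{precloprod} then forces it to be open, and therefore an isomorphism of contraction groups.

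For part~(c), I would first refine a given series of contraction groups to a composition series by applying~(b) to it together with the composition series from~(a); in the resulting common refinement of the composition series, simplicity of the factors forces every inserted term to coincide with one of its neighbours, so after removing duplicates the refinement of the given series has only simple factors and is thus a composition series refining it. Uniqueness in Jordan--H\"older is then immediate: by~(b) any two composition series admit isomorphic refinements, but refinements of composition series are trivial up to duplicates, so the two original composition series are themselves isomorphic as series of contraction groups.
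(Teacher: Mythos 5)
Your proposal is correct and takes essentially the same route as the paper: part~(a) is proved by the identical splicing of composition series for $G_e$ and $G_{\td}$, and for parts~(b) and~(c) the paper simply invokes the abstract Schreier--Zassenhaus--Jordan--H\"older result of \cite[Proposition~2.4]{GW} together with the closed product property of Corollary~\ref{cloprod}. The refinement machinery you write out explicitly (closed $\alpha$-stable products via Corollary~\ref{cloprod}, and upgrading the bijective continuous equivariant Zassenhaus isomorphisms to isomorphisms of contraction groups via Corollary~\ref{precloprod}) is exactly what that citation encapsulates.
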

\begin{proof}
(a) By \cite[Proposition~4.2]{Sie}, $G=G_e\times G_{\td}$ internally
for a totally disconnected, $\alpha$-stable closed normal subgroup
$G_{\td}$ of~$G$. By Lemma~\ref{conncomps}, $G_e$ has a composition series
$\{e\}=G_0\lhd G_1\lhd\cdots\lhd G_n=G_e$
of contraction groups. By \cite[Theorem~3.3]{GW}, $G_{\td}$ has a composition series
$\{e\}=H_0\lhd H_1\cdots\lhd H_m=G_{\td}$
of contraction groups.
Then
\[
\{e\}=G_0\lhd G_1\lhd\cdots\lhd G_n=G_e\times H_0\lhd
G_e\times H_1\lhd\cdots\lhd G_e\times H_m=G
\]
is a composition series of contraction groups for $(G,\alpha)$.

(b) and (c) holds by \cite[Proposition~2.4]{GW},
in view of (a) and the ``closed product property'' established in Corollary~\ref{cloprod}.\vspace{4mm}
\end{proof}
{\bf Contractive automorphisms of real and complex vector spaces}\\[2.3mm]
We briefly review some facts concerning contractive automorphisms
of real (or complex) finite-dimensional vector spaces.
\begin{la}\label{complxlin}
Let $\alpha\colon E\to E$
be a $\C$-linear automorphism of a finite-dimensional complex vector space~$E$.
Then the following conditions are equivalent:
\begin{itemize}
\item[\rm(a)]
$\alpha$ is contractive;
\item[\rm(b)]
$|\lambda|<1$ for all
eigenvalues $\lambda$ of~$\alpha$ in~$\C$;
\item[\rm(c)]
There exists a norm $\|.\|$ on~$E$
such that
$\alpha$ has operator norm $\|\alpha\|_{\op}<1$
with respect to~$\|.\|$.
\end{itemize}
\end{la}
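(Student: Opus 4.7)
\medskip
\noindent
\textbf{Proof proposal.}
The plan is to establish the cyclic chain of implications (c)$\Rightarrow$(a)$\Rightarrow$(b)$\Rightarrow$(c). The first two are standard and short; the substance lies in constructing a norm that witnesses~(c) from the spectral hypothesis~(b).

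First, (c)$\Rightarrow$(a): if $\theta:=\|\alpha\|_{\op}<1$ with respect to some norm~$\|.\|$, then $\|\alpha^n(x)\|\leq \theta^n\|x\|\to 0$ for each $x\in E$, so $\alpha^n(x)\to 0$ and $\alpha$ is contractive.

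Next, (a)$\Rightarrow$(b): if $\lambda\in\C$ is an eigenvalue with eigenvector $v\neq 0$, then $\alpha^n(v)=\lambda^n v$. Since $\alpha^n(v)\to 0$ by~(a), the scalars $\lambda^n$ must tend to~$0$, forcing $|\lambda|<1$.

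The main step is (b)$\Rightarrow$(c). Here I would invoke the Jordan normal form over~$\C$. Choose a basis $(e_1,\ldots,e_d)$ of~$E$ in which the matrix of~$\alpha$ is block diagonal with Jordan blocks $J_i=\lambda_i I+N_i$, where $N_i$ is the nilpotent superdiagonal shift on the block and $|\lambda_i|<1$ for every~$i$ by~(b). For a parameter $\ve>0$, rescale the basis vector at position~$k$ within each block by $\ve^{k-1}$; in this rescaled basis, the matrix of~$\alpha$ on a block of size~$r$ takes the form $\lambda_i I+\ve N_i$ (since conjugation by the diagonal matrix $\mathrm{diag}(1,\ve,\ve^2,\ldots,\ve^{r-1})$ turns the superdiagonal $1$'s into $\ve$'s). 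Now let $\|.\|$ be the norm on~$E$ whose coordinate expression in the rescaled basis is the $\ell^1$-norm (or any operator-friendly norm, e.g.\ $\ell^\infty$). Then the operator norm of each block is at most $|\lambda_i|+\ve$, whence
\[
\|\alpha\|_{\op}\leq\max_i\bigl(|\lambda_i|+\ve\bigr).
\]
Choosing $\ve>0$ with $\ve<1-\max_i|\lambda_i|$ (possible because the maximum is strictly less than~$1$) gives $\|\alpha\|_{\op}<1$, establishing~(c).

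The only delicate point will be the rescaling computation on a Jordan block, and that is routine: it is just conjugation by a diagonal matrix. Everything else is immediate from the elementary theory of matrix norms; no appeal to the spectral radius formula or to any result from the paper is needed.
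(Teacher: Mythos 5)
Your proof is correct and follows essentially the same route as the paper: the implications (c)$\Rightarrow$(a) and (a)$\Rightarrow$(b) are handled as immediate, and (b)$\Rightarrow$(c) is obtained via the Jordan form by rescaling the basis vectors of each block with powers of a small parameter $\ve$, which yields $\|\alpha\|_{\op}\leq|\lambda|+\ve<1$ for a suitable coordinate norm. The only cosmetic difference is that the paper works with the vectors $(\alpha-\lambda\id_E)^j(v)$ scaled by $\theta^{-j}$ and the max-norm, which is the same diagonal conjugation in different notation.
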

\begin{proof}
The implication ``(c)$\impl$(a)'' is trivial and also ``(a)$\impl$(b)'' (since
$\neg$(b)$\,\impl\!\neg$(a)). To complete the proof, it suffices to
prove the implication ``(b)$\impl$(c)'' in the case when matrix describing~$\alpha$
with respect to a suitable basis of~$E$ consists of a single Jordan block
(using the convention that ones appear below the diagonal).\footnote{In general,
$E$ is a direct sum $\bigoplus_{j=1}^nE_j$ of $\alpha$-stable vector subspaces
$E_j$ with this property.
Then $\|\alpha\|_{\op}<1$ with respect to the norm $\|.\|$ on~$E$
given by $\|x_1+\cdots+ x_n\|:=\max\{\|x_1\|_1,\ldots,\|x_n\|_n\}$
for $x_j\in E_j$, where $\|.\|_j$ is a norm on~$E_j$ with respect to which
$\|\alpha|_{E_j}\|_{\op}<1$.}
So, let $\lambda\in\C$, $k\in\N$ and $0\not=v\in E$ such that $(\alpha-\lambda\id_E)^k(v)=0$
and the vectors $v_j:=(\alpha-\lambda \id_E)^j(v)$ form a basis for~$E$
for $j\in\{0,1,\ldots, k-1\}$.
For $\theta>0$, also the vectors $\theta^{-j}v_j$ form a basis for~$E$
for $j\in\{0,1,\ldots,k-1\}$. Since
\[
\alpha\left(\sum_{j=0}^{k-1} x_j\theta^{-j}v_j\right)=\lambda x_0\theta^0 v_0+
\sum_{j=1}^{k-1}(\lambda x_j+\theta x_{j-1})\theta^{-j} v_j
\]
for $x_0,\ldots,x_{k-1}\in\C$, we have $\|\alpha\|_{\op}\leq|\lambda|+\theta$
with respect to the norm on~$E$ given by $\|\sum_{j=0}^{k-1}x_j\theta^{-j}v_j\|:=
\max\{|x_0|,\ldots,|x_{k-1}|\}$. Thus $\|\alpha\|_{\op}<1$
if we choose $\theta\in\;]0,1-|\lambda|[$.
\end{proof}
\begin{la}\label{reallin}
Let $\alpha\colon E\to E$ be an $\R$-linear automorphism of
a finite-dimensional real vector space~$E$.
The following conditions are equivalent:
\begin{itemize}
\item[\rm(a)]
$\alpha$ is contractive;
\item[\rm(b)]
The corresponding complex linear map $\alpha_\C\colon E_\C\to E_\C$ is contractive,
where $E_\C:=E\otimes_\R\C$ and $\alpha_\C:=\alpha\otimes\id_\C$;
\item[\rm(c)]
There exists a norm $\|.\|$ on~$E$
such that
$\alpha$ has operator norm $\|\alpha\|_{\op}<1$
with respect to~$\|.\|$.
\end{itemize}
\end{la}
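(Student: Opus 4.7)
The plan is to establish the cycle (c)$\impl$(a)$\impl$(b)$\impl$(c), reducing the real case to the complex case already handled in Lemma~\ref{complxlin}.

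The implication (c)$\impl$(a) is immediate: if $\|\alpha\|_{\op} \leq \theta < 1$ with respect to some norm $\|.\|$, then $\|\alpha^n(x)\| \leq \theta^n\|x\| \to 0$ for every $x \in E$.

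For (a)$\impl$(b), I would identify $E$ with the real subspace $E \otimes 1 \sub E_\C$, so that every $z \in E_\C$ can be written uniquely as $z = x + iy$ with $x, y \in E$, and $\alpha_\C(x+iy) = \alpha(x) + i\alpha(y)$. Then, for each $n \in \N$,
\[
\alpha_\C^n(z) = \alpha^n(x) + i\alpha^n(y),
\]
and both summands tend to~$0$ by (a); hence $\alpha_\C^n(z) \to 0$, proving (b).

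For (b)$\impl$(c), Lemma~\ref{complxlin} applied to $\alpha_\C$ furnishes a complex norm $\|.\|_\C$ on~$E_\C$ with $\|\alpha_\C\|_{\op} < 1$. Restricting $\|.\|_\C$ to the real subspace $E \sub E_\C$ yields a real norm $\|.\|$ on~$E$, and since $\alpha$ is the restriction of $\alpha_\C$ to this subspace we obtain
\[
\|\alpha(x)\| = \|\alpha_\C(x)\|_\C \leq \|\alpha_\C\|_{\op}\,\|x\|_\C = \|\alpha_\C\|_{\op}\,\|x\|
\]
for all $x \in E$, whence $\|\alpha\|_{\op} \leq \|\alpha_\C\|_{\op} < 1$. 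There is no real obstacle here beyond invoking the complex version; the argument is simply the standard passage to the complexification.
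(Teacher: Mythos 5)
Your proposal is correct and follows essentially the same route as the paper: (c)$\impl$(a) is immediate, (a)$\impl$(b) is obtained by viewing $E_\C$ as $E\oplus iE$ so that $\alpha_\C$ acts as $\alpha$ on each real summand, and (b)$\impl$(c) restricts the complex norm provided by Lemma~\ref{complxlin} to $E$. No gaps; this matches the paper's argument.
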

\begin{proof}
The implication ``(c)$\impl$(a)'' is trivial.

(a)$\impl$(b): Considering $E_\C$ as a real vector space, we have $E_\C=E\oplus iE \cong E\times E$
and $\alpha_\C$ corresponds to $\alpha\times \alpha$, which is contractive.

(b)$\impl(c)$: If (b) holds, then there exists a norm $\|.\|$ on $E_\C$ with respect to which
$\|\alpha_\C\|_{\op}<1$. Identify $x\in E$ with $x\otimes 1\in E_\C$.
Then $\|\alpha\|_{\op}<1$ with respect to the restriction of~$\|.\|$ to~$E$.\vspace{4mm}
\end{proof}
{\bf Classification of the simple locally compact contraction groups}\\[2.3mm]
A classification of the simple totally disconnected
contraction groups was obtained in \cite[Theorem~A and Proposition~6.3]{GW}.
The following theorem completes the picture for general
locally compact contraction groups.
Recall from the introduction that
$\Omega_\infty$ is the set of all monic irreducible polynomials
$f$ over~$\R$ such that $|\lambda|<1$ for all zeros $\lambda$ of~$f$
in~$\C$.
Given a polynomial $f\in \Omega_\infty$, we endow
$E_f:=\R[X]/f\R[X]$
with the $\R$-linear automorphism~$\alpha_f$ given by multiplication of
representatives by~$X$.\\[2.3mm]
We assume that the reader is familiar with the theory of
finitely generated modules over a principal ideal domain
(like $\R[X]$) and its relations to normal forms
for endomorphisms of finite-dimensional vector spaces
(see, e.g., \cite[Chapter~3]{Jac}). We now establish the classification
described in Theorem~F.\\[2.3mm]
{\bf Proof of Theorem~F.}
As $G=G_e\times G_{\td}$ internally and $(G,\alpha)$ is simple,
we must have $G_e=\{e\}$ or $G_{\td}=\{e\}$,
establishing the first assertion.
If $G$ is connected,
then $G$ is a nilpotent real Lie group (see Lemma~\ref{conncase}),
whence its centre $Z(G)$ is a non-trivial $\alpha$-stable closed normal
subgroup of~$G$ and thus $G=Z(G)$,
as $(G,\alpha)$ is assumed simple.
Hence~$G$ is abelian. As a consequence, the Lie algebra
$L(G)$ is abelian, i.e., $[x,y]=0$ for all $x,y\in L(G)$
(cf.\ Proposition~7 in \cite[Chapter III, \S9.3]{Bou}).
The BCH-multiplication on $L(G)$ therefore is the addition map,
\[
x*y=x+y\;\;\mbox{for all $x,y\in L(G)$.}
\] 
Thus $(G,\alpha)$ is isomorphic to $((L(G),+),L(\alpha))$
as a contraction group (see Lemma~\ref{conncase}).
We may hence assume that~$G$ is a finite-dimensional
real vector space~$E$ and $\alpha\colon E\to E$
a contractive linear automorphism. If $p\in\R[X]$ is the characteristic polynomial
of~$\alpha$, then $|\lambda|<1$ for all roots of~$p$ in~$\C$, by
Lemmas~\ref{reallin} and \ref{complxlin}.
Consider~$E$ as an $\R[X]$-module via $X.v=\alpha(v)$
for $v\in E$. Since $(E,\alpha)$ is a simple contraction group,
$p$ coincides with the minimal polynomial of~$\alpha$
and $\R[X]/p\R[X]\cong E$
is a simple $\R[X]$-module, whence~$p$ is irreducible. Thus $p\in\Omega_\infty$
and $(E,\alpha) \cong \R[X]/p\R[X] = (E_p,\alpha_p)$.\\[2.3mm]
Conversely, $(E_f,\alpha_f)$ is a contraction group for each
$f\in \Omega_\infty$, by Lemmas~\ref{reallin} and~\ref{complxlin}.
Since each closed $\alpha$-stable subgroup of~$E_f$ is a vector subspace
(cf.\ Lemmas~\ref{conncase} and~\ref{insidenil}), the contraction group $(E_f,\alpha_f)$ is simple
as $E_f=\R[X]/f\R[X]$ is a simple $\R[X]$-module
(since~$f$ is irreducible).
If $f,g\in\Omega_\infty$ and $\phi\colon (E_f,\alpha_f)\to(E_g,\alpha_g)$
is an isomorphism of contraction groups, then $\phi$ is linear (cf.\
Lemmas~\ref{conncase} and \ref{insidenil})
and hence an isomorphism of $\R[X]$-modules.
From
\[
\R[X]/f\R[X]= E_f\cong E_g = \R[X]/g\R[X]
\]
we deduce that $f=g$.\vspace{4mm}\,\Punkt

\noindent
{\bf The composition length}
\begin{defn}\label{clength}
Let $(G,\alpha)$ be a locally compact contraction group.
Theorem~\ref{JH}\,(c) shows that each composition series
$\{e\}=G_0\lhd G_1\lhd\cdots\lhd G_n=G$ of contraction groups for
$(G,\alpha)$ has the same length $n\in \N_0$.
We call~$n$ the \emph{composition length} of $(G,\alpha)$
and write $\ell(G):=n$.
\end{defn}
Two simple observations will be useful later.
\begin{la}\label{length-1}
If $(G,\alpha)$ is a locally compact contraction group
and $N\sub G$ an $\alpha$-stable closed normal subgroup, then
\begin{equation}\label{eqlength1}
\ell(G)=\ell(N)+\ell(G/N)
\end{equation}
if we endow $N$ with the contractive automorphism~$\alpha|_N$ and
$G/N$ with\linebreak
$\wb{\alpha}\colon xN\mto\alpha(x)N$.
\end{la}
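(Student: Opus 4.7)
The plan is to build a composition series for $G$ by concatenating one for $N$ with the lift of one for $G/N$, and then invoke the Jordan--H\"{o}lder part of Theorem~\ref{JH} to conclude that all composition series have the same length.

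More precisely, by Theorem~\ref{JH}\,(a), there exist composition series of contraction groups
\[
\{e\}=N_0\lhd N_1\lhd\cdots\lhd N_m=N\quad\text{and}\quad
\{e\}=\wb{H}_0\lhd \wb{H}_1\lhd\cdots\lhd \wb{H}_k=G/N,
\]
with $m=\ell(N)$ and $k=\ell(G/N)$, where $N$ carries $\alpha|_N$ and $G/N$ carries $\wb{\alpha}$. Let $q\colon G\to G/N$ denote the quotient morphism (which is continuous and equivariant, since $\wb{\alpha}\circ q=q\circ\alpha$), and set $H_j:=q^{-1}(\wb{H}_j)$ for $j\in\{0,\ldots,k\}$. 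Each $H_j$ is closed in~$G$ (as $\wb{H}_j$ is closed and~$q$ is continuous), normal in~$H_{j+1}$, and $\alpha$-stable (since $\alpha(H_j)=\alpha(q^{-1}(\wb{H}_j))=q^{-1}(\wb{\alpha}(\wb{H}_j))=q^{-1}(\wb{H}_j)=H_j$, using surjectivity of~$q$ and $\wb{\alpha}$-stability of $\wb{H}_j$). Note $H_0=N$ and $H_k=G$.

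Concatenating, we obtain the series of contraction groups
\[
\{e\}=N_0\lhd N_1\lhd\cdots\lhd N_m=N=H_0\lhd H_1\lhd\cdots\lhd H_k=G.
\]
The first~$m$ factors are the simple contraction groups $N_i/N_{i-1}$ by construction. For the remaining factors, the canonical map $H_j/H_{j-1}\to \wb{H}_j/\wb{H}_{j-1}$ induced by~$q$ is a bijective morphism of contraction groups, and hence an isomorphism by Corollary~\ref{precloprod} (applied to the morphism $H_j\to \wb{H}_j/\wb{H}_{j-1}$, whose kernel is~$H_{j-1}$, followed by passage to the quotient); thus $H_j/H_{j-1}\cong \wb{H}_j/\wb{H}_{j-1}$ as contraction groups and is in particular simple. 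Therefore the concatenated series is a composition series of contraction groups for $(G,\alpha)$ of length $m+k$.

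By Theorem~\ref{JH}\,(c), every composition series for $(G,\alpha)$ has the same length~$\ell(G)$, and hence $\ell(G)=m+k=\ell(N)+\ell(G/N)$. The only genuinely non-routine point is identifying the quotient $H_j/H_{j-1}$ with $\wb{H}_j/\wb{H}_{j-1}$ as contraction groups, but this is handled by the ``closed product property'' and openness of equivariant morphisms onto their images established in Corollary~\ref{precloprod}.
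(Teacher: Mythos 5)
Your proof is correct and follows essentially the same route as the paper's: concatenate a composition series for $(N,\alpha|_N)$ with the $q$-preimages of a composition series for $(G/N,\wb{\alpha})$ and invoke the Jordan--H\"{o}lder statement of Theorem~\ref{JH}. The paper states this in one line; you merely spell out the verification that the preimages are closed, $\alpha$-stable, and have the correct simple subquotients, which is exactly the intended argument.
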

\begin{proof}
Let $q\colon G\to G/N$ be the canonical quotient
map. Then a composition series for $(N,\alpha|_N)$
and the preimages in~$G$  under~$q$ of a composition series for $(G/N,\wb{\alpha})$
combine to a composition series for $(G,\alpha)$.
\end{proof}
\begin{la}\label{length-2}
If $\phi\colon G\to H$ is a morphism between locally compact
contraction groups $(G,\alpha)$ and $(H,\beta)$, then
\begin{equation}\label{eqlength2}
\ell(G)=\ell(\ker\phi)+\ell(\phi(G)).
\end{equation}
\end{la}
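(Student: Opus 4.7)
The plan is to reduce the claim directly to Lemma~\ref{length-1} applied to the $\alpha$-stable closed normal subgroup $N:=\ker\phi$ of $G$. First I would verify that $N$ is indeed a legitimate choice: it is closed (continuity of $\phi$) and normal (general nonsense), and it is $\alpha$-stable because equivariance gives $\phi(\alpha(x))=\beta(\phi(x))=e$ whenever $\phi(x)=e$, hence $\alpha(N)\subseteq N$, and applying the same reasoning to $\alpha^{-1}$ (which is also equivariant with respect to $\beta^{-1}$) yields $\alpha(N)=N$.

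Next, I would invoke Corollary~\ref{precloprod}: since $\phi$ is an equivariant morphism of locally compact contraction groups, $\phi(G)$ is a closed subgroup of $H$ and the co-restriction $\phi|^{\phi(G)}\colon G\to\phi(G)$ is an open continuous surjective homomorphism. In particular, $\phi|^{\phi(G)}$ is a quotient map, and it is obviously equivariant with respect to $\alpha$ on $G$ and $\beta|_{\phi(G)}$ on $\phi(G)$. Thus $\phi|^{\phi(G)}$ factors through a continuous equivariant bijective open homomorphism
\[
\bar\phi\colon (G/\ker\phi,\bar\alpha)\longrightarrow (\phi(G),\beta|_{\phi(G)}),
\]
i.e.\ an isomorphism of contraction groups.

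Finally, Lemma~\ref{length-1} applied to $(G,\alpha)$ and the $\alpha$-stable closed normal subgroup $N=\ker\phi$ yields
\[
\ell(G)=\ell(\ker\phi)+\ell(G/\ker\phi),
\]
and since isomorphic contraction groups have the same composition length (composition series transport across the isomorphism $\bar\phi$), we have $\ell(G/\ker\phi)=\ell(\phi(G))$, which gives the claimed identity.

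The argument has essentially no obstacle: the only non-trivial input is the closed-image and open-mapping statement from Corollary~\ref{precloprod}, which has already been established, together with Lemma~\ref{length-1}. Everything else is bookkeeping about equivariance passing to kernel and image.
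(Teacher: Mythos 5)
Your proposal is correct and follows the paper's own argument: the paper also deduces the identity from Lemma~\ref{length-1} via the isomorphism $\phi(G)\cong G/\ker\phi$ (which rests on Corollary~\ref{precloprod}). You merely spell out the routine verifications that the paper leaves implicit.
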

\begin{proof}
Since $\phi(G)\cong G/\ker\phi$, this follows from Lemma~\ref{length-1}.
\end{proof}
$\;$\pagebreak

\noindent
{\bf Composition length in locally {\boldmath pro-$p$} groups}
\begin{la}\label{mod-pro-p}
Let $(G,\alpha)$ be a locally compact contraction group such that
$G$ is locally pro-$p$.
Then the following holds:
\begin{itemize}
\item[{\rm(a)}]
If $G\not=\{e\}$,
then $\Delta(\alpha^{-1})=p^k$ for some $k\in\N$.
\item[{\rm(b)}]
If $G$ is a torsion group, then $\Delta(\alpha^{-1})=p^{\ell(G)}$,
where $\ell(G)$ is the composition length of $(G,\alpha)$.
Moreover, the order of each group element of~$G$ divides $p^{\ell(G)}$.
\end{itemize}
\end{la}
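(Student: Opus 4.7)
The plan is to establish (a) first by a direct calculation with the module formula in \eqref{formul-mod}, and then derive (b) by induction on the composition length, using Theorem~F (together with the classification of simple totally disconnected contraction groups from \cite{GW}) to identify the simple quotient at the top of a composition series.

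For (a), note first that $G$ is totally disconnected: any pro-$p$ compact open subgroup $V\sub G$ is totally disconnected and contains the connected component~$G_e$, so $G_e=\{e\}$. Hence \ref{inv-sub} provides a compact open subgroup $U\sub G$ with $\alpha(U)\sub U$, and by Lemma~\ref{unionprop} this $U$ is a pro-$p$-group. Since $\alpha$ is an automorphism, $\alpha(U)$ is also compact open, so it is open in~$U$, and the quotient $U/\alpha(U)$ is a finite quotient of a pro-$p$-group, hence a $p$-group. Thus $\Delta(\alpha^{-1})=[U:\alpha(U)]=p^k$ for some $k\in\N_0$ by~\eqref{formul-mod}. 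To rule out $k=0$, observe that $\alpha(U)=U$ would give $U=\bigcap_n\alpha^n(U)=\{e\}$ by compact contractivity (\ref{compacontra}), making $G$ discrete; but a discrete contraction group is trivial, contradicting $G\not=\{e\}$. So $k\geq 1$.

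For (b), I would induct on $n:=\ell(G)$. The case $n=0$ is $G=\{e\}$, which is trivial. For the inductive step, take a composition series $\{e\}=G_0\lhd\cdots\lhd G_n=G$ of contraction groups (Theorem~\ref{JH}(a)) and set $N:=G_{n-1}$. Then the quotient $(G/N,\wb{\alpha})$ is a simple locally compact contraction group; it is totally disconnected (as a Hausdorff quotient of the totally disconnected group~$G$), a torsion group (being a quotient of a torsion group), and locally pro-$p$ (the image of a pro-$p$ compact open subgroup of~$G$ is pro-$p$). By Theorem~F it is not connected, so by \cite[Theorem~A]{GW} it is isomorphic to $F^{(-\N)}\times F^{\N_0}$ with the right shift for some finite simple group~$F$; since $F^{\N_0}$ must be a pro-$p$-group, $F$ is a finite simple $p$-group, forcing $F\cong\bF_p$. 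Thus $G/N\cong\bF_p(\!(t)\!)$, whose module is $\Delta(\wb{\alpha}^{-1})=[\bF_p^{\N_0}:\alpha(\bF_p^{\N_0})]=p$, and whose exponent is~$p$.

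The module of an automorphism is multiplicative in extensions of locally compact groups, i.e., $\Delta(\alpha^{-1})=\Delta(\alpha|_N^{-1})\cdot\Delta(\wb{\alpha}^{-1})$ (this is standard, via the disintegration of Haar measure on~$G$ along~$N$). Applying the induction hypothesis to $(N,\alpha|_N)$, which is again locally pro-$p$ (any compact open pro-$p$ subgroup of~$G$ meets~$N$ in such a subgroup of~$N$) and a torsion group, we get $\Delta(\alpha|_N^{-1})=p^{\ell(N)}$. Combining with $\Delta(\wb{\alpha}^{-1})=p$ and Lemma~\ref{length-1} yields
\[
\Delta(\alpha^{-1})=p^{\ell(N)}\cdot p=p^{\ell(N)+1}=p^{\ell(G)}.
\]
For the order statement, given $g\in G$ the image $gN\in G/N\cong\bF_p(\!(t)\!)$ satisfies $(gN)^p=e$, so $g^p\in N$, and by induction $g^p$ has order dividing $p^{\ell(N)}$; hence $g$ has order dividing $p^{\ell(N)+1}=p^{\ell(G)}$.

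The only mildly delicate step is checking that the simple quotient $G/N$ is $\bF_p(\!(t)\!)$ rather than some other simple torsion contraction group, which needs both Theorem~F (to exclude the connected case) and the observation that local pro-$p$-ness passes to this quotient and forces the finite simple group~$F$ in the classification to be~$\bF_p$; everything else is bookkeeping with the module and the composition length.
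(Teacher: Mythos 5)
Your proposal is correct and takes essentially the same route as the paper: part (a) is the same computation with an $\alpha$-invariant compact open pro-$p$ subgroup and (\ref{formul-mod}), and part (b) rests, exactly as in the paper, on identifying the relevant composition factor(s) as $\bF_p(\!(t)\!)$ via the classification in \cite{GW} and on multiplicativity of the module, which the paper also takes as known (citing \cite{FaD}). The only difference is organizational — the paper multiplies the modules across the whole composition series at once, while you induct on the composition length using the top quotient — which is a cosmetic variation, not a different argument.
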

\begin{proof}
(a) Let $V\sub G$ be a compact open subgroup such that $\alpha(V)\sub V$
(see \ref{inv-sub}). Since $\bigcap_{n\in\N_0}\alpha^n(V)=\{e\}$ as a consequence
of~\ref{compacontra}, $\alpha(V)$ is a proper subgroup of~$V$.
Since~$V$ is a pro-$p$-group (see Lemma~\ref{unionprop}) and $\alpha(V)$ is open in~$V$,
we deduce that $\Delta(\alpha^{-1})=[V:\alpha(V)]=p^k$ for some
$k\in\N$ (see (\ref{formul-mod})).

(b) If $H:=\bF_p(\!(t)\!)$ and $\beta(z):=tz$ for $z\in H$, then $U:=\bF_p[\![t]\!]$
is a compact open subgroup of~$H$ and $\beta(U)=t\bF_p[\![t]\!]$
is a subgroup of index~$p$ in~$U$. By~(\ref{formul-mod}),
\begin{equation}\label{bitpiece}
\Delta(\beta^{-1})=[U:\beta(U)]=p.
\end{equation}
If $(G,\alpha)$ has a composition series~(\ref{serial}),
give $G_j/G_{j-1}$ the contractive automorphism~$\alpha_j$
taking $xG_{j-1}$ to $\alpha(x)G_{j-1}$. Since $G_j/G_{j-1}$ is a simple
contraction group, locally pro-$p$, and a torsion group,
the classification in \cite[Theorem~A]{GW} shows that $(G_j/G_{j-1},\alpha_j)\cong (H,\beta)$
with $H=\bF_p(\!(t)\!)$ as above.
Thus
\[
\Delta(\alpha^{-1})=\Delta((\alpha_1)^{-1})\cdots\Delta((\alpha_n)^{-1})
=\Delta(\beta^{-1})^n=p^n=p^{\ell(G)}
\]
indeed (cf.\ \cite[Proposition~III.13.20]{FaD}).
As each subquotient $G_j/G_{j-1}$ is a torsion group of finite exponent~$p$,
we deduce from (\ref{serial}) that~$G$ is a torsion group of exponent
dividing~$p^n=p^{\ell(G)}$.
\end{proof}
\section{Abelian contraction groups}\label{secab}
In this section, we prove the structure theorems
concerning locally compact abelian contraction groups
described in the introduction
(Theorems~D and~E).
In particular, we
classify those locally compact contraction groups
which are abelian torsion groups (Corollary~\ref{thetorso}).
Up to isomorphism,
there are only countably many.
By contrast, for each~$p$ one easily finds uncountable sets of
contraction groups which are abelian $p$-adic Lie groups
and pairwise non-isomorphic,
and likewise for contraction groups which are abelian real Lie groups
(see Example~\ref{uncoup}).
\begin{numba}\label{theabones}
Every locally compact contraction group $(G,\alpha)$
such that $G$ is an abelian $p$-adic Lie group
has an abelian $p$-adic Lie algebra $L(G)$, whence
the BCH-multiplication coincides with addition
and $(G,\alpha)$ is isomorphic to $(L(G),+)$
with the contractive linear automorphism $L(\alpha)$
(as a special case of~\ref{padcase}).
Likewise, every abelian connected locally compact contraction group $(G,\alpha)$
is a real Lie group whose Lie algebra $L(G)$ is abelian,
and $(G,\alpha)$ is isomorphic to $(L(G),+)$
endowed with the contractive linear automorphism~$L(\alpha)$
(cf.~\ref{conncase}).
\end{numba}
\begin{example}\label{uncoup}
For $a\in \Q_p$ of $p$-adic absolute value $|a|_p<1$,
consider the multiplication operator $\mu_a\colon\Q_p\to\Q_p$,
$z\mto az$. Then $(\Q_p,\mu_a)$ is an abelian contraction group.
If also $b\in\Q_p$ with $|b|_p<1$ and $\phi\colon (\Q_p,\mu_a)\to(\Q_p,\mu_b)$
is an isomorphism of contraction groups, then the homomorphism $\phi$ is
$\Q_p$-linear, by Lemma~\ref{insidenil}. Hence
\[
a\phi(1)=\phi(a1)=(\phi\circ\mu_a)(1)=(\mu_b\circ\phi)(1)=b\phi(1),
\]
entailing that $a=b$ and thus $(\Q_p,\mu_a)=(\Q_p,\mu_b)$.
Analogous arguments apply if $\Q_p$ is replaced with~$\R$.
\end{example}
For each $m\in \N$ and prime number~$p$, we write $F_{p^m}:=\Z/p^m\Z$
and endow $F_{p^m}(\!(t)\!)$
with the contractive automorphism $\alpha\colon z\mto tz$.
\begin{rem}\label{andthelengthis}
(a) Note that $(F_{p^m}(\!(t)\!),\alpha)$ has composition length~$m$
since
\[
\{0\}=p^m F_{p^m}(\!(t)\!)\lhd p^{m-1}F_{p^m}(\!(t)\!)\lhd \cdots \lhd p^0 F_{p^m}(\!(t)\!)
=F_{p^m}(\!(t)\!)
\]
is a composition series of $\alpha$-stable closed normal subgroups.\\[2.3mm]
[In fact,
$p^k\big(F_{p^m}(\!(t)\!)\big)$ is the set of all
$x=\sum_{i=i_0}^\infty x_it^i$ in $F_{p^m}(\!(t)\!)$ such that
$i_0\in\Z$ and
$x_i\in p^kF_{p^m}$ for all $i\geq i_0$.
As a consequence,
$p^{k-1}F_{p^m}(\!(t)\!)/p^kF_{p^m}(\!(t)\!)\cong \bF_p(\!(t)\!)$
for each $k\in \{1,\ldots,m\}$.]

(b) Let $x=\sum_{i=i_0}^\infty x_it^i\in F_{p^m}(\!(t)\!)$ with $i_0\in\Z$ and $x_i\in F_{p^m}$
for all $i\geq i_0$.
Let
$k\in\{0,1,\ldots, m\}$.
Then $p^kx=0$ if and only if $p^kx_i=0$ for all $i\geq i_0$,
which holds if and only if $x_i\in p^{m-k}F_{p^m}$. Thus
\begin{equation}\label{thetorp}
\{x\in F_{p^m}(\!(t)\!)\colon p^kx=0\}=p^{m-k}F_{p^m}(\!(t)\!).
\end{equation}
\end{rem}
\begin{la}\label{thegen}
Let $(G,\alpha)$ be a totally disconnected, locally compact contraction group
such that~$G$ is abelian. If
$x\in G$ is an element of prime power order~$p^m$ with $m\in\N$, then there is a unique
morphism of contraction groups
\[
F_{p^m}(\!(t)\!)\to G\;\;\mbox{taking $t^0$ to~$x$}.
\]
Its image $\langle x\rangle_\alpha$ is the smallest
$\alpha$-stable closed subgroup of~$G$ containing~$x$, and its co-restriction
\[
\theta_x\colon F_{p^m}(\!(t)\!)\to\langle x\rangle_\alpha
\]
is an isomorphism of contraction groups if we use $\alpha|_{\langle x\rangle_\alpha}$
on the right hand side. Notably, $\langle x\rangle_\alpha\cong F_{p^m}(\!(t)\!)$.
\end{la}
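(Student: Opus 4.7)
My plan is to build $\theta_x$ directly from the Laurent polynomial case by continuous extension, identify its image as $\langle x\rangle_\alpha$ by a universal-property argument, and prove injectivity by composition-length bookkeeping. I will write $G$ additively since $G$ is abelian.

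First, I would construct $\theta_x$ and check uniqueness. Since $x$ has order $p^m$, every $\alpha^i(x)$ also has order dividing $p^m$, so $a\cdot\alpha^i(x)\in G$ is unambiguously defined for $a\in F_{p^m}=\Z/p^m\Z$. On the dense subgroup $F_{p^m}[t,t^{-1}]$ of Laurent polynomials, set $\theta_x\bigl(\sum a_it^i\bigr):=\sum a_i\alpha^i(x)$; this is an $\alpha$-equivariant group homomorphism sending $t^0$ to $x$. To extend to $F_{p^m}(\!(t)\!)$, pick a compact open subgroup $V\sub G$ with $\alpha(V)\sub V$ (see \ref{inv-sub}); by \ref{compacontra} there exists $N_0$ with $\alpha^n(x)\in V$ for all $n\geq N_0$, hence $a\cdot\alpha^n(x)\in\alpha^{n-N_0}(V)$ for such $n$ and any $a\in F_{p^m}$. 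For $z=\sum_{i\geq i_0}a_it^i$, the partial sums $\theta_x\bigl(\sum_{i=i_0}^Na_it^i\bigr)$ differ pairwise by elements lying in $\alpha^{N+1-N_0}(V)$, which shrinks to $\{e\}$, so they form a Cauchy sequence that converges in the complete locally compact group $G$. The same estimate yields continuity. Uniqueness is immediate: any continuous $\alpha$-equivariant homomorphism $\phi$ with $\phi(t^0)=x$ must satisfy $\phi(t^i)=\alpha^i(x)$, hence agree with $\theta_x$ on Laurent polynomials, hence everywhere by density.

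Second, I would identify the image. By equivariance the image is $\alpha$-stable, by Corollary~\ref{precloprod} it is closed in $G$, and it contains $x=\theta_x(t^0)$. Conversely, any closed $\alpha$-stable subgroup containing $x$ contains every $\alpha^i(x)$, hence every finite $\Z$-linear combination, hence by closedness every $\theta_x(z)$. So the image equals $\langle x\rangle_\alpha$.

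Third—the main obstacle—I would prove injectivity via composition length. Note that $p^m\cdot\alpha^i(x)=0$, so the closed subgroup $\{y\in G:p^my=0\}$ contains all $\alpha^i(x)$, hence all of $\langle x\rangle_\alpha$. Thus $\langle x\rangle_\alpha$ is an abelian torsion group of exponent dividing $p^m$, in particular totally disconnected. Every compact subgroup is then profinite of exponent dividing $p^m$, hence a pro-$p$-group, so $\langle x\rangle_\alpha$ is locally pro-$p$. Applying Lemma~\ref{mod-pro-p}(b) to the contraction group $(\langle x\rangle_\alpha,\alpha|_{\langle x\rangle_\alpha})$, the order $p^m$ of $x$ divides $p^{\ell(\langle x\rangle_\alpha)}$, yielding $\ell(\langle x\rangle_\alpha)\geq m$. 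Since $\ell(F_{p^m}(\!(t)\!))=m$ by Remark~\ref{andthelengthis}, Lemma~\ref{length-2} applied to $\theta_x$ forces $\ell(\ker\theta_x)=0$ and thus $\ker\theta_x=\{0\}$. The co-restriction $\theta_x\colon F_{p^m}(\!(t)\!)\to\langle x\rangle_\alpha$ is then a continuous equivariant bijection, open by Corollary~\ref{precloprod}, hence an isomorphism of contraction groups. (A direct classification of the $\alpha$-stable closed subgroups of $F_{p^m}(\!(t)\!)$ would also yield injectivity, but the length argument is shorter and routes through the machinery already set up.)
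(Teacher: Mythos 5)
Your proposal is correct and follows essentially the same route as the paper: define the map as the convergent sum $\sum_i a_i\alpha^i(x)$ (the paper organizes the convergence via Lemma~\ref{egsum}, you via Cauchy partial sums and density, which amounts to the same estimate), identify the image as the smallest closed $\alpha$-stable subgroup containing $x$, and obtain injectivity from $m\leq\ell(\langle x\rangle_\alpha)\leq\ell(F_{p^m}(\!(t)\!))=m$ using Lemma~\ref{mod-pro-p}(b), Lemma~\ref{length-2} and Remark~\ref{andthelengthis}. Your explicit verification that $\langle x\rangle_\alpha$ is a torsion group of exponent dividing $p^m$ and hence locally pro-$p$ is a welcome detail that the paper leaves implicit.
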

\begin{proof}
Let~$+$ be the group operation on~$G$.
Let $U\sub G$ be a compact open subgroup such that $\alpha(U)\sub U$ (see \ref{inv-sub}).
Since $\alpha$ is contractive, we have $\alpha^k(x)\in U$ for some $k \in \N_0$.
After replacing $U$ with $\alpha^{-k}(U)$, we may assume that $x\in U$.
Then
\[
U\supseteq \alpha(U)\supseteq \alpha^2(U)\supseteq\cdots
\]
is a basis of identity neighbourhoods of~$G$ consisting of compact open subgroups
(cf.\ \ref{compacontra}).
We claim that a) the limit
\[
\phi_x(z):=\sum_{j=N}^\infty k_j \alpha^j(x)
\]
exists in~$G$ for all $N\in\Z$ and all
\begin{equation}\label{ballstra}
z=\sum_{j=N}^\infty k_jt^j\in F_{p^m}(\!(t)\!)
\end{equation}
with $k_j\in\{0,1,\ldots, p^m-1\}$;
and b) the function $\phi_x\colon F_{p^m}(\!(t)\!)\to G$, $z\mto\phi_x(z)$
is continuous and a homomorphism. As the sets
$V_N$ of all~$z$ as in~(\ref{ballstra}) form an open cover of $F_{p^m}(\!(t)\!)$
for $N\in\Z$ consisting of nested compact open subgroups, it suffices to fix $N\in\Z$ and show
c) that the above
limit exists for $z\in V_N$; and d) that $\phi_x|_{V_N}$ is a continuous homomorphism.
Note that Lemma~\ref{egsum} can be applied
using the continuous functions
\[
f_n\colon V_N\to \alpha^{N+n}(U)\sub G,\quad \sum_{j=N}^\infty
k_jt^j\mto k_{N+n}\alpha^{N+n}(x).\vspace{-1mm}
\]
Hence~c) holds and $\phi_x|_{V_N}$ is continuous. Being a pointwise limit of
group homomorphisms, also~$\phi_x|_{V_N}$ is a homomorphism.
By construction, we have
\[
\phi_x(tz)=\alpha(\phi_x(z))\;\,\mbox{for all $z\in F_{p^m}(\!(t)\!)$.}
\]
Thus $\phi_x$ is a morphism of contraction groups, entailing that its image
$\langle x\rangle_\alpha$ is an $\alpha$-stable closed subgroup of~$G$.
Since the set $F_{p^m}^{\,(\Z)}$ of finitely supported sequences is
dense in $F_{p^m}(\!(t)\!)$, the elements
$\alpha^n(x)$ with $n\in\Z$ generate a dense subgroup of $\langle x\rangle_\alpha$.
As a consequence, $\langle x\rangle_\alpha$ is the smallest $\alpha$-stable
closed subgroup of~$G$ which contains~$x$. To complete the proof, note that
\[
m\leq \ell(\langle x\rangle_\alpha)\leq \ell(F_{p^m}(\!(t)\!))=m
\]
by Lemma~\ref{mod-pro-p}\,(b), \ref{length-2}, and Remark~\ref{andthelengthis}.
Thus $\ell(\langle x\rangle_\alpha)=\ell(F_{p^m}(\!(t)\!))$,
whence $\ell(\ker\theta_x)=0$ (by \ref{length-2}) and thus $\ker\theta_x=\{0\}$.
By \ref{nicemorph}, $\theta_x$ is an isomorphism of topological groups.
\end{proof}
\begin{la}\label{lin-latt}
For each prime $p$ and $m\in\N_0$, the lattice of $\alpha$-stable closed 
subgroups of $F_{p^m}(\!(t)\!)$ is totally ordered; it has the form
\[
\{0\}=p^m F_{p^m}(\!(t)\!)<p^{m-1}F_{p^m}(\!(t)\!)<\cdots< p^0 F_{p^m}(\!(t)\!)=F_{p^m}(\!(t)\!).
\]
If $k\in\{0,1,\ldots, m\}$ and $x\in F_{p^m}(\!(t)\!)$ is an element of order~$p^k$, then
$\langle x\rangle_\alpha=p^{m-k} F_{p^m}(\!(t)\!)$.
\end{la}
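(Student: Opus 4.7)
The plan is to establish the description of $\langle x\rangle_\alpha$ first; the total ordering of the lattice will then fall out by picking an element of maximal order in any candidate subgroup.

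To identify $\langle x\rangle_\alpha$ for $x$ of order $p^k$, I would argue via composition lengths rather than by direct computation. Equation~(\ref{thetorp}) places~$x$ inside the closed $\alpha$-stable subgroup $p^{m-k}F_{p^m}(\!(t)\!)$, so by minimality $\langle x\rangle_\alpha\subseteq p^{m-k}F_{p^m}(\!(t)\!)$. Lemma~\ref{thegen} gives $\langle x\rangle_\alpha\cong F_{p^k}(\!(t)\!)$, which by Remark~\ref{andthelengthis}(a) (applied with $m$ replaced by~$k$) has composition length~$k$. On the other hand, $p^{m-k}F_{p^m}(\!(t)\!)$ carries the composition series
\[
\{0\}=p^mF_{p^m}(\!(t)\!)\lhd p^{m-1}F_{p^m}(\!(t)\!)\lhd\cdots\lhd p^{m-k}F_{p^m}(\!(t)\!)
\]
with simple factors $\bF_p(\!(t)\!)$ (again by Remark~\ref{andthelengthis}(a)), and so also has composition length~$k$. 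Applying Lemma~\ref{length-1} to the inclusion $\langle x\rangle_\alpha\lhd p^{m-k}F_{p^m}(\!(t)\!)$ will then force the quotient to have composition length~$0$, hence to be trivial, which yields the desired equality.

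For the lattice statement, I would take an arbitrary non-trivial $\alpha$-stable closed subgroup~$H$. Since $F_{p^m}(\!(t)\!)$ has exponent dividing~$p^m$, the orders of the non-zero elements of~$H$ lie in the finite set $\{p,p^2,\ldots,p^m\}$; let $p^k$ be the largest such order, attained by some $x\in H$. By the previous paragraph, $p^{m-k}F_{p^m}(\!(t)\!)=\langle x\rangle_\alpha\subseteq H$. Conversely, maximality of~$k$ forces $p^ky=0$ for every $y\in H$, so $y\in p^{m-k}F_{p^m}(\!(t)\!)$ by~(\ref{thetorp}). Hence $H=p^{m-k}F_{p^m}(\!(t)\!)$, and together with $H=\{0\}$ this exhausts the possibilities, yielding the asserted chain.

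The main obstacle is precisely the step $\langle x\rangle_\alpha=p^{m-k}F_{p^m}(\!(t)\!)$; attempting to verify this directly from the power-series formula for $\phi_x$ constructed in the proof of Lemma~\ref{thegen} would be notationally cumbersome, but the composition-length bookkeeping provided by Lemma~\ref{length-1} dispatches it with no computation.
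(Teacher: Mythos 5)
Your argument is correct and matches the paper's proof in all essentials: both identify $\langle x\rangle_\alpha$ inside $p^{m-k}F_{p^m}(\!(t)\!)$ via~(\ref{thetorp}), invoke Lemma~\ref{thegen} and Remark~\ref{andthelengthis} to see that both groups have composition length~$k$, and use Lemma~\ref{length-1} to force the quotient to be trivial, then pin down an arbitrary $\alpha$-stable closed subgroup $H$ by an element of maximal order (equivalently, the paper's minimal $k$ with $p^kH=\{0\}$). The only cosmetic difference is that you prove the statement about $\langle x\rangle_\alpha$ first and deduce the chain afterwards, while the paper runs both steps in one pass.
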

\begin{proof}
Let $H$ be an $\alpha$-stable closed subgroup of $F_{p^m}(\!(t)\!)$
and $k\in\{0,\ldots,m\}$ be minimal with $p^kH=\{0\}$.
Let $x\in H$ be an element of order~$p^k$.
Then
\[
\langle x\rangle_\alpha \leq H\leq p^{m-k} F_{p^m}(\!(t)\!),
\]
using Remark~\ref{andthelengthis}\,(b).
As $\langle x\rangle_\alpha\cong F_{p^k}(\!(t)\!)$ (see Lemma~\ref{thegen})
and $p^{m-k}F_{p^m}(\!(t)\!)\cong F_{p^k}(\!(t)\!)$ both have composition length
$k$ (see Remark~\ref{andthelengthis}\,(a)), \ref{length-1} shows that
\[
p^{m-k}F_{p^m}(\!(t)\!)/\langle x\rangle_\alpha=\{0\}
\]
as it has composition length~$0$. Thus $\langle x\rangle_\alpha=H=p^{m-k}F_{p^m}(\!(t)\!)$.\vspace{.7mm}
\end{proof}
{\bf Proof of Theorem~D: Existence.}
By Theorem~A(c),
\[
G=\tor_{p_1}(G)\times\cdots \times \tor_{p_m}(G)
\]
internally as a contraction group
for certain primes $p_1<\cdots<p_m$, such that $\tor_p(G)\not=\{e\}$
for all $p\in\{p_1,\ldots, p_m\}$.
If we can show that $\tor_{p_k}(G)$ is isomorphic to $A_k^{(-\N)}\times A_k^{\N_0}$
with the right shift for a finite abelian group~$A_k$,
then~$G$ is isomorphic to $F^{(-\N)}\times F^{\N_0}$
with the right shift for the finite abelian group $F:=A_1\times\cdots\times A_m$.
After replacing $G$ with $\tor_{p}(G)$ for $p\in\{p_1,\ldots, p_m\}$, we may therefore assume now that
$G$ is a non-trivial $p$-group and locally pro-$p$ for some prime~$p$.
We show by induction on $\ell(G)\in \N$ 
that $G\cong F^{(-\N)}\times F^\N$ for an abelian group~$F$ of order
$|F|=p^{\ell(G)}$. If $\ell(G)=1$, then $(G,\alpha)$ is a simple totally disconnected
contraction group which is a torsion group, abelian and locally pro-$p$,
whence $(G,\alpha)$ is isomorphic to $\bF_p^{(-\N)}\times\bF_p^{\N_0}$
by the classification in \cite[Theorem~A]{GW}, and the assertion holds.
If $\ell(G)>1$, pick an element $x\in G$ of maximal order $p^k$.
Then
\[
\langle x\rangle_\alpha\cong F_{p^k}(\!(t)\!)\cong F_{p^k}^{(-\N)}\times F_{p^k}^{\N_0}
\]
by Lemma~\ref{thegen}, and $k\leq\ell(G)$ by \ref{length-1} and Remark~\ref{andthelengthis}\,(a).
Let $q\colon G\to G/\langle x\rangle_\alpha$ be the canonical quotient morphism
and endow $G/\langle x\rangle_\alpha$ with the contractive automorphism
$\wb{\alpha}$ determined by $\wb{\alpha}(q(y))=q(\alpha(y))$ for $y\in G$.
If $k=\ell(G)$, then $\langle x\rangle_\alpha=G$ is a shift on a restricted power of $F_{p^k}$,
where $|F_{p^k}|=p^k=p^{\ell(G)}$.
If $k<\ell(G)$, then
$G/\langle x\rangle_\alpha$ has composition length $\ell(G)-k$ (see \ref{length-1}),
whence
\[
G/\langle x\rangle_\alpha\cong A^{(-\N)}\times A^{\N_0}
\]
with the right shift for an abelian group~$A$ of order $|A|=p^{\ell(G)-k}$,
by the inductive hypothesis.
The Structure Theorem for Finite Abelian Groups (cf.\ \cite[4.2.6]{Rob} or \cite[Corollary 10.22]{Rot})
shows that
\[
A\cong F_{p^{k_1}}\times \cdots \times F_{p^{k_n}}
\]
for some $n\in\N$ and $k_1,\ldots, k_n\in\N$ such that $k_1+\cdots+k_n=\ell(G)-k$.
Thus
\[
A^{(-\N)}\times A^{\N_0}\cong \bigoplus_{j=1}^n F_{p^{k_j}}(\!(t)\!).\vspace{-1mm}
\]
As a consequence, there exist $y_1,\ldots,y_n\in G/\langle x\rangle_\alpha$
such that
\[
G/\langle x\rangle_\alpha
=\langle y_1\rangle_{\wb{\alpha}}\times\cdots\times\langle y_n\rangle_{\wb{\alpha}}
\]
internally and $F_{p^{k_j}}(\!(t)\!)\cong \langle y_j\rangle_{\wb{\alpha}}$
for all $j\in\{1,\ldots,n\}$,
by means of isomorphisms $\theta_{y_j}$ taking $t^0$ to~$y_j$,
as described in Lemma~\ref{thegen}.\\[2.3mm]
We claim that, for each $j\in \{1,\ldots,n\}$,
there exists $x_j\in G$
such that $q(x_j)=y_j$ and $x_j$ has order $p^{k_j}$.
If this is true, then
\[
\sigma_j:=\theta_{x_j}\circ \theta_{y_j}^{-1}\colon \langle y_j\rangle_{\wb{\alpha}}\to
\langle x_j\rangle_\alpha
\]
is a morphism of contraction groups for $j\in\{1,\ldots,n\}$,
whence also
\[
\sigma\colon G/\langle x\rangle_\alpha\to G,\quad g_1+\cdots +g_n\mto \sigma_1(g_1)+\cdots+\sigma_n(g_n)
\]
for $(g_1,\ldots, g_n)\in \langle y_1\rangle_{\wb{\alpha}}\times\cdots\times\langle y_n
\rangle_{\wb{\alpha}}$
is a morphism of contraction groups. Then $q\circ\sigma$ is a morphism
of contraction groups which takes $y_j$ to itself and hence
coincides with the identity map on $\langle y_j\rangle_{\wb{\alpha}}$,
for each $j\in\{1,\ldots,n\}$. Thus
\[
q\circ \sigma=\id
\]
on $G/\langle x\rangle_\alpha$, entailing that
$G=\im(\sigma)\times \ker(q)=\langle x_1\rangle_\alpha\times\cdots\times\langle x_n\rangle_\alpha
\times \langle x\rangle_\alpha$
internally as a topological group (and a contraction group).
Thus $G\cong F^{(-\N)}\times F^{\N_0}$ with
$F:=A\times F_{p_k}$.\\[2.3mm]
To prove the claim, pick $z_j\in G$ such that $q(z_j)=y_j$.
Then $z_j$ has order $p^{\ell_j}$ with $k_j\leq\ell_j\leq k$, and
$p^{k_j}z_j\in \langle x\rangle_\alpha$ is an element of order $p^{\ell_j-k_j}$
and hence contained in
\[
p^{k-\ell_j+k_j}\langle x\rangle_\alpha,
\]
by Lemma~\ref{lin-latt}.
Thus $p^{k_j}z_j=p^{k-\ell_j+k_j}r_j$
for some $r_j\in \langle x\rangle_\alpha$
and thus $p^{k_j}z_j=p^{k_j}(p^{k-\ell_j}r_j)$
with $k-\ell_j\geq 0$.
Then $x_j:=z_j-p^{k-\ell_j}r_j$ is an element such that $q(x_j)=q(z_j)=y_j$
(whence its order is at least $p^{k_j}$)
and $p^{k_j}x_j=0$, whence $x_j$ has order~$p^{k_j}$.\\[2.3mm]
The uniqueness assertion will be proved at the end of this section.\vspace{2mm}\,\Punkt

\noindent
{\bf Proof of Theorem~E.}
Existence:
By Theorem~A, there are prime numbers $p_1<\cdots < p_n$
and
$p$-adic Lie groups $G_p$ for $p\in\{p_1,\ldots,p_n\}$ which are $\alpha$-stable closed
subgroups of~$G$ such that
\[
G=G_e\times G_{p_1}\times\cdots\times G_{p_n}\times \tor(G);
\]
moreover, $\tor(G)\cong F^{(-\N)}\times F^{\N_0}$ for a finite abelian group~$F$,
by Theorem~D. Abbreviate $G_\infty:=G_e$. If $\beta$ is an endomorphism
of a finite-dimensional vector space~$E$ over a field~$\K$,
let us write $(E,\beta)$ for~$E$, endowed with the $\K[X]$-module
structure determined by $X.v=\beta(v)$ for $v\in E$.
Let $p\in\{p_1,\ldots,p_n,\infty\}$.
In view of \ref{theabones}, the Structure Theorem for Finitely Generated Modules
over Principal Ideal Domains shows that
\begin{equation}\label{preformul}
(G_p,\alpha|_{G_p})\cong \bigoplus_f\bigoplus_{n\in\N}
(E_{f^n},\alpha_{f^n})^{\mu(p,f,n)},\vspace{-1mm}
\end{equation}
where $f$ ranges through the set of all monic irreducible
polynomials with coefficients in $\Q_p$ and $\mu(p,f,n)\not=0$
for only finitely many $(f,n)$. As each $\alpha_{f^n}$ with $\mu(p,f,n)>0$ is contractive
(like $\alpha|_{G_p}$), we must have $f\in\Omega_p$ for such $(p,f,n)$
(see Lemmas~\ref{reallin} and \ref{complxlin}).
We may therefore assume that $f\in\Omega_p$ in~(\ref{preformul}).
For each prime $p$ such that $p\not\in\{p_1,\ldots, p_n\}$, we set $\mu(p,f,n):=0$
for all $f\in\Omega_p$ and $n\in\N$.
Finally, by the Structure Theorem for Finite Abelian Groups
(cf.\ \cite[4.2.6]{Rob} or \cite[Corollary 10.22]{Rot}),
\[
F\cong \bigoplus_{p\in\bP}\bigoplus_{n\in\N}(F_{p^n})^{\nu(p,n)},\vspace{-1mm}
\]
where $\nu(p,n)\in\N_0$ is non-zero for only finitely many $(p,n)$.
Hence
\[
F^{({-\N})}\times F^{\N_0}\cong \bigoplus_{p\in\bP}\bigoplus_{n\in\N}
(F_{p^n}(\!(t)\!))^{\nu(p,n)},\vspace{-1mm}
\]
showing that $(G,\alpha)$ is isomorphic to
a direct sum of the form~($*$) asserted in~Theorem~E.
Conversely, every group of the form~($*$)
described in Theorem~E is a locally compact contraction group
(using Lemmas~\ref{reallin}, \ref{complxlin},
and \cite[Lemma~3.4 and its proof]{Wan}).
\\[2.3mm]
Uniqueness: If ($*$) holds, then $(G_e,\alpha|_{G_e})$ is isomorphic to the connected component
$\bigoplus_{f\in\Omega_\infty}\bigoplus_{n\in\N}(E_{f^n},\alpha_{f^n})^{\mu(\infty,f,n)}$
and uniquely determines the integers $\mu(\infty,f,n)$ by uniqueness
of the $\mu(\infty,f,n)$ in~(\ref{preformul}).
For each prime $p$ such that $p\not\in\{p_1,\ldots, p_n\}$, we must have $\mu(p,f,n)=0$
for all $f\in\Omega_p$ and $n\in\N$, by uniqueness in Theorem~A(b).
For $p\in\{p_1,\ldots,p_n\}$,
uniqueness in Theorem~A(b) implies that the contraction group $(G_p,\alpha|_{G_p})$ is isomorphic
to $\bigoplus_{f\in\Omega_p}\bigoplus_{n\in\N}(E_{f^n},\alpha_{f^n})^{\mu(p,f,n)}$
and uniquely determines the $\mu(p,f,n)$ by uniqueness
of the $\mu(p,f,n)$ in~(\ref{preformul}).
Finally, if ($*$) holds then
\begin{equation}\label{henceget}
(\tor_p(G),\alpha|_{\tor_p(G)})\cong \bigoplus_{n\in\N}
(F_{p^n}(\!(t)\!),\alpha_{p^n})^{\nu(p,n)}\vspace{-1mm}
\end{equation}
for each prime number~$p$. We may therefore assume that $G=\tor_p(G)$ to establish
uniqueness of the $\nu(p,n)$ for $n\in\N$ and a given prime number~$p$.
For $m\in\N$, let~$T_m$ be the $\alpha$-stable closed subgroup of~$G$ consisting
of elements $x\in G$ whose order divides~$p^m$.
Then~$T_m/T_{m-1}$ is trivial if~$m$ is greater than the largest~$n$ such that $\nu(p,n)\ne0$ and is isomorphic to $(F_{p}(\!(t)\!),\alpha_{p})^{\nu(p,n)}$ if~$m$ equals this value. This isomorphism determines~$\nu(p,n)$ uniquely in this case. 
For smaller values of~$m$, we have
\begin{equation}
S_{m}/S_{m-1}\cong \bigoplus_{n\geq m}
(F_{p}(\!(t)\!),\alpha_{p})^{\nu(p,n)},
\end{equation}
and~$\nu(p,m)$ may thus be determined once~$\nu(p,n)$ is known for all~$n>m$.
\,\vspace{2.3mm}\Punkt

\noindent
The following is immediate from Theorem~E and the preceding discussion.
\begin{cor}\label{thetorso}
Endowed with the right shift $\alpha_{p^n}$ in each component,
\[
\bigoplus_{p\in\bP}\bigoplus_{n\in\N}(\bF_{p^n}(\!(t)\!),\alpha_{p^n})^{\nu(p,n)}
\]
is a locally compact abelian torsion contraction group
for each family
\[
(\nu(p,n))_{(p,n)\in\bP\times\N}\in\N_0^{(\bP\times\N)}.
\]
Any locally compact abelian torsion contraction group
is isomorphic to a contraction group of the preceding form, for
a uniquely determined family $(\nu(p,n))_{(p,n)\in\bP\times\N}$.
Notably, there are only countably many locally compact abelian
contraction groups which are torsion groups, up to isomorphism.\,\vspace{1mm}\Punkt
\end{cor}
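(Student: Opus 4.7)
The plan is to derive this corollary as a direct consequence of Theorem~E, specialised to the torsion case. Let $(G,\alpha)$ be a locally compact abelian contraction group which is a torsion group. I would apply Theorem~E to $(G,\alpha)$ to obtain an isomorphism of the form $(*)$ with uniquely determined exponents $\mu(p,f,n)$ and $\nu(p,n)$. The task then reduces to showing that the hypothesis ``$G$ is a torsion group'' forces every $\mu(p,f,n)$ to be zero.

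For this, observe that each summand $(E_{f^n},\alpha_{f^n})$ appearing in $(*)$ is, as an abelian group, a finite-dimensional vector space over $\Q_p$ (for $p\in\bP$) or over $\R$ (for $p=\infty$), so it is torsion-free and non-trivial whenever $f\in\Omega_p$ and $n\in\N$. Since the direct sum in~$(*)$ is internal as a topological group, any nonzero $\mu(p,f,n)$ would produce a torsion-free, non-trivial direct summand in~$G$, contradicting the assumption that $G$ is a torsion group. (Equivalently, one can invoke the uniqueness parts of Theorems~A and~D directly: the connected part $G_e$ and every $p$-adic Lie factor $G_p$ in the decomposition of Theorem~A must vanish, because each is torsion-free by Lemmas~\ref{conncase} and~\ref{padcase}; hence $G=\tor(G)$ and only the $\nu(p,n)$-terms survive.) Thus $(*)$ collapses to the claimed form
\[
(G,\alpha)\cong \bigoplus_{p\in\bP}\bigoplus_{n\in\N}(F_{p^n}(\!(t)\!),\alpha_{p^n})^{\nu(p,n)},
\]
and the uniqueness of the family $(\nu(p,n))_{(p,n)\in\bP\times\N}$ is inherited from the uniqueness clause of Theorem~E. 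Conversely, every such direct sum is a locally compact abelian contraction group by the converse part of Theorem~E, and it is a torsion group since each summand $F_{p^n}(\!(t)\!)$ has exponent dividing $p^n$.

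Finally, the countability statement is a set-theoretic afterthought: the parameters range over the set $\N_0^{(\bP\times\N)}$ of finitely supported functions $\bP\times\N\to\N_0$, which is a countable union of finite sets (grouped by the finite support) and hence countable. The only step requiring any thought is ruling out the $\mu$-contributions; everything else is bookkeeping on top of Theorem~E, so I do not expect a genuine obstacle.
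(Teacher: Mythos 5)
Your proposal is correct and follows essentially the same route as the paper, which deduces the corollary directly from Theorem~E (and its uniqueness discussion): the only detail to supply is that the torsion hypothesis kills all $\mu(p,f,n)$, which you handle correctly via the torsion-freeness of the summands $E_{f^n}$. The uniqueness, converse, and countability arguments are exactly the intended bookkeeping.
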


\noindent
{\bf Proof of Theorem~D, completed: Uniqueness.}
Assume that $(G,\alpha)$ is isomorphic to $F^{(-\N)}\times F^{\N_0}$
with the right shift for a finite abelian group~$F$.
By the Structure Theorem for Finite Abelian Groups,
\[
F\, \cong\, \bigoplus_{p\in\bP}\bigoplus_{n\in\N}(F_{p^n})^{\nu(p,n)}\vspace{-1mm}
\]
for $\nu(p,n)\in\N_0$ which are non-zero for only finitely many $(p,n)$.
Then
\begin{equation}\label{wildeter}
(G,\alpha)\, \cong \, \bigoplus_{p\in\bP}\bigoplus_{n\in\N}(F_{p^n}(\!(t)\!))^{\nu(p,n)}\vspace{-1mm}
\end{equation}
as a contraction group. By Theorem~E, the $\nu(p,n)$ in~(\ref{wildeter})
are uniquely determined by $(G,\alpha)$. Hence
$F$ is determined up to isomorphism.\,\Punkt
\section{\!\!Two auxiliary results concerning extensions}\label{secauxi}
We record two observations concerning extensions, for later use.
\begin{la}\label{possible-c}
Let $(A,\alpha_A)$, $(\widehat{G},\wh{\alpha})$ and $(G,\alpha)$
be totally disconnected, locally compact contraction groups. Let
\[
\{e\}\to A\stackrel{\iota}{\to} \wh{G}\stackrel{q}{\to} G \to \{e\}
\]
be an extension of contraction groups such that $(A,\alpha_A)$
and $(G,\alpha)$ are simple contraction groups
and both~$A$ and~$G$ are abelian. If~$\wh{G}$ has non-trivial centre,
then
$\wh{G}$ is abelian
or $Z(\wh{G})=\iota(A)$ and~$G$ is $2$-step nilpotent.
\end{la}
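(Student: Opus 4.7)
The plan is to exploit the simplicity of $A$ and $G$ by analyzing how the centre $Z(\wh{G})$ sits in the extension. The centre is automatically a closed normal subgroup, and since $\wh{\alpha}$ is an automorphism it preserves the centre, so $(Z(\wh{G}),\wh{\alpha}|_{Z(\wh{G})})$ is itself a locally compact contraction group. Apply Corollary~\ref{precloprod} to the morphism $q|_{Z(\wh{G})}\colon Z(\wh{G})\to G$ to conclude that $q(Z(\wh{G}))$ is a closed $\alpha$-stable subgroup of~$G$. Since $(G,\alpha)$ is simple, there are only two possibilities: $q(Z(\wh{G}))=\{e\}$ or $q(Z(\wh{G}))=G$.

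First I would handle the case $q(Z(\wh{G}))=\{e\}$. Then $Z(\wh{G})\sub\ker q=\iota(A)$, so $Z(\wh{G})$ is a closed $\wh{\alpha}$-stable subgroup of $\iota(A)$; transporting along the isomorphism of contraction groups $\iota\colon(A,\alpha_A)\to(\iota(A),\wh{\alpha}|_{\iota(A)})$ gives a closed $\alpha_A$-stable subgroup of~$A$. By simplicity of $(A,\alpha_A)$ together with the hypothesis $Z(\wh{G})\ne\{e\}$, this forces $Z(\wh{G})=\iota(A)$. But then $\wh{G}$ is a central extension of the abelian group~$G$ by the abelian group~$A$, hence $2$-step nilpotent; this yields the second alternative in the conclusion.

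Next I would treat the case $q(Z(\wh{G}))=G$. Given any $\wh{g}\in\wh{G}$, surjectivity produces $z\in Z(\wh{G})$ with $q(z)=q(\wh{g})$, whence $\wh{g}=z\,a$ for some $a\in\iota(A)$. Therefore $\wh{G}=Z(\wh{G})\cdot\iota(A)$. Now for $\wh{g}_i=z_ia_i$ ($i=1,2$) with $z_i\in Z(\wh{G})$ and $a_i\in\iota(A)$, centrality of the $z_i$ and commutativity of $\iota(A)$ (inherited from~$A$) give
\[
\wh{g}_1\wh{g}_2=z_1z_2a_1a_2=z_2z_1a_2a_1=\wh{g}_2\wh{g}_1,
\]
so $\wh{G}$ is abelian, giving the first alternative.

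The argument is essentially bookkeeping once one knows that $q(Z(\wh{G}))$ is closed; the only non-trivial input is that closedness, which is why Corollary~\ref{precloprod} (and hence Theorem~G) is invoked. I do not expect any obstacle beyond ensuring that $(Z(\wh{G}),\wh{\alpha}|_{Z(\wh{G})})$ genuinely qualifies as a locally compact contraction group so that the corollary applies.
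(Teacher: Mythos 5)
Your proof is correct and takes essentially the same route as the paper's: both consider $q(Z(\wh{G}))$, use closedness of the image of a morphism of contraction groups together with simplicity of $(G,\alpha)$ to split into the two cases, and then invoke simplicity of $(A,\alpha_A)$ (giving $Z(\wh{G})=\iota(A)$ and $2$-step nilpotency) resp.\ $\wh{G}=Z(\wh{G})\,\iota(A)$ (giving commutativity). The only cosmetic difference is that, since all groups here are totally disconnected, the paper cites the closed-image fact of~\ref{nicemorph} directly instead of the more general Corollary~\ref{precloprod}.
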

\begin{proof}
After replacing~$A$ with $\iota(A)$,
we may assume that $A\sub \wh{G}$ and~$\iota$ is the inclusion map.
Since $Z(\wh{G})$ is a closed $\wh{\alpha}$-stable normal subgroup of~$\wh{G}$
and $q\colon \wh{G}\to G$ is a surjective morphism of contraction groups, the image
$q(Z(\wh{G}))$ is a closed $\alpha$-stable normal subgroup of~$G$
(see~\ref{nicemorph}).
Hence $q(Z(\wh{G}))=\{e\}$ or $q(Z(\wh{G}))=G$.
In the first case, $Z(\wh{G})$ is a non-trivial, $\alpha_A$-stable closed normal subgroup of~$A$,
whence $Z(\wh{G})=A$ by simplicity of $(A,\alpha_A)$; as a consequence, $\wh{G}$ is 2-step nilpotent.
In the second case, $\wh{G}=Z(\wh{G})A$ is abelian.
\end{proof}
In the next lemma, we use notation as in Appendix~\ref{cohom}.
Note that,
in the situation of~\ref{thesetapp},
the trivial homomorphism $\gamma\colon G\to \Aut(A)$, $x\mto \id_A$
always satisfies~(\ref{covar}).
\begin{la}\label{lessknown}
Let $(G,\alpha)$ and $(A,\alpha_A)$
be totally disconnected, locally compact contraction groups with~$A$ abelian.
Let
$\omega_1,\omega_2\in Z^2_{\eq}(G,A)$
with respect to the trivial homomorphism
$\gamma\colon G\to\Aut(A)$, $x\mto \id_A$ and
\[
\phi\colon A\times_{\omega_1} G\to A\times_{\omega_2}G
\]
be an isomorphism of contraction groups such that $\phi(A\times\{e\})=A\times\{e\}$.
Let $\phi_A$ be the automorphism of the contraction group~$A$ determined by $\phi(x,e)=(\phi_A(x),e)$
for $x\in A$ and $\wb{\phi}$ be the automorphism of the contraction group~$G$
determined by $\wb{\phi}(\pr_2(x,y))=\pr_2(\phi(x,y))$ for $x\in A$ and $y\in G$.
Then there exists $\beta\in B^2_{\eq}(G,A)$ such that
\[
\omega_2=\phi_A\circ\omega_1\circ(\wb{\phi}^{-1}\times\wb{\phi}^{-1})+\beta,
\]
using additive notation for~$A$.
\end{la}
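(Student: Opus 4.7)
The plan is to extract a continuous equivariant function $t\colon G\to A$ whose coboundary is the required $\beta$, using the structure of $\phi$ as a ``semi-diagonal'' map with a ``correction term'' coming from $\phi$ restricted to a set-theoretic section of the second projection.

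First I would normalize: by the standard argument sketched in Appendix~\ref{cohom}, after modifying $\omega_1$ and $\omega_2$ by elements of $B^2_{\eq}(G,A)$ (which does not affect the class of the right-hand side of the desired formula) I may assume $\omega_i(e,g)=\omega_i(g,e)=0$ for all $g\in G$ and $i\in\{1,2\}$. Then $(0,e)$ is the neutral element of $A\times_{\omega_i}G$, and $(a,e)\cdot(0,g)=(a,g)$ in each case. Since $\phi(e)=e$ and $\phi(A\times\{e\})=A\times\{e\}$, the map $\phi_A$ is a well-defined automorphism of the contraction group~$A$, and since $A\times\{e\}$ is a normal (in fact central, as $\gamma$ is trivial) subgroup, $\phi$ descends to an automorphism $\wb{\phi}$ of the contraction group~$G$. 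Define
\[
s\colon G\to A,\quad s(g):=\pr_1(\phi(0,g)),
\]
which is continuous with $s(e)=0$, and $\phi(0,g)=(s(g),\wb{\phi}(g))$ for all $g\in G$.

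Next, writing $(a,g)=(a,e)\cdot(0,g)$ and using that $\phi$ is a homomorphism, I obtain
\[
\phi(a,g)=(\phi_A(a),e)\cdot(s(g),\wb{\phi}(g))=(\phi_A(a)+s(g),\wb{\phi}(g)).
\]
Applying $\phi$ to the product $(a_1,g_1)\cdot(a_2,g_2)=(a_1+a_2+\omega_1(g_1,g_2),g_1g_2)$ in $A\times_{\omega_1}G$, and computing the corresponding product in $A\times_{\omega_2}G$, comparison of the first coordinates yields
\[
\phi_A(\omega_1(g_1,g_2))+s(g_1g_2)=s(g_1)+s(g_2)+\omega_2(\wb{\phi}(g_1),\wb{\phi}(g_2)).
\]
Substituting $g_i:=\wb{\phi}^{-1}(h_i)$ and setting $t:=s\circ\wb{\phi}^{-1}$, I arrive at
\[
\omega_2(h_1,h_2)=\phi_A\bigl(\omega_1(\wb{\phi}^{-1}(h_1),\wb{\phi}^{-1}(h_2))\bigr)+\beta(h_1,h_2),
\]
where $\beta(h_1,h_2):=t(h_1h_2)-t(h_1)-t(h_2)$ is the coboundary of~$t$.

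It remains to verify $\beta\in B^2_{\eq}(G,A)$. Continuity of $t$ is inherited from $\phi$ and $\wb{\phi}^{-1}$. For equivariance, apply the identity $\phi\circ(\alpha_A\times\alpha)=(\alpha_A\times\alpha)\circ\phi$ to $(0,g)$: this gives $s(\alpha(g))=\alpha_A(s(g))$ and $\wb{\phi}\circ\alpha=\alpha\circ\wb{\phi}$, whence $t\circ\alpha=\alpha_A\circ t$, so $\beta$ is equivariant.

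The only real technical step is the initial normalization of $\omega_1,\omega_2$; this is routine given Appendix~\ref{cohom}, but needs to be mentioned so that the identification $\phi(a,g)=(\phi_A(a)+s(g),\wb{\phi}(g))$ holds cleanly. The rest is a direct translation of the classical cohomological computation for group extensions into the equivariant continuous setting, using that the continuity and equivariance of~$s$ (and hence of~$t$) are automatic from the corresponding properties of~$\phi$.
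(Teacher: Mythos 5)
Your proof is correct, but it takes a more hands-on route than the paper. The paper argues structurally: it sets $\omega_0:=\phi_A\circ\omega_1\circ(\wb{\phi}^{-1}\times\wb{\phi}^{-1})$, checks that $\phi_A^{-1}\times\wb{\phi}^{-1}\colon A\times_{\omega_0}G\to A\times_{\omega_1}G$ is an isomorphism of contraction groups, so that $\psi:=\phi\circ(\phi_A^{-1}\times\wb{\phi}^{-1})$ is an equivalence of the extensions $A\times_{\omega_0}G$ and $A\times_{\omega_2}G$, and then invokes Proposition~\ref{soinj} to conclude $\omega_2-\omega_0\in B^2_{\eq}(G,A)$. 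You instead inline the computation that is hidden in Lemma~\ref{secstocob} and Proposition~\ref{soinj}: you extract $s(g)=\pr_1(\phi(0,g))$, derive the transformation identity for the cocycles by comparing first coordinates, and exhibit the coboundary explicitly via $t=s\circ\wb{\phi}^{-1}$, checking continuity and equivariance of $t$ directly from the equivariance of $\phi$. Both arguments are sound; the paper's version buys brevity by reusing the appendix machinery, yours is self-contained and makes the coboundary explicit. Two small remarks. First, your opening normalization step is unnecessary: every $\omega\in Z^2_{\eq}(G,A)$ is automatically normalized, since equivariance forces $\omega(e,e)=0$ (see (\ref{cocnorma})) and the cocycle identity then gives $\omega(e,g)=\omega(g,e)=0$ (see (\ref{helpf1}), (\ref{helpf2})); as phrased, ``modifying $\omega_1,\omega_2$ by coboundaries'' would also change the groups $A\times_{\omega_i}G$ and the given $\phi$, so it would need an extra word of justification (composing with the equivalences of Lemma~\ref{sowelldef} and twisting the correction coboundary by $\phi_A$ and $\wb{\phi}^{-1}$) --- better simply to drop it. Second, with the paper's sign convention your $\beta(h_1,h_2)=t(h_1h_2)-t(h_1)-t(h_2)$ is the coboundary $\omega_{-t}$ rather than $\omega_t$; since $B^2_{\eq}(G,A)$ is a group and $-t$ is again continuous and equivariant, this is immaterial, but the wording ``the coboundary of $t$'' should be adjusted.
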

\begin{proof}
It is immediate that
\[
\omega_0:=\phi_A\circ \omega_1\circ (\wb{\phi}^{-1}\times\wb{\phi}^{-1})\in Z^2_{\eq}(G,A)
\]
and
a straightforward calculation shows that the equivariant homeomorphism
\[
\phi_A^{-1}\times\wb{\phi}^{-1}\colon
 A\times_{\omega_0} G\to A\times_{\omega_1}G,\;\, (x,y)\mto(\phi_A^{-1}(x),\wb{\phi}^{-1}(y))
\]
is a group homomorphism (and thus an isomorphism of contraction groups).
Hence also
\[
\psi:=\phi\circ
(\phi_A^{-1}\times\wb{\phi}^{-1})\colon
A\times_{\omega_0} G\to A\times_{\omega_2}G
\]
is an isomorphism of contraction groups.
By construction, we have
\[
\psi(x,e)=\phi(\phi_A^{-1}(x),e)=(\phi_A(\phi_A^{-1}(x)),e)=(x,e)
\]
for all $x\in A$ and thus $\psi\circ\iota=\iota$
with $\iota\colon A\to A\times G$, $x\mto (x,e)$.
Moreover, $\pr_2\circ\, \psi=\pr_2$ since $\pr_2\circ \, \phi=\wb{\phi}\circ \pr_2$.
Hence $\psi$ is an isomorphism between the extensions
\[
\{e\}\to A\stackrel{\iota}{\to} A\times_{\omega_0} G\stackrel{\pr_2}{\to}\{e\}
\;\;\mbox{and}\;\;
\{e\}\to A\stackrel{\iota}{\to} A\times_{\omega_2} G\stackrel{\pr_2}{\to}\{e\}
\]
of totally disconnected, locally compact contraction groups.
Therefore\linebreak
$\beta:=\omega_2-\omega_0\in B^2_{\eq}(G,A)$
(see Proposition~\ref{soinj}).
\end{proof}
\section{Description of biadditive cocycles}\label{secbiadd}
Let $p$ be a prime, $A:=\bF_p(\!(t)\!)$ and $\alpha\colon A\to A$ be the contractive
automorphism $f\mto tf$. We let $A$ act on itself via
$\gamma\colon A\to\Aut(A)$, $x\mto\id_A$,
i.e., we consider central extensions of~$A$ by~$A$.
Thus, a continuous equivariant 2-cocycle $\omega\colon A\times A\to A$
is a continuous map satisfying the cocycle identity
\begin{equation}\label{concreteid}
\omega(y,z)-\omega(x+y,z)+\omega(x,y+z)-\omega(x,y)=0\;\,\mbox{for all $x,y,z\in A$}
\end{equation}
and the equivariance condition $\alpha\circ \omega=\omega\circ(\alpha\times\alpha)$,
i.e.,
\[
t\omega(x,y)=\omega(tx,ty)\;\,\mbox{for all $x,y\in A$.}
\]
As a consequence,
\begin{equation}\label{tnequi}
t^k\omega(x,y)=\omega(t^kx,t^ky)\;\,\mbox{for all $x,y\in A$ and $k\in\Z$.}
\end{equation}
The cocycle condition (\ref{concreteid}) is satisfied
in particular if a map $\omega\colon A\times A\to A$ is \emph{biadditive},
i.e., $\omega(x+y,z)=\omega(x,z)+\omega(y,z)$ for all $x,y,z\in A$ and
$\omega(x,y+z)=\omega(x,y)+\omega(x,z)$.
Then $\omega(x,0)=\omega(0,x)=0$ for all $x\in A$.\\[2mm]
If $\omega\colon A\times A\to A$ is a continuous, equivariant, biadditive map,
we define
\begin{equation}\label{defnbm}
b_m(\omega):=\omega(t^0,t^m)\;\,\mbox{for $m\in\Z$.}
\end{equation}
Then
\[
b_m(\omega)=\omega(t^0,t^m)\to\omega(t^0,0)=0
\]
as $m\to\infty$ and
\[
t^mb_{-m}(\omega)=t^m\omega(t^0,t^{-m})=\omega(t^m,t^0)\to 0
\]
as $m\to\infty$. The two-sided sequences $(b_m(\omega))_{m\in\Z}$
are therefore elements of the set~$B$
of all $(a_m)_{m\in \Z}$ with $a_m\in A$ such that
\begin{equation}\label{behav}
\mbox{$a_m\to 0$ and $t^ma_{-m}\to 0$ as $m\to\infty$.}
\end{equation}
Let $Z^2_{\bil}(A,A)\sub Z^2_{\eq}(A,A)$ be the set of all continuous, equivariant,
biadditive (and hence $\bF_p$-bilinear) maps $\omega\colon A\times A\to A$.
For example, for each $n\in\Z$ we have
$\omega_n\in Z^2_{\bil}(A,A)$ if we define
\begin{equation}\label{omegam-1}
\omega_n(x,y):=\sum_{i=i_0}^\infty x_iy_{i+n}t^i
\end{equation}
for all $x=\sum_{i=i_0}^\infty x_it^i$, $y=\sum_{j=j_0}^\infty y_jt^j$
with $i_0,j_0\in\Z$ and $x_i,y_j\in\bF_p$ for integers $i\geq i_0$
and $j\geq j_0$ (and $y_j:=0$ for integers $j<j_0$ here
and in the following).
Note that the limit in (\ref{omegam-1})
exists and defines a continuous $A$-valued function
on each ball
\[
\wb{B}^A_{p^{-i_0}}(0):=\{x\in A\colon |x|\leq p^{-i_0}\}
\]
(and hence on~$A$) as
\[
f_i\colon \wb{B}^A_{p^{-i_0}}(0)\times A\to t^i \bF_p[\![t]\!],\quad (x,y)\mto x_iy_{i+n}t^i
\]
is a continuous mapping for each integer $i\geq i_0$, and Lemma~\ref{egsum} applies.\\[2.3mm]
We mention that
\begin{equation}\label{projectcomp}
\omega_n(t^{i_0},t^{j_0})=\sum_{i=i_0}^\infty \delta_{i_0,i}\delta_{j_0,i+n}t^i
= \delta_{j_0,i_0+n}t^{i_0} \;\; \mbox{for all $i_0,j_0\in\Z$}
\end{equation}
(using Kronecker's delta, considered as an element of $\bF_p\sub\bF_p(\!(t)\!)$).
Notably,
\begin{equation}\label{procomp2}
\omega_n(t^0,t^j)=\delta_{n,j}\;\mbox{for all $j,n\in\Z$.}
\end{equation}
Observe that
\begin{equation}\label{very-basic}
|\omega_n(x,y)|\leq |x|\;\mbox{for all $n\in\Z$ and $x,y\in A$.}
\end{equation}
This is clear if $x=0$, as $\omega_n(x,y)=0$ then.
If $x\not=0$, write $x=\sum_{i=i_0}^\infty x_it^i$ with $x_{i_0}\not=0$ and $y=\sum_{j=j_0}y_jt^j$.
By (\ref{omegam-1}), $|\omega_n(x,y)|\leq p^{-i_0}=|x|$.
\begin{prop}\label{paracoc}
The mapping $b\colon Z^2_{\bil}(A,A)\to B$, $\omega\mto (b_m(\omega))_{m\in\Z}$
is a bijection.
\end{prop}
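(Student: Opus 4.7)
The plan is to establish injectivity and surjectivity of $b$ separately. Injectivity will follow from biadditivity, equivariance, and continuity of $\omega$, which together allow reconstruction of $\omega$ from the one-parameter sequence $(b_m(\omega))$. Surjectivity will be achieved by constructing a right inverse as an explicit doubly infinite series built from the biadditive cocycles $\omega_n$ of~(\ref{omegam-1}).

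For injectivity, suppose $\omega\in Z^2_{\bil}(A,A)$ satisfies $b_m(\omega)=0$ for all $m\in\Z$. Equivariance~(\ref{tnequi}) applied to $(t^0,t^{j-i})$ gives
\[
\omega(t^i,t^j)=t^i\,\omega(t^0,t^{j-i})=t^i b_{j-i}(\omega)=0
\]
for all $i,j\in\Z$. Biadditivity over~$\bF_p$ then forces $\omega$ to vanish on every pair of Laurent polynomials; as Laurent polynomials are dense in~$A$, continuity of~$\omega$ yields $\omega\equiv 0$.

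For surjectivity, given $(a_n)_{n\in\Z}\in B$, I propose to define
\[
\omega(x,y):=\sum_{n\in\Z} a_n\,\omega_n(x,y),
\]
with the product computed in~$A$. Formally this matches $\omega(x,y)=\sum_{i,j} x_iy_jt^ia_{j-i}$, which is the expression dictated by injectivity. To show convergence and continuity, I would work on compact sets of the form $\wb{B}^A_{p^{-i_0}}(0)\times\wb{B}^A_{p^{-j_0}}(0)$ and split the sum into the parts $n\geq 0$ and $n\leq -1$. For $n\geq 0$, the estimate~(\ref{very-basic}) yields $|a_n\omega_n(x,y)|\leq |a_n|\,p^{-i_0}$, which tends to zero because $a_n\to 0$. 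For $n=-m$ with $m$ large enough that $j_0+m\geq i_0$, the ultrametric bound $|\omega_{-m}(x,y)|\leq p^{-(j_0+m)}$ (since $\omega_{-m}(x,y)=\sum_{i\geq j_0+m}x_iy_{i-m}t^i$) gives $|a_{-m}\omega_{-m}(x,y)|\leq p^{-j_0}\,|t^ma_{-m}|$, which tends to zero by the defining condition on~$B$. Applying Lemma~\ref{egsum} to the partial sums over $n\geq 0$ and over $n\leq -1$ separately produces two continuous functions, whose sum is~$\omega$.

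Biadditivity and equivariance of~$\omega$ pass to the uniform-on-compacta limit from the biadditive, equivariant partial sums. Finally, evaluation on $(t^0,t^m)$ together with~(\ref{procomp2}) gives
\[
\omega(t^0,t^m)=\sum_{n\in\Z}a_n\,\delta_{n,m}=a_m,
\]
so $b(\omega)=(a_n)_{n\in\Z}$. The main technical obstacle is the negative tail: the moduli $|a_{-m}|$ need not tend to zero, and it is precisely the asymmetric second condition $t^ma_{-m}\to 0$ in the definition of~$B$, combined with the ultrametric bound on $|\omega_{-m}(x,y)|$ on balls, that is needed to rescue convergence there.
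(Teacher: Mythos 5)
Your proposal is correct and follows essentially the same route as the paper: the same injectivity argument via $\omega(t^i,t^j)=t^ib_{j-i}(\omega)$, density, and continuity, and the same construction $\omega_a=\sum_{n\ge 0}a_n\omega_n+\sum_{n\ge 1}a_{-n}\omega_{-n}$ with the estimates $|a_n\omega_n(x,y)|\le |a_n|p^{-i_0}$ and $|a_{-n}\omega_{-n}(x,y)|\le |t^na_{-n}|p^{-j_0}$ on balls, followed by Lemma~\ref{egsum}. The only detail to make explicit is that, to invoke Lemma~\ref{egsum}, one should replace these bounds by their (monotonically decreasing) tail suprema so as to obtain a nested sequence of compact open subgroups containing the terms, exactly as the paper does with its sequences $\rho_n$ and $\tau_n$.
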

\begin{proof}
The domain and range of $b$ are groups under pointwise addition and $b$ is a homomorphism.
If $b(\omega)=0$, then $0=b_m(\omega)=\omega(t^0,t^m)$ for each $m\in\Z$
and hence also $\omega(t^n,t^m)=t^n\omega(t^0,t^{m-n})=t^nb_{m-n}(\omega)=0$.
Since $\omega$ is continuous biadditive and $\{t^n\colon n\in\Z\}$ generates a dense
subgroup of~$A$, we deduce that $\omega=0$. Thus $\ker(b)=\{0\}$ and
hence~$b$ is injective.
To see that~$b$ is surjective, let $a=(a_m)_{m\in\Z}\in B$.
We claim that both series needed to define
\begin{equation}\label{def-om-a}
\omega_a(x,y):=\sum_{n=0}^\infty a_n\omega_n(x,y)+\sum_{n=1}^\infty a_{-n}\omega_{-n}(x,y)
\end{equation}
are convergent and define continuous $A$-valued functions of $(x,y)\!\in\! A\!\times \!A$.
Then $\omega_a$ is biadditive
and equivariant, as these properties are inherited by $\bF_p(\!(t)\!)$-linear combinations
of biadditive equivariant maps
and pointwise limits of such. Hence $\omega_a\in Z^2_{\bil}(A,A)$.
As $\omega_n(t^0,t^m)=\delta_{n,m}$ by (\ref{procomp2}), we have
\[
\omega_a(t^0,t^m)
=\sum_{n=0}^\infty a_n\omega_n(t^0,t^m)
+\sum_{n=1}^\infty a_{-n}\omega_{-n}(t^0,t^m)=a_m
\]
for all $m\in \Z$ and thus $b(\omega_a)=a$, whence~$b$ is surjective and hence bijective.
To prove the claim, let $i_0\in\Z$.
The sequence $(\rho_n)_{n\in\N_0}$ with
\[
\rho_n:=\sup\{|a_k|p^{-i_0}\colon k\geq n\}
\]
is monotonically decreasing and $\lim_{n\to\infty}\rho_n=0$.
Let $x:=\sum_{i=i_0}^\infty x_it^i$ and $y:=\sum_{i=i_0}^\infty y_it^i$
with $x_i,y_i\in\bF_p$ for $i\geq i_0$. 
For each $k\in \N_0$, we have
\[
|a_k\omega_k(x,y)|=|a_k|\,\left|\sum_{i=i_0}^\infty x_iy_{i+k}t^i\right|
\leq |a_k|\,|t^{i_0}|=|a_k|p^{-i_0}\leq \rho_k
\]
and thus $a_k\omega_k(x,y)\in\wb{B}^A_{\rho_k}(0)$.
Using Lemma~\ref{egsum},
we deduce that the first sum in (\ref{def-om-a}) converges
to a continuous function of $(x,y)\in \wb{B}^A_{p^{-i_0}}(0)\times
\wb{B}^A_{p^{-i_0}}(0)$.

Also $\tau_n:=\sup\{|t^ka_{-k}|p^{-i_0}\colon k\geq n\}$
defines a monotonically decreasing sequence $(\tau_n)_{n\in\N}$
such that $\lim_{n\to\infty}\tau_n=0$.
For $k\in\N$ we have
\begin{eqnarray*}
|a_{-k}\omega_{-k}(x,y)| &=& |a_{-k}|\,\left|\sum_{i=i_0+k}^\infty x_iy_{i-k}t^i\right|
=|a_{-k}|\,|t^k|\left|\sum_{j=i_0}^\infty x_{j+k}y_j t^j\right|\\
& \leq & |a_{-k}t^k|\,|t^{i_0}|=|a_{-k}t^k| p^{-i_0}\leq \tau_k.
\end{eqnarray*}
Using Lemma~\ref{egsum}, we find that
$\sum_{k=1}^\infty a_{-k}\omega_{-k}(x,y)$ converges
for all $(x,y)\in \wb{B}^A_{p^{-i_0}}(0)\times
\wb{B}^A_{p^{-i_0}}(0)$
and defines a continuous $A$-valued function thereof.
\end{proof}
\section{An uncountable set of contraction groups}\label{secex}
For $A$ as in Section~\ref{secbiadd} and $s=(s_n)_{n\in\N}\in \{0,1\}^\N$,
we define a function $\eta_s\colon A\times A\to A$ via
\[
\eta_s(x,y):=\sum_{n=1}^\infty s_n t^n \omega_{2n}(x,y)\;\,\mbox{for $x,y\in A$,}
\]
with $\omega_n$ as in~(\ref{omegam-1}).
Thus $\eta_s=\omega_a$ (as in (\ref{def-om-a})) with $a_{2n}:=s_nt^n$ for all $n\in\N$,
$a_n:=0$ for $n\in -\N_0$ or $n\in 2\N-1$. As a consequence,
$\eta_s\in Z^2_{\bil}(A,A)$. Note that $\{0,1\}^\N$ is an uncountable set of
continuum cardinality. We show:
\begin{thm}\label{mainres}
The contraction groups $(A\times_{\eta_s} A,\alpha\times \alpha)$
for $s\in \{0,1\}^\N$ are pairwise not isomorphic.
Thus, if $r,s\in\{0,1\}^\N$ and there exists an isomorphism
$\psi\colon A\times_{\eta_s} A\to A\times_{\eta_r} A$ of topological groups
such that $(\alpha\times\alpha)\circ\psi=\psi\circ (\alpha\times\alpha)$,
then $r=s$.
In particular, the map
\[
\{0,1\}^\N\to H^2_{\eq}(A,A),\quad s\mto [\eta_s]
\]
is injective.
\end{thm}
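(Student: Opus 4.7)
The first step is to dispose of the case $s=0$. Here $\eta_s=0$, so $A\times_{\eta_s}A$ is abelian, whereas for any $r\ne 0$ the commutator $[(0,t^0),(0,t^{2n})]=(r_n t^n,0)$ is non-trivial whenever $r_n=1$, making $A\times_{\eta_r}A$ non-abelian. Thus an equivariant isomorphism forces $r=0$ in this case, and from now on I assume $s,r\ne 0$.

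The plan is to invoke Lemma~\ref{lessknown}, for which I first need $\psi(A\times\{0\})=A\times\{0\}$. Since the extensions are central, $\overline{[G,G]}\sub A\times\{0\}$ for $G=A\times_{\eta_s}A$; this subgroup is closed and $(\alpha\times\alpha)$-stable, and it is non-trivial by the same commutator computation. As $A\times\{0\}\cong\bF_p(\!(t)\!)$ is a \emph{simple} locally compact contraction group by \cite[Theorem~A]{GW}, we get $\overline{[G,G]}=A\times\{0\}$; the same holds for the target, so $\psi(A\times\{0\})=A\times\{0\}$. Lemma~\ref{lessknown} then produces $\phi_A,\bar\phi\in\Aut(A,\alpha)$ and $\beta\in B^2_{\eq}(A,A)$ with $\eta_r=\phi_A\circ\eta_s\circ(\bar\phi^{-1}\times\bar\phi^{-1})+\beta$. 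Every such coboundary has the symmetric form $(x,y)\mto\mu(x)+\mu(y)-\mu(x+y)$, so passing to the alternating parts $\kappa_\bullet(x,y):=\eta_\bullet(x,y)-\eta_\bullet(y,x)$ kills $\beta$. A continuous $\alpha$-equivariant additive self-map of $A$ is $\bF_p(\!(t)\!)$-linear (from $\bF_p$-linearity, commutation with multiplication by $t$, continuity, and density of $\bF_p[t,t^{-1}]$ in $A$), hence multiplication by an element of $A^\times$; writing $\phi_A(x)=cx$ and $\bar\phi^{-1}(x)=ux$ with $c,u\in A^\times$, we obtain
\[
\kappa_r(x,y)=c\,\kappa_s(ux,uy).
\]

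The endgame is a leading-coefficient comparison at $(x,y)=(1,t^{2N})$. By~(\ref{projectcomp}) and the definition of $\eta_s$, $\kappa_s(1,t^{2N})=s_N t^N$ for each $N\ge 1$. Factor $u=\lambda t^{i_0}w$ with $\lambda\in\bF_p^\times$, $i_0\in\Z$, $w-1\in t\bF_p[\![t]\!]$, and $c=c_0 t^{k_0}(1+c')$ with $c_0\in\bF_p^\times$, $k_0\in\Z$, $c'\in t\bF_p[\![t]\!]$. Equivariance gives $\kappa_s(u,ut^{2N})=\lambda^2 t^{i_0}\kappa_s(w,wt^{2N})$; writing $w=\sum_{i\ge 0}w_i t^i$ with $w_0=1$ and expanding bilinearly, the ultrametric estimate~(\ref{very-basic}) together with the parity constraint ($\kappa_s(t^i,t^{j+2N})$ vanishes unless $i\equiv j\pmod 2$) shows that every $(i,j)\ne(0,0)$ contributes a term of absolute value at most $p^{-(N+1)}$, so
\[
\kappa_s(w,wt^{2N})=s_N t^N+\epsilon_N\quad\text{with}\quad |\epsilon_N|\le p^{-(N+1)}.
\]
The identity $r_N t^N=c_0\lambda^2 s_N t^{N+i_0+k_0}+\delta_N$ with $|\delta_N|\le p^{-(N+i_0+k_0+1)}$ then forces, for any $N_0$ with $s_{N_0}=1$, both $i_0+k_0=0$ and $c_0\lambda^2=1$ in $\bF_p^\times$ (by matching absolute values and leading $t$-coefficients). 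Substituting back, for every $N\ge 1$ we obtain $r_N=c_0\lambda^2 s_N=s_N$, so $r=s$. The injectivity of $s\mto[\eta_s]$ into $H^2_{\eq}(A,A)$ is then formal, since equivalent extensions yield equivariantly isomorphic contraction groups.

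The principal technical obstacle is the leading-term analysis of $\kappa_s(w,wt^{2N})$: the bilinear expansion has infinitely many summands depending on the unknown series $w$, and one must combine the sharp estimate~(\ref{very-basic}) with the parity structure to confirm that the diagonal contribution $s_N t^N$ really dominates, independently of $w$.
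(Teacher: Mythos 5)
Your proposal is correct and follows essentially the same route as the paper: pin down $A\times\{0\}$ intrinsically so that Lemma~\ref{lessknown} and Lemma~\ref{eqAuto} reduce everything to $\kappa_r(x,y)=c\,\kappa_s(ux,uy)$ with the coboundary killed by antisymmetrization, then compare leading terms ultrametrically -- the paper uses the centre (Lemma~\ref{centreA}) where you use the closed derived subgroup plus simplicity of $(A,\alpha)$, and it normalizes $|b|=1$ and estimates $|\eta_r(x,y)-\eta_r(y,x)|$ uniformly over $|x|=1$, $|y|=p^{-2m}$ where you instead expand $u=\lambda t^{i_0}w$, $c=c_0t^{k_0}(1+c')$ at the monomial pair $(1,t^{2N})$; these are equivalent computations. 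One small caveat: the bound $|\epsilon_N|\le p^{-(N+1)}$ does not follow from~(\ref{very-basic}) and parity alone (for $(i,j)=(1,1)$ that estimate only gives $p^{-2}$); you need the explicit formula~(\ref{projectcomp}), which shows each term $w_iw_j\kappa_s(t^i,t^{j+2N})$ with $(i,j)\ne(0,0)$ is a scalar multiple of $t^{N+(i+j)/2}$ with $i+j\ge 2$ -- this is immediate, so your conclusion stands.
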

Several lemmas will help us to establish the theorem.
Given $a\in \bF_p(\!(t)\!)^\times$, let $\mu_a\colon \bF_p(\!(t)\!)\to\bF_p(\!(t)\!)$
be the multiplication operator $x\mto xa$.
Since $\mu_a(tx)=txa=t\mu_a(x)$ for all $x\in A$,
we see that $\mu_a$ is an element of the group $\Aut(A,\alpha)$
of all automorphisms of the contraction group $(A,\alpha)$.
\begin{la}\label{eqAuto}
The map $\ve\colon \Aut(A,\alpha)\to \bF_p(\!(t)\!)^\times$, $\phi\mto \phi(1)$
is an isomorphism of groups with inverse $a\mto\mu_a$.
\end{la}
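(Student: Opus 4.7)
The plan is to show that every $\phi\in\Aut(A,\alpha)$ is actually $\bF_p(\!(t)\!)$-linear (with respect to the natural $\bF_p(\!(t)\!)$-module structure on $A$), from which the assertion follows immediately.

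First I would fix $\phi\in\Aut(A,\alpha)$ and observe that $\phi$, being a group homomorphism of the additive group $A$, is automatically $\bF_p$-linear since $p\cdot x=0$ for all $x\in A$. The equivariance $\phi\circ\alpha=\alpha\circ\phi$ reads $\phi(tx)=t\phi(x)$, and since $\alpha$ is invertible and $\phi$ commutes with $\alpha$ it also commutes with $\alpha^{-1}$, giving $\phi(t^nx)=t^n\phi(x)$ for every $n\in\Z$. Combining this with $\bF_p$-linearity shows that $\phi$ is $\bF_p[t,t^{-1}]$-linear, i.e.\ $\phi(fx)=f\phi(x)$ for every Laurent polynomial $f\in\bF_p[t,t^{-1}]$.

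Next I would upgrade this to full $\bF_p(\!(t)\!)$-linearity. The ring $\bF_p[t,t^{-1}]$ is dense in $\bF_p(\!(t)\!)$ because each element of $\bF_p(\!(t)\!)$ is approximated arbitrarily well by its truncations. Fixing $x\in A$, the maps $a\mto \phi(ax)$ and $a\mto a\phi(x)$ are continuous $\bF_p$-linear maps $\bF_p(\!(t)\!)\to A$ which agree on the dense subset $\bF_p[t,t^{-1}]$, so they coincide on all of $\bF_p(\!(t)\!)$. In particular, taking $x=1$ gives $\phi(a)=a\phi(1)=\mu_{\phi(1)}(a)$ for all $a\in A$, i.e.\ $\phi=\mu_{\phi(1)}$. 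Since $\phi$ is bijective, $\phi(1)$ must be a unit of $\bF_p(\!(t)\!)$, so $\varepsilon(\phi)=\phi(1)\in\bF_p(\!(t)\!)^\times$.

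Conversely, for each $a\in\bF_p(\!(t)\!)^\times$ the multiplication operator $\mu_a\colon A\to A$ is a continuous $\bF_p$-linear bijection with continuous inverse $\mu_{a^{-1}}$ and satisfies $\mu_a(tx)=t\mu_a(x)$, so $\mu_a\in\Aut(A,\alpha)$; clearly $\varepsilon(\mu_a)=a$. It only remains to check that $\varepsilon$ is multiplicative: using $\bF_p(\!(t)\!)$-linearity of $\phi_1$ together with commutativity of $\bF_p(\!(t)\!)$,
\[
\varepsilon(\phi_1\circ\phi_2)=\phi_1(\phi_2(1))=\phi_2(1)\phi_1(1)=\varepsilon(\phi_1)\varepsilon(\phi_2).
\]
Combining the formula $\phi=\mu_{\phi(1)}$ with $\varepsilon(\mu_a)=a$ shows that $a\mto\mu_a$ is a two-sided inverse of $\varepsilon$, completing the proof. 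There is really no hard step here; the only mild subtlety is the density/continuity argument to pass from $\bF_p[t,t^{-1}]$-linearity to $\bF_p(\!(t)\!)$-linearity.
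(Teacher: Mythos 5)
Your proof is correct and follows essentially the same route as the paper: equivariance pins down $\phi$ on the powers $t^i$ (equivalently, gives $\bF_p[t,t^{-1}]$-linearity), and continuity together with density of the Laurent polynomials then yields $\phi=\mu_{\phi(1)}$, which the paper phrases as a direct series computation $\phi(x)=\sum_i x_i\phi(t^i)=xa$. The remaining verifications (that $\mu_a\in\Aut(A,\alpha)$ and that $a\mto\mu_a$ is a homomorphism inverse to $\ve$) match the paper's as well.
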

\begin{proof}
Since $\mu_a(1)=a$ for each $a\in \bF_p(\!(t)\!)$,
the map~$\ve$ is surjective. If $\phi \in\Aut(A,\alpha)$ and $a:=\phi(1)$,
then $\phi(t^i)=(\phi\circ\alpha^i)(1)=(\alpha^i\circ\phi)(1)=t^ia=\mu_a(t^i)$
for each $i\in\Z$. As a consequence, for each $x\in A$, say $x=\sum_{i=i_0}^\infty x_it^i$
with $i_0\in\Z$ and $x_i\in\bF_p$, we have
\[
\phi(x)=\sum_{i=i_0}^\infty x_i\phi(t^i)=\sum_{i=i_0}^\infty x_it^ia=xa,
\]
whence $\phi=\mu_a$. Thus $\phi$ is determined by $a=\ve(\phi)$ and thus $\ve$
is injective. Since $\ve^{-1}\colon a\mto \mu_a$ is a group homomorphism,
$\ve$ is an isomorphism.
\end{proof}
\begin{la}\label{main-estimate}
Let $s\in \{0,1\}^\N$ and $x,y\in A$. Let $n_0\in\N$ such that
$s_n=0$ for all $n\in \{1,2,\ldots, n_0-1\}$.
Then
\begin{equation}\label{two-versions}
|\eta_s(x,y)| \leq p^{-n_0}|x|.
\end{equation}
Notably, $|\eta_s(x,y)|\leq p^{-1}|x|<|x|$ if $x\not=0$.
\end{la}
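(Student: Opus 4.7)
The plan is to substitute the definition of $\eta_s$, strip off the vanishing initial terms, bound each remaining summand in absolute value, and then apply the ultrametric inequality.

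More precisely, by the hypothesis $s_n=0$ for $1\le n<n_0$, so
\[
\eta_s(x,y)=\sum_{n=n_0}^\infty s_n\,t^n\omega_{2n}(x,y),
\]
and I know from the argument in the proof of Proposition~\ref{paracoc} (applied to the sequence $a=(a_m)_{m\in\Z}$ with $a_{2n}=s_nt^n$ and all other entries zero) that this series converges in~$A$. For each individual term with $n\ge n_0$, the estimate~(\ref{very-basic}) combined with the multiplicativity of $|\cdot|$ and the trivial bound $|s_n|\le 1$ gives
\[
|s_n t^n\omega_{2n}(x,y)|\,\le\, p^{-n}\,|\omega_{2n}(x,y)|\,\le\, p^{-n}|x|\,\le\, p^{-n_0}|x|.
\]

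Since~$|\cdot|$ is an ultrametric absolute value (cf.~(\ref{ultra})), the absolute value of any finite partial sum is bounded by the maximum of the absolute values of its terms, hence by $p^{-n_0}|x|$; passing to the limit and using continuity of~$|\cdot|$, the same bound holds for $\eta_s(x,y)$. This proves~(\ref{two-versions}).

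For the ``notably'' clause, observe that the hypothesis is vacuously satisfied with $n_0=1$ regardless of~$s$, so the inequality just proved yields $|\eta_s(x,y)|\le p^{-1}|x|$, which is strictly less than $|x|$ whenever $x\neq 0$. I expect no real obstacle here: the whole argument is just the ultrametric triangle inequality applied termwise, and the only mildly subtle point is to remember that convergence of the defining series has already been established in Section~\ref{secbiadd}, so passing the bound through the limit is legitimate.
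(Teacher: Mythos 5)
Your proof is correct and follows essentially the same route as the paper: termwise estimate $|s_nt^n\omega_{2n}(x,y)|\leq p^{-n}|x|\leq p^{-n_0}|x|$ via~(\ref{very-basic}), then the ultrametric bound on finite partial sums passed to the limit. The observation that the ``notably'' clause is the case $n_0=1$ (where the hypothesis is vacuous) is also exactly what the paper intends.
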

\begin{proof}
For all $x,y\in A$, we have
\begin{equation}\label{partsum}
\eta_s(x,y)=\sum_{n=n_0}^\infty s_nt^n\omega_{2n}(x,y),
\end{equation}
where $|s_nt^n\omega_{2n}(x,y)|\leq p^{-n}|\omega_{2n}(x,y)|\leq p^{-n}|x|\leq p^{-n_0}|x|$
for all $n\geq n_0$, using~(\ref{very-basic}) for the second inequality.
We deduce that each finite partial sum in~(\ref{partsum})
has absolute value $\leq p^{-n_0}|x|$, whence so does the limit $\eta_s(x,y)$.
\end{proof}
\begin{la}\label{centreA}
For each $s\in \{0,1\}^\N$ with $s\not=0$,
the centre of $A\times_{\eta_s} A$ is $A\times\{0\}$.
If $s=0$, then $A\times_{\eta_s}A=A\times A$ is abelian.
\end{la}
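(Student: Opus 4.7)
If $s=0$ then $\eta_s\equiv 0$, so the group law on $A\times_{\eta_s} A$ reduces to coordinatewise addition and the group is abelian; I therefore concentrate on the case $s\neq 0$. Let $n_0:=\min\{n\in\N\colon s_n=1\}$. The group law on $A\times_{\eta_s} A$ reads
\[
(x_1,y_1)(x_2,y_2)=(x_1+x_2+\eta_s(y_1,y_2),\; y_1+y_2),
\]
so $(x,y)\in Z(A\times_{\eta_s}A)$ if and only if $\eta_s(y,y')=\eta_s(y',y)$ for every $y'\in A$. Since $\eta_s$ is biadditive, $\eta_s(0,y')=\eta_s(y',0)=0$, which gives the easy inclusion $A\times\{0\}\sub Z(A\times_{\eta_s} A)$. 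The substance of the lemma is the reverse inclusion, which reduces to the following claim: for every $y\in A\setminus\{0\}$, there exists $y'\in A$ with $\eta_s(y,y')\neq \eta_s(y',y)$.

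To establish this claim I plan to take $y'=t^k$ for a carefully chosen $k\in\Z$ depending on the leading exponent of~$y$. Writing $y=\sum_{j\geq j_0}y_jt^j$ with $y_{j_0}\not=0$, set $k:=j_0+2n_0$. From the formula $\omega_{2n}(y,t^k)=y_{k-2n}t^{k-2n}$ (a direct consequence of~(\ref{omegam-1})), the expansion
\[
\eta_s(y,t^k)=\sum_{n\geq n_0}s_n\, y_{k-2n}\, t^{k-n}
\]
collapses to a single nonzero summand: for $n>n_0$ the coefficient $y_{k-2n}=y_{j_0+2(n_0-n)}$ vanishes by minimality of~$j_0$, so $\eta_s(y,t^k)=y_{j_0}t^{j_0+n_0}$, of absolute value $p^{-j_0-n_0}$. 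On the other hand Lemma~\ref{main-estimate} applied with first argument $t^k$ gives
\[
|\eta_s(t^k,y)|\leq p^{-n_0}|t^k|=p^{-j_0-3n_0},
\]
which is strictly smaller since $n_0\geq 1$. By the ultrametric inequality~(\ref{ultra}), these two absolute values being unequal forces $\eta_s(y,t^k)\neq \eta_s(t^k,y)$, finishing the proof.

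The main obstacle is really just careful bookkeeping in the two displays above: the decisive asymmetry is already built into the cocycles $\omega_{2n}$, in which the index of the second argument is shifted by~$+2n$ relative to that of the first. The choice $k=j_0+2n_0$ is engineered precisely so that, on the $(y,t^k)$ side, the shift~$+2n_0$ lands exactly on the leading term $y_{j_0}$ of~$y$ (with higher $n$ forced into the zero-support of~$y$), while on the $(t^k,y)$ side the shift pushes every contribution into strictly higher powers of~$t$ than $t^{j_0+n_0}$. Once this pairing is unpacked, the comparison of absolute values is immediate from Lemma~\ref{main-estimate}.
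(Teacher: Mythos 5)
Your proposal is correct and follows essentially the same route as the paper's proof: the same characterization of the centre via the symmetry condition $\eta_s(y,y')=\eta_s(y',y)$, the same test element $t^{j_0+2n_0}$ built from the leading exponent of $y$ and the minimal index $n_0$ with $s_{n_0}\neq 0$, and the same comparison of absolute values using Lemma~\ref{main-estimate}. The only differences are cosmetic (you spell out the collapse of the sum and invoke the sharper bound $p^{-n_0}|x|$ where the paper uses the weaker strict inequality).
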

\begin{proof}
The last assertion being trivial, let us assume that $s\not=0$ now.
Let $x=(a_1,b_1)\in A\times_{\eta_s}A$.
Then $xy=(a_1+a_2+\eta_s(b_1,b_2),b_1+b_2)$
and $yx=(a_1+a_2+\eta_s(b_2,b_1),b_1+b_2)$ for all $(a_2,b_2)\in A\times_{\eta_s} A$,
whence $x$ is in the centre of $A\times_{\eta_s} A$ if and only if
\begin{equation}\label{charzenta}
\eta_s(b_1,b_2)=\eta_s(b_2,b_1)\;\;\mbox{for all $b_2\in A$.}
\end{equation}
This condition is satisfied if $b_1=0$. We show that, conversely,
(\ref{charzenta}) implies $b_1=0$, from which the assertion follows.
In fact, if $b_1\not=0$, then $b_1=\sum_{i=i_0}^\infty x_it^i$
for some $i_0\in\Z$ and elements $x_i\in \bF_p$ with $x_{i_0}\not=0$.
Let $n_0\in\N$ be minimal subject to $s_{n_0}\not=0$.
Then
\[
\eta_s(b_1,t^{2n_0+i_0})=s_{n_0} t^{n_0} x_{i_0} t^{i_0}
\]
with $|\eta_s(b_1,t^{2n_0+i_0})|=p^{-n_0-i_0}$
but
$|\eta_s(t^{2n_0+i_0},b_1)|< |t^{2n_0+i_0}|=p^{-2n_0-i_0}<p^{-n_0-i_0}$,
by Lemma~\ref{main-estimate}.
Thus $\eta_s(t^{2n_0+i_0},b_1)\not=\eta_s(b_1,t^{2n_0+i_0})$.
\end{proof}
{\bf Proof of Theorem~\ref{mainres}.}
Let $\psi\colon A\times_{\eta_s} A\to A\times_{\eta_r} A$
be as described in the theorem. Lemma~\ref{centreA}
implies that $r=0$ if and only if $s=0$. We may therefore assume now that
both~$r$ and~$s$ are non-zero.\\[2.3mm]
The isomorphism $\psi$ maps the centre of
$A\times_{\eta_s} A$ onto the centre of $A\times_{\eta_r} A$.
Using Lemma~\ref{centreA}, we deduce that
\begin{equation}\label{prerequ}
\psi(A\times \{0\})=A\times \{0\}.
\end{equation}
In view of (\ref{prerequ}), we can apply Lemma~\ref{lessknown},
and deduce that there are $\sigma,\tau\in\Aut(A,\alpha)$
and $\beta\in B^2_{\eq}(A,A)$ such that
\[
\eta_r=\sigma\circ\eta_s\circ (\tau\times\tau)+\beta.
\]
By Lemma~\ref{eqAuto},
there exist $a,b\in\bF_p(\!(t)\!)^\times$ such that
$\sigma(x)=ax$ and $\tau(x)=bx$ for all $x\in A$.
Thus
\[
\eta_r(x,y)=a\eta_s(bx,by)+\beta(x,y)\;\,\mbox{for all $x,y\in A$.}
\]
Let $k\in\Z$ such that $|t^kb|=1$.
Since $\eta_s(bx,by)=t^{-k}\eta_s(t^kbx,t^kby)$ for all $x,y\in A$
by equivariance, after replacing $b$ with $t^kb$ and $a$ with $t^{-k}a$ we
may assume that $|b|=1$.
There exists a continuous function $f\colon A\to A$ with $f(0)=0$ such that
\[
\beta(x,y)=f(x)+f(y)-f(x+y)\;\,\mbox{for all $x,y\in A$.}
\]
Then $\beta$ is symmetric:
\[
\beta(x,y)=\beta(y,x)\quad\mbox{for all $x,y\in A$.}
\]
Hence, we have
\begin{equation}\label{ingr2}
\eta_r(x,y)-\eta_r(y,x)=a\eta_s(bx,by)-a\eta_s(by,bx)\;\,\mbox{for all $x,y\in A$.}
\end{equation}
Let $n_0\in\N$ be minimal such that $r_{n_0}\not=0$;
let $m_0\in\N$ be minimal with $s_{m_0}\not=0$.
We claim that
\begin{equation}\label{size}
|\eta_r(x,y)-\eta_r(y,x)|=p^{-n_0}
\end{equation}
for all $x,y\in A$ such that $|x|=1$ and $|y|=p^{-2n_0}$.
If this is true, then the left-hand side of the identity
\begin{equation}\label{ingr3}
\eta_r(b^{-1}t^0,b^{-1}t^{2n_0})\!-\!\eta_r(b^{-1}t^{2n_0},b^{-1}t^0)=
a\eta_s(t^0,t^{2n_0})\!-\!\underbrace{a\eta_s(t^{2n_0},t^0)}_{=0}=at^{n_0}s_{n_0}
\end{equation}
(which follows from (\ref{ingr2}) and (\ref{procomp2})) has absolute value
$p^{-n_0}$, whence it is non-zero.
Hence also the right-hand side is non-zero and thus $s_{n_0}\not=0$,
whence $m_0\leq n_0$. Hence $s_{n_0}=1$,
whence the right-hand side of~(\ref{ingr3}) has absolute value $|a|p^{-n_0}$.
As the left-hand side has absolute value~$p^{-n_0}$,
we deduce that
\begin{equation}\label{moda}
|a|=1.
\end{equation}
Reversing the roles of~$r$ and~$s$,
we see that also $n_0\leq m_0$ and thus $n_0=m_0$.\\[2.3mm]
To establish the claim (\ref{size}), write $x=\sum_{i=0}^\infty x_it^i$
and $y=\sum_{j=2n_0}^\infty y_jt^j$
with $x_i,y_j \in \bF_p$ and $x_0\not=0$ as well as $y_{2n_0}\not=0$.
Then
\begin{equation}\label{ingr4}
\eta_r(x,y)=\eta_r(x_0t^0,y_{2n_0}t^{2n_0})+\eta_r(x_0t^0,y')+\eta_r(x',y)
\end{equation}
with $x':=\sum_{i=1}^\infty x_it^i$ and $y':=\sum_{j=2n_0+1}^\infty y_jt^j$.
The first summand in (\ref{ingr4}) is $t^{n_0}r_{n_0} x_0y_{2n_0}=t^{n_0}x_0y_{2n_0}$,
which has absolute value $p^{-n_0}$.
Since $\omega_{2n_0}(x_0t^0,y')=0$, the second summand in (\ref{ingr4}) is
\[
\sum_{n=n_0+1}^\infty r_nt^n\omega_{2n}(x_0t^0,y');
\]
by Lemma~\ref{main-estimate}, its absolute value is
$\leq p^{-n_0-1}|x_0t^0|=p^{-n_0-1}<p^{-n_0}$.
As $|x'|\leq p^{-1}$, Lemma~\ref{main-estimate} yields
$|\eta_r(x',y)|\leq p^{-n_0}|x'|\leq p^{-n_0-1}<p^{-n_0}$
for the third summand in (\ref{ingr4}).
Using~(\ref{ultra}),
we deduce that
\begin{equation}\label{ingr5}
|\eta_r(x,y)|=p^{-n_0}.
\end{equation}
Now $|\eta_r(y,x)|\leq p^{-n_0}|y|=p^{-n_0}p^{-2n_0}<p^{-n_0}$,
by Lemma~\ref{main-estimate}.
Thus, using~(\ref{ultra}), we find that
$|\eta_r(x,y)-\eta_r(y,x)|=|\eta_r(x,y)|=p^{-n_0}$.
The claim is established.\\[2.3mm]
We now show that $r_m=s_m$ for all $m>n_0$ (which completes the proof).
Given~$m$, we claim that, for all $x,y\in A$ with $|x|=1$ and $|y|=p^{-2m}$,
the absolute value
$|\eta_r(x,y)-\eta_r(y,x)|$
equals $p^{-m}$ if $r_m\not=0$,
but is $<p^{-m}$ if $r_m=0$.
If this is true, then the left-hand side of the identity
\[
|\eta_r(b^{-1}t^0,b^{-1}t^{2m})-\eta_r(b^{-1}t^{2m},b^{-1}t^0)|=
|a\eta_s(t^0,t^{2m})| =p^{-m}|s_m|
\]
has absolute value $p^{-m}$ if and only if $r_m\not=0$.
As the right-hand side has absolute value~$p^{-m}$
if and only if $s_m\not=0$, we deduce that $r_m=s_m$.\\[2.3mm]
To establish the claim, write $x=\sum_{i=0}^\infty x_it^i$
and $y=\sum_{j=2m}^\infty y_jt^j$
with $x_i,y_j \in \bF_p$ and $x_0\not=0$ as well as $y_{2m}\not=0$.
For $j\in\N$ such that $j<2m$, set $y_j:=0$.
Then
\begin{equation}\label{ingr8}
\eta_r(x,y)=\eta_r(x_0t^0,y_{2m}t^{2m})+\eta_r(x_0t^0,y')+\eta_r(x',y)
\end{equation}
with $x':=\sum_{i=1}^\infty x_it^i$ and $y':=\sum_{j=2m+1}^\infty y_jt^j$.
The first summand in (\ref{ingr8}) is $t^m r_m x_0y_{2m}$;
its absolute value is $p^{-m}$ if $r_m\not=0$ and vanishes if $r_m=0$. 
Since $\omega_{2n}(x_0t^0,y')=0$ for $n\in\{1,\ldots, m\}$,
the second summand is
\[
\sum_{n=m+1}^\infty r_nt^n\omega_{2n}(x_0t^0,y')
=\sum_{n=m+1}^\infty r_nt^nx_0 y_{2n},
\]
whence it has absolute value $\leq p^{-m-1}<p^{-m}$.
Since $y_{i+2n}\not=0$ implies $i+2n\geq 2m$
and thus $i\geq 2(m-n)$,
the third summand is
\begin{equation}\label{helpsexpla}
\sum_{n=n_0}^\infty\sum_{i=1}^\infty
r_nt^{n+i} x_iy_{i+2n}
=
\sum_{n=n_0}^\infty\sum_{i=\max\{1,2(m-n)\}}^\infty
r_nt^{n+i} x_iy_{i+2n}.
\end{equation}
For $n\in \{n_0,\ldots, m-1\}$, we have $2(m-n)\geq 2$
and thus $\max\{1,2(m-n)\}=2(m-n)$, entailing that
$n+i\geq n+2(m-n)=2m-n\geq 2m-(m-1)=m+1$ and thus
$|t^{n+i}|\leq p^{-m-1}$
on the right hand side of~(\ref{helpsexpla}).
For $n\geq m$, we have $2(m-n)\leq 0$
and thus $\max\{1,2(m-n)\}=1$, whence $n+i\geq n+1\geq m+1$
and hence $|t^{n+i}|\leq p^{-m-1}$.
As a consequence, the third summand in~(\ref{ingr8})
has absolute value $\leq p^{-m-1}$.
Applying~(\ref{ultra}) to the summands in~(\ref{ingr8}),
we see that
\begin{equation}\label{twocase}
\mbox{$|\eta_r(x,y)|=p^{-m}$ if $r_m\not=0$,
while $|\eta_r(x,y)|\leq p^{-m-1}<p^{-m}$ if $r_m=0$,}
\end{equation}
by the ultrametric inequality.
By Lemma~\ref{main-estimate}, we have
\begin{equation}\label{prefin}
|\eta_r(y,x)|<|y|=p^{-2m}<p^{-m}.
\end{equation}
Using (\ref{ultra}) and the ultrametric inequality,
the claim follows from~(\ref{twocase}) and~(\ref{prefin}).\Punkt
\section{Open Problems}\label{secnilp}
If~$p$ is a prime and $(G,\alpha)$ is a
torsion contraction group all of whose composition factors are isomorphic
to $\bF_p(\!(t)\!)$, then $G$ is soluble (as all composition factors are abelian)
and~$G$ has a compact open subgroup which is a pro-$p$-group
and hence pro-nilpotent. The following question is natural:\\[4mm]
\emph{Question} 1. Let $p$ be a prime and $(G,\alpha)$ be a torsion contraction group all of whose composition factors
are isomorphic to $\bF_p(\!(t)\!)$. Will $G$ be nilpotent?\\[2.3mm]
The answer is positive if $G$ is an analytic Lie group over a local field
of positive characteristic and $\alpha$ an analytic automorphism~\cite{Glo},
but the general case is open. The case of two composition factors
should be tackled first:\footnote{The authors are grateful to P.-E. Caprace
for prompting a similar question.}\\[4mm]
\emph{Question} 2. Let $p$ be a prime and $(\wh{G},\alpha)$ be a torsion contraction group
which is an extension $\{e\}\to \bF_p(\!(t)\!)\to \wh{G}\to \bF_p(\!(t)\!)\to\{e\}$.
Will $\wh{G}$ be nilpotent?\\[4mm]
We mention some immediate observations.
\begin{prop}\label{reformu}
The following conditions are equivalent.
\begin{itemize}
\item[{\rm(a)}]
There exists an extension $\{e\}\to \bF_p(\!(t)\!)\to \wh{G}\to\bF_p(\!(t)\!)\to
\{e\}$ of contraction groups such that $\wh{G}$ is not nilpotent.
\item[{\rm(b)}]
There exists an extension $\{e\}\to\bF_p(\!(t)\!)\stackrel{\iota}{\to}
\wh{G}\to\bF_p(\!(t)\!)\to\{e\}$ of contraction groups
such that $\wh{G}$ has
trivial centre, $Z(\wh{G})=\{e\}$.
\item[{\rm(c)}]
There exists a continuous homomorphism
\[
\gamma\colon (\bF_p(\!(t)\!),+)\to
\Aut(\bF_p(\!(t)\!),+)
\]
to the topological group of all automorphisms of the locally compact group $(\bF_p(\!(t)\!),+)$ such that
\begin{equation}\label{equihere}
\gamma(tx)(ty)=t\gamma(x)(y)\;\;
\mbox{for all $x,y\in\bF_p(\!(t)\!)$}
\end{equation}
and $\gamma(z)\not=\id$ for some $z\in\bF_p(\!(t)\!)$.
\end{itemize}
\end{prop}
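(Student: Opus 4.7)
My plan is to prove (a)$\Leftrightarrow$(b) by direct appeal to Lemma~\ref{possible-c}, and (b)$\Leftrightarrow$(c) by turning an extension into its associated conjugation action (using the section from Theorem~C to get continuity) and, conversely, turning $\gamma$ into a semidirect product. The simplicity of $\bF_p(\!(t)\!)$ as a contraction group (Lemma~\ref{lin-latt} with $m=1$, or Theorem~F) will be used repeatedly.

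For (a)$\Leftrightarrow$(b): since $\iota(\bF_p(\!(t)\!))\neq\{e\}$ gives $\wh G\neq\{e\}$, the hypothesis $Z(\wh G)=\{e\}$ immediately forces $\wh G$ to be non-nilpotent, yielding (b)$\Rightarrow$(a). Conversely, both the kernel and quotient are simple abelian totally disconnected contraction groups, and $\wh G$ is automatically totally disconnected as an extension of totally disconnected groups; hence Lemma~\ref{possible-c} applies, and a non-trivial centre forces $\wh G$ to be abelian or $2$-step nilpotent---in either case nilpotent---which gives (a)$\Rightarrow$(b) by contraposition.

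For (b)$\Rightarrow$(c): write $A:=\iota(\bF_p(\!(t)\!))$. Since $A$ is abelian and normal in $\wh G$, conjugation descends to a homomorphism $\gamma\colon G\to\Aut(A,+)$ given by $\gamma(q(\tilde x))(a)=\tilde x a\tilde x^{-1}$, independent of the lift $\tilde x$. Taking a continuous equivariant section $\sigma\colon G\to\wh G$ (Theorem~C), the map $(x,a)\mapsto\sigma(x)a\sigma(x)^{-1}$ is continuous, so $\gamma$ is continuous by~\ref{goodStr}. Equivariance of $\sigma$ yields $\gamma(\alpha(x))(\alpha(a))=\alpha(\sigma(x))\alpha(a)\alpha(\sigma(x))^{-1}=\alpha(\gamma(x)(a))$, which with $\alpha(z)=tz$ is exactly~(\ref{equihere}). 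If $\gamma$ were trivial, then $A$ would lie in $Z(\wh G)$, contradicting (b); hence $\gamma$ is non-trivial.

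For (c)$\Rightarrow$(b): form $\wh G:=A\rtimes_\gamma G$ with $A=G=\bF_p(\!(t)\!)$ and define $\wh\alpha(a,g):=(ta,tg)$. Condition~(\ref{equihere}) is exactly what is required for $\wh\alpha$ to be a group homomorphism, and $\wh\alpha$ is manifestly contractive, so $\wh G$ is a locally compact contraction group and the obvious maps form an extension. A direct calculation gives $Z(\wh G)=A^\gamma\times\ker\gamma$, where $A^\gamma:=\{a\in A:\gamma(g)(a)=a \text{ for all } g\in G\}$. Both factors are closed by continuity of $\gamma$, and both are $\alpha$-stable by~(\ref{equihere})---for instance, $a\in A^\gamma$ gives $\gamma(g)(ta)=t\gamma(t^{-1}g)(a)=ta$ for every $g\in G$, whence $ta\in A^\gamma$; the analogous argument works for $\ker\gamma$. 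By simplicity of $\bF_p(\!(t)\!)$, each factor is $\{0\}$ or the full group, and the non-triviality of $\gamma$ forces both to be proper, hence zero. Thus $Z(\wh G)=\{e\}$. The most delicate step is precisely this verification of $\alpha$-stability of $A^\gamma$ and $\ker\gamma$: it is the one place where~(\ref{equihere}) is really needed beyond making $\wh\alpha$ an automorphism, and it is what allows the simplicity of $\bF_p(\!(t)\!)$ to collapse both subgroups to $\{0\}$.
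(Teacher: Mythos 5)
Your proposal is correct and follows essentially the same route as the paper: (b)$\Rightarrow$(a) since non-trivial nilpotent groups have non-trivial centre, (a)$\Rightarrow$(b) via Lemma~\ref{possible-c}, (b)$\Rightarrow$(c) via the conjugation action made continuous and equivariant by the section from Theorem~C, and (c)$\Rightarrow$(b) via the semidirect product $\bF_p(\!(t)\!)\rtimes_\gamma\bF_p(\!(t)\!)$ with $(a,g)\mapsto(ta,tg)$. The only (cosmetic) difference is in the last step, where you compute $Z(\wh{G})=A^\gamma\times\ker\gamma$ directly and kill both factors by $\alpha$-stability and simplicity, whereas the paper simply notes $\bF_p(\!(t)\!)\times\{0\}$ is not central and invokes Lemma~\ref{possible-c} a second time; both arguments are valid.
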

\begin{proof}
Since every non-trivial nilpotent group has non-trivial centre,
(b)~implies~(a).
By Lemma~\ref{possible-c}, (a) implies~(b).
Next, (b) implies (c) since
$\wh{G}$ as in (b) has $\iota(\bF_p(\!(t)\!))\not\sub Z(\wh{G})$ and hence
induces a non-trivial continuous homomorphism
$\gamma\colon\bF_p(\!(t)\!)\to\Aut(\bF_p(\!(t)\!))$
as described in Example~\ref{moti}
(which satisfies (\ref{equihere})
due to~(\ref{cov2})).

(c)$\impl$(b): Let $\gamma$ be as in~(c).
Then $\bF_p(\!(t)\!)\times \{0\}$ is not contained in the centre of the semi-direct product
$\bF_p(\!(t)\!)\rtimes_\gamma \bF_p(\!(t)\!)$, whence the latter has trivial centre
(by Lemma~\ref{possible-c}).\footnote{Note that the semi-direct product is a contraction group with contractive automorphism $(x,y)\mto (tx,ty)$.}
\end{proof}
\begin{rem}\label{firema}
If $\gamma\colon \bF_p(\!(t)\!)\to\Aut(\bF_p(\!(t)\!))$
is a continuous homomorphism such that~(\ref{equihere}) holds,
define $\theta:=\gamma(t^0)$.
Then
\begin{equation}\label{limitcompo}
\gamma\left(\sum_{n=N}^\infty a_nt^n\right)=\lim_{n\to\infty}\gamma(t^n)^{a_n}\circ\cdots\circ \gamma(t^N)^{a_N}
\end{equation}
in $\Aut(\bF_p(\!(t)\!))$, for all $N\in\Z$ and $a_N,a_{N+1},\ldots \in\bF_p$. Moreover,
\begin{equation}\label{theothers}
\gamma(t^n)(x)=t^n\theta(t^{-n}x)\;\;\mbox{for all $n\in\Z$ and $x\in\bF_p(\!(t)\!)$.}
\end{equation}
Thus $\theta=\id$ implies $\gamma(x)=\id$ for all $x\in\bF_p(\!(t)\!)$.
Since $\gamma$ is a homomorphism and $\bF_p(\!(t)\!)$ is abelian,
we have $\gamma(t^n)\circ \gamma(t^m)=\gamma(t^m)\circ \gamma(t^n)$
for all $n,m\in\Z$. Moreover, $\gamma(x)^p=\gamma(px)=\gamma(0)=\id$ for all
$x\in \bF_p(\!(t)\!)$.
\end{rem}
\begin{rem}
If $\theta\not=\id$ in Remark~\ref{firema},
then $\theta$ cannot be of the most na\"{i}ve
form,
i.e., it cannot come from a permutation $\pi$ of~$\Z$ in the sense
that
\[
\theta(t^k)=t^{\pi(k)}\;\;\mbox{for all $k\in\Z$.}
\]
In fact, since $\theta\not=\id$ and the subgroup generated by $\{t^k\colon k\in\Z\}$
is dense in $\bF_p(\!(t)\!)$, we would have $\ell:=\pi(k)\not=k$ for some $k\in\Z$.
Since $\pi^p=\id$, after replacing $k$ by another element in its $\langle\pi\rangle$-orbit if necessary, we may
assume that $\ell>k$. Set $\Delta:=\ell-k>0$.
Then
\[
\gamma(t^{n\Delta})(t^{n\Delta+k})=\gamma(t^{n\Delta})(t^{n\Delta}t^k)=t^{n\Delta}\theta(t^k)=t^{(n+1)\Delta+k}\;\;\mbox{for all $n\in\N_0$,}
\]
using~(\ref{theothers}).
Thus
\[
(\gamma(t^{n\Delta})\circ\cdots\circ\gamma(t^\Delta)\circ \gamma(t^0))(t^k)=t^{(n+1)\Delta +k}.
\]
For $x:=\sum_{n=0}^\infty t^{n\Delta}\in\bF_p(\!(t)\!)$, we then have
\[
\gamma(x)(t^k)=\lim_{n\to\infty}(\gamma(t^{n\Delta})\circ\cdots\circ\gamma(t^\Delta)\circ \gamma(t^0))(t^k)=\lim_{n\to\infty}t^{(n+1)\Delta +k}=0.
\]
But $\gamma(x)(t^k)\not=0$ as $t^k\not=0$ and $\gamma(x)$ is an
automorphism, contradiction.
\end{rem}
\begin{rem}\label{interestfixed}
Every homomorphism~$\gamma$
as in Proposition~\ref{reformu}\,(c)
satisfies
\[
(\forall 0\not=y\in \bF_p(\!(t)\!))(\exists k\in\Z)\;\; \gamma(t^k)(y)\not=y.
\]
In fact, if there were $0\not=y\in \bF_p(\!(t)\!)$ such that $\gamma(t^k)(y)=y$
for all $k\in\Z$, then we would have $\gamma(x)(y)=y$ for all
$x\in\bF_p(\!(t)\!)$, by (\ref{limitcompo}).
Thus $(y,0)$ would be in the centre of the semidirect
product $\wh{G}:=\bF_p(\!(t)\!)\rtimes_\gamma\bF_p(\!(t)\!)$, since
\[
(a,x)(y,0)(a,x)^{-1}=(\gamma(x)(y),0)=(y,0)\;\;\mbox{for all $(a,x)\in\wh{G}$.}
\]
As a consequence, $\bF_p(\!(t)\!)\times\{0\}$ would be in the centre
of~$\wh{G}$, by Lemma~\ref{possible-c}, entailing that
$\gamma(x)=\id$ for each $x\in \bF_p(\!(t)\!)$.
We have reached a contradiction.
\end{rem}
\begin{rem}
Specializing to $p=2$, consider a continuous homomorphism
$\gamma \colon \bF_2(\!(t)\!)\to\Aut(\bF_2(\!(t)\!))$
such that~(\ref{equihere})
holds.
Then we find a non-zero joint fixed vector~$z$
for $\gamma(t^{k_1}),\ldots, \gamma(t^{k_n})$ in the $\bF_2$-vector space
$\bF_2(\!(t)\!)$
for each finite subset $\{k_1,\ldots, k_n\}\sub \Z$,
as we presently verify.
However, it is not clear whether we can find a non-zero joint fixed vector~$y$
for all $\gamma(t^k)$ with $k\in\Z$ simultaneously.
If $y$ always exists, then $\gamma(x)=\id$ for all $x\in\bF_2(\!(t)\!)$
(by the argument in Remark~\ref{interestfixed}),
whence every extension $\wh{G}$ of $\bF_2(\!(t)\!)$ by itself
would be nilpotent, by Proposition~\ref{reformu}.\\[2.3mm]
For the verification, we use that $\gamma(t^{k_j})^2=\id$ for $j\in\{1,\ldots,n\}$.
Start with a vector $0\not=y_0\in\bF_2(\!(t)\!)$.
We define non-zero elements $y_1,\ldots, y_n\in\bF_2(\!(t)\!)$
such that $y_j$ is a fixed vector for all of $\gamma(t^{k_1}),\ldots,\gamma(t^{k_j})$,
as follows:
Recursively, for $j\in\{1,\ldots, n\}$,
we take $y_j:=y_{j-1}$
if $\gamma(t^{k_j})(y_{j-1})=y_{j-1}$
and
$y_j:=y_{j-1}+\gamma(t^{k_j})(y_{j-1})$ otherwise.
Then $y_j$ is a fixed vector for~$\gamma(t^{k_j})$
and
also for $\gamma(t^{j_1}),\ldots,\gamma(t^{j-1})$,
as $y_{j-1}$ was a fixed vector for these
and $\gamma(t^{k_j})$ leaves the $1$-eigenspace of $\gamma(t^{k_i})$
invariant for all $i\in\{1,\ldots, j-1\}$
since $\gamma(t^{k_i})$ and $\gamma(t^{k_j})$ commute.
Now set $z:=y_n$.
\end{rem}
Nilpotency of contraction groups can be checked on open subgroups.
\begin{prop}\label{nilpvialoc}
Let $\alpha\colon G\to G$
be a contractive automorphism of a
topological
group~$G$.
Then the following holds:
\begin{itemize}
\item[\rm(a)]
If $G$ has a nilpotent open subgroup, then $G$ is nilpotent.
\item[\rm(b)]
If $G$ has a soluble open subgroup, then $G$ is soluble.
\end{itemize}
\end{prop}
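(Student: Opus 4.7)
\textbf{Proof plan for Proposition~\ref{nilpvialoc}.}
The plan is to exploit contractivity together with the fact that group homomorphisms intertwine iterated commutators, rather than searching for an $\alpha$-invariant nilpotent (respectively soluble) open subgroup inside~$U$ (whose existence would require additional hypotheses beyond those given). Since $U$ is an open neighbourhood of~$e$ and $\alpha^n(g)\to e$ for every $g\in G$, for each $g\in G$ there is some $N(g)\in\N_0$ with $\alpha^n(g)\in U$ for all $n\geq N(g)$. Given finitely many elements $g_1,\ldots,g_k\in G$, setting $N:=\max_i N(g_i)$ places $\alpha^N(g_1),\ldots,\alpha^N(g_k)$ simultaneously inside~$U$. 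This is the substitute for the generally unavailable assertion that $U$ itself is $\alpha$-stable.

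For~(a), suppose $U$ is nilpotent of class at most~$c$, so $C^{c+1}(U)=\{e\}$. Given $g_1,\ldots,g_{c+1}\in G$, choose~$N$ as above. Since $\alpha^N$ is a group automorphism, it intertwines iterated commutators, so
\[
\alpha^N\bigl([g_1,[g_2,\ldots,[g_c,g_{c+1}]\cdots]]\bigr)=[\alpha^N(g_1),[\alpha^N(g_2),\ldots,[\alpha^N(g_c),\alpha^N(g_{c+1})]\cdots]]\in C^{c+1}(U)=\{e\}.
\]
Injectivity of $\alpha$ therefore forces every $(c+1)$-fold left-nested commutator in~$G$ to be trivial. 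A routine induction on~$k$, using the identity $[x,yz]=[x,y]\cdot y[x,z]y^{-1}$ together with the observation that a conjugate of a left-nested commutator is again a left-nested commutator (in the conjugated entries), shows that $C^k(G)$ is generated as a subgroup by its $k$-fold left-nested commutators. Hence $C^{c+1}(G)=\{e\}$ and $G$ is nilpotent of class at most~$c$.

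Part~(b) proceeds entirely in parallel, with the derived series in place of the lower central series. If $U^{(d)}=\{e\}$, then any iterated derived commutator of depth~$d$ in elements $g_1,\ldots,g_{2^d}\in G$ can be transported into~$U$ by a single $\alpha^N$; the corresponding depth-$d$ derived commutator in $\alpha^N(g_1),\ldots,\alpha^N(g_{2^d})$ then lies in $U^{(d)}=\{e\}$, and injectivity of $\alpha$ forces the original commutator to be trivial. Analogous rewriting identities yield that $G^{(d)}$ is generated by such depth-$d$ iterated derived commutators, whence $G^{(d)}=\{e\}$ and $G$ is soluble.

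The only part requiring any real care is the bookkeeping fact, used at the end of each argument, that $C^k(G)$ (respectively $G^{(d)}$) is actually generated as a subgroup by the appropriate iterated commutators, and not merely by products of conjugates of shorter commutators. This is a standard exercise in combinatorial group theory and is not expected to be the main obstacle; all of the contraction-group content is captured by the simultaneous-transport idea in the first paragraph.
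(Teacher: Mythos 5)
Your proof is correct, and it rests on exactly the same contraction-group input as the paper's: since $\alpha^n(g)\to e$ pointwise, a single power $\alpha^N$ transports any finite collection of elements simultaneously into the open subgroup $U$, and injectivity of $\alpha$ then kills the relevant commutator expression. Where you differ is in the reduction from commutator expressions to the subgroups $C^{c+1}(G)$ and $G^{(d)}$: you argue element-by-element that every iterated commutator of the appropriate shape is trivial, and then invoke the combinatorial fact that $C^{k}(G)$ is generated by $k$-fold nested commutators and $G^{(d)}$ by the depth-$d$ tree-shaped commutators (both facts are true and standard, via $[x,yz]=[x,y]\cdot y[x,z]y^{-1}$, $[x,y^{-1}]$ identities, and the observation that conjugates of such commutators have the same shape, since the generating sets involved are conjugation-invariant). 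The paper avoids this bookkeeping entirely: it passes to the finitely generated subgroup $H=\langle\Phi\rangle$, notes that $\alpha^m(H)\subseteq U$ forces $\alpha^m(C^n(H))\subseteq C^n(U)=\{e\}$ and $\alpha^m(H^{(n)})\subseteq U^{(n)}=\{e\}$ without any analysis of generators, and then uses its Lemma~\ref{ladircomm} (commutator subgroups commute with directed unions) to conclude $C^n(G)=\bigcup_F C^n(F)=\{e\}$, resp.\ $G^{(n)}=\bigcup_F F^{(n)}=\{e\}$, over the directed family of finitely generated subgroups. So your route trades the paper's directed-union lemma for the generation-by-iterated-commutators lemma; both are legitimate, but the step you flag as "a standard exercise" is precisely the work the paper's finitely-generated-subgroup device is designed to sidestep, so if you keep your version you should either prove that generation fact or cite a reference for it (it is not needed in the paper's argument).
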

Our proof uses a well-known fact (whose simple proof is left to the reader):
\begin{la}\label{ladircomm}
Let $\cA$ and $\cB$ be sets of subgroups of a group~$G$
which are directed upward under inclusion.
Then
$\cC:=\{[A,B]\colon (A,B)\in \cA\times \cB\}$
is directed under inclusion, the unions
$\cup\cA:=\bigcup_{A\in\cA}A$, $\cup\,\cB$, and $\cup\,\cC$
are subgroups of~$G$, and
$[\cup\cA,\cup\cB]=\cup\, \cC$.\Punkt
\end{la}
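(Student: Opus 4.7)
The plan is to verify the three assertions in order, each being a routine consequence of directedness and the definitions.

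First, I would check that $\cC$ is directed under inclusion. Given $[A_1,B_1]$ and $[A_2,B_2]$ in $\cC$, directedness of $\cA$ and $\cB$ yields $A\in\cA$ with $A_1\cup A_2\sub A$ and $B\in\cB$ with $B_1\cup B_2\sub B$. Since $aba^{-1}b^{-1}\in [A,B]$ whenever $a\in A_i\sub A$ and $b\in B_i\sub B$, and $[A_i,B_i]$ is generated by such commutators, one gets $[A_i,B_i]\sub [A,B]$ for $i=1,2$, showing $[A,B]\in\cC$ dominates both.

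Next, I would verify that the directed union of any collection of subgroups of $G$ is itself a subgroup. For $\cup\cA$: given $x,y\in \cup\cA$, pick $A_1,A_2\in\cA$ with $x\in A_1$, $y\in A_2$, and then $A\in\cA$ with $A_1\cup A_2\sub A$; then $xy^{-1}\in A\sub \cup\cA$. The identical argument applies to $\cup\cB$ and, using the first step, to $\cup\cC$.

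Finally, I would prove the equality $[\cup\cA,\cup\cB]=\cup\cC$ by two inclusions. For $\supseteq$: each $[A,B]\in\cC$ is generated by elements of the form $aba^{-1}b^{-1}$ with $a\in A\sub \cup\cA$ and $b\in B\sub \cup\cB$, which lie in $[\cup\cA,\cup\cB]$; since the latter is a subgroup, $[A,B]\sub [\cup\cA,\cup\cB]$, and then $\cup\cC\sub [\cup\cA,\cup\cB]$. For $\sub$: every generator $aba^{-1}b^{-1}$ of $[\cup\cA,\cup\cB]$ has $a\in A$ and $b\in B$ for some $A\in\cA$, $B\in\cB$, so it lies in $[A,B]\sub \cup\cC$; since $\cup\cC$ is a subgroup by the previous step, the subgroup generated by these commutators is contained in $\cup\cC$.

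There is no real obstacle here; the only point requiring any care is making sure one uses directedness both of $\cA$ and of $\cB$ simultaneously in step one (to land in a single $[A,B]\in\cC$ rather than just in a product of two such commutator subgroups), and that $\cup\cC$ is known to be a subgroup before invoking it as the target of a subgroup generated by commutators in the final inclusion.
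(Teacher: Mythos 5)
Your proof is correct and complete; the paper explicitly leaves this lemma's proof to the reader as a "well-known fact," and your argument is exactly the standard one intended, with the right attention to the two points that need care (using directedness of both $\cA$ and $\cB$ to land in a single $[A,B]$, and establishing that $\cup\,\cC$ is a subgroup before using it to absorb the subgroup generated by the commutators).
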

{\bf Proof of Proposition~\ref{nilpvialoc}.}
Let $U$ be an open subgroup of~$G$ which is nilpotent
(resp., soluble).
Let $n\in\N$ such that $C^n(U)=\{e\}$
(resp., $U^{(n)}=\{e\}$).
Let $\Phi\sub G$ be a finite subset and
$H:=\langle \Phi\rangle$.
Since $\alpha$ is contractive, there is $m\in\N$ such that
$\alpha^m(\Phi)\sub U$ and thus also
$\alpha^m(H)\sub U$.
Then
\[
\alpha^m(C^n(H))\sub C^n(U)\quad\mbox{and}\quad
\alpha^m(H^{(n)})\sub U^{(n)},
\]
showing that $C^n(H)=\{e\}$ (resp.,
$H^{(n)}=\{e\}$).
Since $G$ is the union of its directed set $\cF$ of
finitely generated subgroups, Lemma~\ref{ladircomm}
implies that
\[
C^n(G)=\bigcup_{F\in\cF}C^n(F)=\{e\},\quad\mbox{resp.,}\quad
G^{(n)}=\bigcup_{F\in\cF}F^{(n)}=\{e\}.\vspace{-1mm}
\]
Thus $G$ is nilpotent (resp., soluble).\,\Punkt
\appendix
\section{Extensions of contraction groups with\\
abelian kernel, and cohomology}\label{cohom}
Given a group $G$ and a homomorphism $\gamma\colon G\to\Aut(A)$
to the automorphism group of an abelian group~$A$,
it is well-known that the cohomology classes $[\omega]\in H^2(G,A)$
of $2$-cocycles $\omega\colon G\times G\to A$
parametrize the equivalence classes of those group extensions
\begin{equation}
\{e\}\to A \stackrel{\iota}{\to} \widehat{G}\stackrel{q}{\to} G\to\{e\}
\end{equation}
which \emph{induce $\gamma$} in the sense that
\begin{equation}\label{selection}
\iota(\gamma(q(x))(a))=x\iota(a)x^{-1}\;\;\mbox{for all $a\in A$ and $x\in \widehat{G}$.}
\end{equation}
In this appendix, we give an analogous treatment for extensions
of contraction groups with abelian kernel. We focus on the specific aspects of this
situation and omit calculations which are analogous
to the classical case of group extensions,
as treated in many books (like \cite[Chapter IV.3]{Bro};
cf.\ \cite{Rob} and~\cite{Rot}).
\begin{numba}\label{thesetapp}
Throughout the appendix, $(A,\alpha_A)$ and $(G,\alpha)$
are locally compact contraction groups,
such that~$A$ is abelian.
We fix a continuous homomorphism
\[
\gamma\colon G\to\Aut(A)
\]
to the group of
automorphisms
of the locally compact group~$A$,
such that
\begin{equation}\label{covar}
\gamma(\alpha(g))=\alpha_A\circ \gamma(g)\circ \alpha_A^{-1}\quad\mbox{for all $g\in G$.}
\end{equation}
Then $G\times A\to A$, $(g,a)\mto g.a:=\gamma(g)(a)$ is a continuous action
of~$G$ on~$A$ and (\ref{covar}) is equivalent to the condition
\begin{equation}\label{cov2}
\alpha(g).\alpha_A(a)=\alpha_A(g.a)\quad\mbox{for all $a\in A$ and $g\in G$.}
\end{equation}
The binary group multiplication on~$A$ will be written additively,
while multiplicative notation is used for~$G$.
The neutral elements are $0\in A$ and $e\in G$.
\end{numba}
\begin{defn}
A continuous mapping $\omega\colon G\times G\to A$
is called a \emph{$2$-cocycle of contraction groups}
if it is continuous, satisfies the \emph{cocycle condition}
\begin{equation}\label{cocy}
g_1.\omega(g_2,g_3)-\omega(g_1g_2,g_3)+\omega(g_1,g_2g_3)-\omega(g_1,g_2)=0\;\mbox{for all
$g_1,g_2,g_3\in G$}
\end{equation}
and is \emph{equivariant} in the sense that
\begin{equation}\label{cocyeq}
\omega(\alpha(g_1),\alpha(g_2))=\alpha_A(\omega(g_1,g_2))\quad\mbox{for all $g_1,g_2\in G$.}
\end{equation}
The set $Z^2_{\eq}(G,A)$ of all such~$\omega$
is a subgroup of the abelian group~$A^{G\times G}$.
\end{defn}
\begin{numba}
By the equivariance condition,
$\omega(e,e)=\omega(\alpha^n(e),\alpha^n(e))=\alpha_A^n(\omega(e,e))$ for all $n\in\N$.
As $\alpha_A$ is contractive, letting $n\to\infty$ we find that
\begin{equation}\label{cocnorma}
\omega(e,e)=0,
\end{equation}
i.e., the cocycles we consider are always normalized.
\end{numba}
\begin{numba}
Applying the cocycle condition~(\ref{cocy}) to
$(e,e,g)$, $(g,e,e)$ and $(g^{-1},g,g^{-1})$, respectively, in place of $(g_1,g_2,g_3)$,
we deduce that
\begin{eqnarray}
\omega(e,g)&=& \omega(e,e)\;\;\,=0,\label{helpf1}\\
\omega(g,e)&=&g.\omega(e,e)=0\;\;\mbox{and}\label{helpf2}\\
\omega(g^{-1}, g) &=& g^{-1}\!.\omega(g,g^{-1})\;\;\mbox{for all $g\in G$;}\label{helpf3}
\end{eqnarray}
these identities are useful for the omitted standard calculations.
\end{numba}
\begin{example}\label{moti}
Consider an extension $\{e\}\to A\stackrel{\iota}{\to}\wh{G}\stackrel{q}{\to} G\to\{e\}$
of contraction groups and let $\wh{\alpha}$ be the contractive automorphism of~$\wh{G}$.
Let $\sigma\colon G\to\wh{G}$
be an equivariant continuous section to~$q$, as provided by Theorem~C.
Then
\[
G\times A\to A,\quad g.a:=\iota^{-1}(\sigma(g)\iota(a)\sigma(g)^{-1})
\]
is a continuous left action of~$G$ on~$A$ such that $\gamma(g)\colon A\to A$, $\gamma(g)(a):=g.a$
is an automorphism of~$A$ for each $g\in G$. By \ref{goodStr}, the corresponding homomorphism
\[
\gamma\colon G\to \Aut(A),\quad g\mto \gamma(g)
\]
is continuous, and (\ref{cov2}) holds by construction.
A standard calculation shows that the map
\begin{equation}\label{assococ}
\omega\colon G\times G\to A,\quad (g_1,g_2)\mto
\iota^{-1}(\sigma(g_1)\sigma(g_2)\sigma(g_1g_2)^{-1})
\end{equation}
satisfies the cocycle condition~(\ref{cocy});
as $\omega$ is continuous and inherits~(\ref{cocyeq})
from the equivariance of~$\sigma$, we deduce that $\omega\in Z^2_{\eq}(G,A)$.
The map
\[
\pi\colon A\times G\to \wh{G},\quad (a,g)\mto \iota(a)\sigma(g)
\]
is a bijection. The unique group multiplication on $A\times G$ making
$\pi$ an isomorphism of groups is given by
\begin{equation}\label{theprodu}
(a_1,g_1)(a_2,g_2):= (a_1+g_1.a_2+\omega(g_1,g_2),g_1g_2)
\end{equation}
for $(a_1,g_1),(a_2,g_2)\in A\times G$,
as
\begin{eqnarray*}
\pi(a_1,g_1)\pi(a_2,g_2) &=& \iota(a_1)\sigma(g_1)\iota(a_2)\sigma(g_2)\\
&=&\iota(a_1)\sigma(g_1)\iota(a_2)\sigma(g_1)^{-1}\sigma(g_1)\sigma(g_2)\sigma(g_1g_2)^{-1}\sigma(g_1g_2)\\
&=&\pi(a_1+g_1.a_2+\omega(g_1,g_2),g_1g_2).
\end{eqnarray*}
Since $\pi(0,e)=e$,
the neutral element is $\pi^{-1}(e)=(0,e)$.
As the product topology on $A\times G$ makes~$\pi$ a homeomorphism,
it makes $A\times G$ a topological group, which we denote by $A\times_\omega G$.
The map $\alpha_A\times \alpha$
is a contractive automorphism of $A\times G$, since $\pi\circ (\alpha_A\times\alpha)=
\wh{\alpha}\circ \pi$.
\end{example}
Example~\ref{moti} motivates the following proposition, in
the setting of~\ref{thesetapp}.
\begin{prop}
Given $\omega\in Z^2_{\eq}(G,A)$, the binary operation described
in \emph{(\ref{theprodu})} makes $A\times G$ a
locally compact contraction group $A\times_\omega G$
when endowed with the product topology and the contractive automorphism
$\alpha_A\times \alpha$.
The mappings $\iota\colon A\to A\times_\omega G$, $a\mto (a,e)$
and $\pr_2 \colon A\times_\omega G\to G$, $(a,g)\mto g$
are morphisms of contraction groups and
\begin{equation}\label{exa}
\{e\}\to A\stackrel{\iota}{\to} A\times_\omega G\stackrel{\pr_2}{\to} G\to\{e\}
\end{equation}
is an extension of contraction groups which induces~$\gamma$.
The mapping\linebreak
$\sigma\colon G\to A\times_\omega G$,
$g\mto (0,g)$ is an equivariant continuous section for~$\pr_2$
such that\vspace{-2mm}
\begin{equation}\label{recovom}
\omega(g_1,g_2)=\sigma(g_1)\sigma(g_2)\sigma(g_1g_2)^{-1} \;\;
\mbox{for all $g_1,g_2\in G$}
\end{equation}
and $\iota(\gamma(g)(a))=\sigma(g)\iota(a)\sigma(g)^{-1}$ for all $g\in G$ and $a\in A$.
\end{prop}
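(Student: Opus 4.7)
The plan is to mirror the classical parametrization of group extensions by 2-cocycles, adding continuity and equivariance checks to handle the contraction-group setting. All verifications reduce to applications of the cocycle identity (\ref{cocy}), the normalizations (\ref{helpf1})--(\ref{helpf3}), and the equivariance conditions (\ref{cov2}) and (\ref{cocyeq}); no new ideas are required. First I would confirm that (\ref{theprodu}) gives a group structure on $A\times G$: associativity unpacks, on the $A$-component, to exactly the cocycle identity applied to $(g_1,g_2,g_3)$; the element $(0,e)$ is a two-sided identity thanks to (\ref{helpf1}) and (\ref{helpf2}); and solving $(a,g)(b,g^{-1})=(0,e)$ for $b$ produces $(a,g)^{-1}=(-g^{-1}.a-g^{-1}.\omega(g,g^{-1}),g^{-1})$, the left-inverse check being confirmed via (\ref{helpf3}).

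Next I would equip $A\times G$ with the product topology. Multiplication is continuous because $\omega$ is continuous by assumption and the action map $G\times A\to A$, $(g,a)\mapsto g.a$ is continuous by \ref{goodStr}; continuity of inversion follows from the explicit formula just derived, and local compactness is inherited from $A\times G$. To verify that $\alpha_A\times\alpha$ is a group homomorphism I would expand $(\alpha_A\times\alpha)((a_1,g_1)(a_2,g_2))$ and push $\alpha_A$ past the action of $G$ using (\ref{cov2}) and past $\omega$ using (\ref{cocyeq}); contractivity of $\alpha_A\times\alpha$ is then pointwise from the contractivity of each factor.

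The maps $\iota$ and $\pr_2$ are continuous equivariant homomorphisms by inspection, so (\ref{exa}) is an extension of contraction groups once one notes that $\iota$ is a topological embedding onto $A\times\{e\}=\ker(\pr_2)$. The section $\sigma(g):=(0,g)$ is continuous and satisfies $\sigma\circ\alpha=(\alpha_A\times\alpha)\circ\sigma$ trivially, and formula (\ref{recovom}) is immediate from the computation $\sigma(g_1)\sigma(g_2)=(\omega(g_1,g_2),g_1g_2)=\iota(\omega(g_1,g_2))\,\sigma(g_1g_2)$. Finally, using $\omega(g,e)=0$ one computes $(0,g)(a,e)(0,g)^{-1}=\iota(g.a)=\iota(\gamma(g)(a))$, so the extension induces $\gamma$ in the sense of (\ref{selection}). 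There is no genuine obstacle here; the argument is entirely bookkeeping, and the only real care required is to invoke the correct normalization identity at each edge case involving the identity element.
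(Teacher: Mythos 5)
Your proposal is correct and follows essentially the same route as the paper: the paper likewise treats the group axioms, the formula for inverses, and the identities (\ref{recovom}) and $\iota(\gamma(g)(a))=\sigma(g)\iota(a)\sigma(g)^{-1}$ as classical cocycle bookkeeping, deduces continuity of multiplication and inversion from the explicit formulas together with continuity of $\omega$ and of the $G$-action, uses (\ref{cov2}) (and, as you correctly make explicit, (\ref{cocyeq})) to see that $\alpha_A\times\alpha$ is an automorphism, and concludes with the equivariance of $\iota$ and $\pr_2$. Your added details (deriving the inverse via (\ref{helpf3}), noting $\iota$ is an embedding onto $\ker(\pr_2)$, and checking the conjugation formula on $\sigma(g)$) are exactly the omitted standard steps, so there is nothing to correct.
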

\begin{proof}
Classical calculations show that the binary operation (\ref{theprodu})
makes $A\times_\omega G$ a group with neutral element~$(0,e)$,
that $\iota$ and $\pr_2$ are group homomorphisms,
(\ref{recovom}) and the final identity hold,
and that (\ref{exa}) is an exact sequence of groups which induces~$\gamma$.
Inverses are given by
\begin{equation}\label{explinv}
(a,g)^{-1}=(-g^{-1}.a-g^{-1}.\omega(g,g^{-1}),g^{-1})\;\;\mbox{for
$(a,g)\in A\times_\omega G$.}
\end{equation}
Since $A$ and $G$ are topological groups and both $\omega$
and the left $G$-action on~$A$ are continuous,
we deduce from the explicit formulas (\ref{theprodu})
and~(\ref{explinv}) that $A\times_\omega G$ is a topological group.
Using (\ref{cov2}), one finds that the homeomorphism $\alpha_A\times \alpha$
is a homomorphism of groups and hence a contractive automorphism.
To complete the proof, it suffices to observe that $\pr_2\circ (\alpha_A\times \alpha)=\alpha\circ\pr_2$
and $(\alpha_A\times \alpha)\circ\iota=\iota\circ \alpha_A$. 
\end{proof}
\begin{defn}\label{defcobou}
A \emph{$2$-coboundary of contraction groups} is a map of the form
\[
\omega_f\colon G\times G\to A,\quad (g_1,g_2)\mto g_1.f(g_2)-f(g_1g_2)+f(g_1),
\]
where $f\colon G\to A$ is a continuous map which is \emph{equivariant} in the sense that
\begin{equation}\label{fequi}
f\circ\alpha=\alpha_A\circ f.
\end{equation}
\end{defn}
A standard calculation shows that~$\omega_f$ satisfies the cocycle
condition. As it is continuous and inherits the equivariance property~(\ref{cocyeq})
from~(\ref{fequi}), we deduce that~$\omega_f$
is a $2$-cocycle of contraction groups.
Thus, the set $B^2_{\eq}(G,A)$ of all~$\omega_f$ is a subgroup of the abelian
group $Z^2_{\eq}(G,A)$.
\begin{defn} We write
$H^2_{\eq}(G,A):=Z^2_{\eq}(G,A)/B^2_{\eq}(G,A)$
and call $[\omega]:=\omega+B^2_{\eq}(G,A)\in H^2_{\eq}(G,A)$
the \emph{cohomology class} of $\omega\in Z^2_{\eq}(G,A)$.
\end{defn}
We say that two extensions $\{e\}\to A\stackrel{\iota_1}{\to}\wh{G}_1\stackrel{q_1}{\to} G\to\{e\}$
and $\{e\}\to A\stackrel{\iota_2}{\to}\wh{G}_2\stackrel{q_2}{\to} G\to\{e\}$
of contraction groups inducing~$\gamma$ are \emph{equivalent}
if there exists an isomorphism of contraction groups $\psi\colon \wh{G}_1\to\wh{G}_2$
such that $\psi\circ \iota_1=\iota_2$ and $q_2\circ\psi=q_1$.
Given an extension $E\colon \{e\}\to A \stackrel{\iota}{\to} \wh{G}\stackrel{q}{\to} G\to\{e\}$
of contraction groups, let us write $[E]$ for its equivalence class
(or simply $[\wh{G}]$, if $\iota$ and $q$ are understood).
\begin{la}\label{sowelldef}
If $\omega_1,\omega_2\in Z^2_{\eq}(G,A)$ and $\omega_2-\omega_1\in B^2_{\eq}(G,A)$,
then $[A\times_{\omega_1}G]=[A\times_{\omega_2}G]$.
\end{la}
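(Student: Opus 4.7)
The plan is to exhibit a concrete equivalence $\psi\colon A\times_{\omega_1}G\to A\times_{\omega_2}G$ built from the equivariant $1$-cochain that witnesses cohomologousness, in direct analogy with the classical case of abstract group extensions.

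By hypothesis, $\omega_2-\omega_1=\omega_f$ for some continuous equivariant function $f\colon G\to A$ in the sense of Definition~\ref{defcobou}; that is, $f\circ\alpha=\alpha_A\circ f$ and
\[
\omega_2(g_1,g_2)-\omega_1(g_1,g_2)=g_1.f(g_2)-f(g_1g_2)+f(g_1)\quad\text{for all $g_1,g_2\in G$.}
\]
First I would check that $f(e)=0$: from $f(e)=f(\alpha^n(e))=\alpha_A^n(f(e))\to 0$ as $n\to\infty$, exactly as in~(\ref{cocnorma}). I would then define
\[
\psi\colon A\times_{\omega_1} G\to A\times_{\omega_2} G,\qquad \psi(a,g):=(a-f(g),g).
\]

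The key step is to verify that $\psi$ is a group homomorphism with respect to the two twisted products. Writing out both sides of $\psi((a_1,g_1)\cdot_{\omega_1}(a_2,g_2))=\psi(a_1,g_1)\cdot_{\omega_2}\psi(a_2,g_2)$ using~(\ref{theprodu}), the $g_1.f(g_2)$ term produced by the second component of $\psi(a_2,g_2)$ cancels against the corresponding term in $\omega_f(g_1,g_2)$, while the remaining $f(g_1)$ and $-f(g_1g_2)$ terms match the shifts $-f(g_1)$ and $-f(g_1g_2)$ coming from $\psi$ applied on the left and in the product. The continuity of $\psi$ is immediate from continuity of $f$, and $\psi$ is a homeomorphism with inverse $(a,g)\mto(a+f(g),g)$. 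Equivariance follows from equivariance of $f$:
\[
\psi\circ(\alpha_A\times\alpha)(a,g)=(\alpha_A(a)-f(\alpha(g)),\alpha(g))=(\alpha_A(a-f(g)),\alpha(g))=(\alpha_A\times\alpha)\circ\psi(a,g).
\]
Hence $\psi$ is an isomorphism of contraction groups.

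Finally, $f(e)=0$ gives $\psi\circ\iota_1(a)=\psi(a,e)=(a,e)=\iota_2(a)$, and $\pr_2\circ\psi=\pr_2$ holds by construction, so $\psi$ realizes an equivalence between the extensions $A\times_{\omega_1}G$ and $A\times_{\omega_2}G$. I do not anticipate any real obstacle; the only points that need a little care are the bookkeeping of signs in the homomorphism computation and the verification that $f(e)=0$, which I would handle as above. This is a direct adaptation of the classical abelian-kernel calculation, with the additional equivariance conditions on $f$, $\omega_i$, and $\psi$ all tracking through automatically because $\alpha_A$ is an additive automorphism and the $G$-action on $A$ intertwines $\alpha$ and $\alpha_A$ via~(\ref{cov2}).
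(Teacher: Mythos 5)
Your proposal is correct and follows essentially the same route as the paper: the paper's proof uses the map $(a,g)\mapsto(a+f(g),g)$ from $A\times_{\omega_2}G$ to $A\times_{\omega_1}G$, which is exactly the inverse of your $\psi$, and likewise reduces everything to the standard coboundary computation plus equivariance of $f$. Your extra checks (that $f(e)=0$ via contractivity of $\alpha_A$, and the compatibility with $\iota$ and $\pr_2$) are the "standard calculations" the paper leaves implicit, and they are carried out correctly.
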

\begin{proof}
We have $\omega_2-\omega_1=\omega_f$ for an equivariant continuous map
$f\colon G\to A$, as in Definition~\ref{defcobou}.
Now $\psi\colon A\times_{\omega_2}G\to A\times_{\omega_1}G$, $(a,g)\mto (a+f(g),g)$
is a homeomorphism and a homomorphism of groups, by standard calculations.
Since~$f$ is equivariant, $\psi$ is a morphism of contraction groups.
\end{proof}
\begin{la}\label{secstocob}
Let $\{e\}\to A\stackrel{\iota}{\to}\wh{G}\stackrel{q}{\to} G\to\{e\}$
be an extension of contraction groups inducing~$\gamma$.
Let $\sigma_j\colon G\to\wh{G}$ be an
equivariant continuous section for~$q$
and $\omega_j(g_1,g_2):=\iota^{-1}(\sigma_j(g_1)\sigma_j(g_2)\sigma_j(g_1g_2)^{-1})$
for $j\in\{1,2\}$ and $g_1,g_2\in G$. Then
$\omega_2-\omega_1\in B^2_{\eq}(G,A)$.
\end{la}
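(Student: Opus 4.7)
The plan is to follow the classical proof from group cohomology, adapted to the equivariant continuous setting. Since $q(\sigma_2(g)\sigma_1(g)^{-1})=e$, the element $\sigma_2(g)\sigma_1(g)^{-1}$ lies in $\iota(A)$, so I would define $f\colon G\to A$ by $f(g):=\iota^{-1}(\sigma_2(g)\sigma_1(g)^{-1})$, which is equivalent to $\sigma_2(g)=\iota(f(g))\sigma_1(g)$. Continuity of $f$ is immediate from the continuity of $\sigma_1$, $\sigma_2$, and the fact that $\iota$ is a homeomorphism onto its image (recall \ref{nicemorph}).

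Next, I would check that $f$ is equivariant in the sense of~(\ref{fequi}). Using $\sigma_j\circ\alpha=\wh{\alpha}\circ\sigma_j$ for $j\in\{1,2\}$, the fact that $\wh{\alpha}$ is a group homomorphism, and $\wh{\alpha}\circ\iota=\iota\circ\alpha_A$, a short calculation gives $\iota(f(\alpha(g)))=\wh{\alpha}(\sigma_2(g)\sigma_1(g)^{-1})=\iota(\alpha_A(f(g)))$, and hence $f\circ\alpha=\alpha_A\circ f$ by injectivity of $\iota$.

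For the coboundary identity $\omega_2-\omega_1=\omega_f$, I would expand $\sigma_2(g_1)\sigma_2(g_2)$ in two different ways. On the one hand, using $\sigma_2(g)=\iota(f(g))\sigma_1(g)$ together with the conjugation relation $\sigma_1(g_1)\iota(a)\sigma_1(g_1)^{-1}=\iota(g_1.a)$ (which holds because the extension induces~$\gamma$, cf.~(\ref{selection})), the product equals $\iota(f(g_1)+g_1.f(g_2)+\omega_1(g_1,g_2))\,\sigma_1(g_1g_2)$. On the other hand, by the definition of $\omega_2$ combined with $\sigma_2(g_1g_2)=\iota(f(g_1g_2))\sigma_1(g_1g_2)$, the same product equals $\iota(\omega_2(g_1,g_2)+f(g_1g_2))\,\sigma_1(g_1g_2)$. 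Cancelling $\sigma_1(g_1g_2)$ on the right and applying injectivity of $\iota$ gives $\omega_2-\omega_1=\omega_f$, as required.

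There is essentially no real obstacle here: the argument is routine bookkeeping transcribing the classical group-cohomology computation. The only new ingredient specific to this setting is the verification that $f$ inherits equivariance automatically from the equivariance of $\sigma_1$ and $\sigma_2$, and this is a one-line consequence of the defining properties of $\wh{\alpha}$, $\iota$, and the sections.
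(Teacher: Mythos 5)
Your proposal is correct and follows exactly the paper's argument: the paper defines the same map $f(g):=\iota^{-1}(\sigma_2(g)\sigma_1(g)^{-1})$, notes it is continuous and equivariant, and leaves the identity $\omega_2=\omega_1+\omega_f$ to a standard calculation, which is precisely the bookkeeping you spell out. No gaps.
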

\begin{proof}
The function $f\colon G\to A$,
$f(g):=\iota^{-1}(\sigma_2(g)\sigma_1(g)^{-1})$ is continuous and equivariant.
Standard arguments show that $\omega_2=\omega_1+\omega_f$.
\end{proof}
\begin{prop}\label{soinj}
The assignment $[\omega]\mto[A\times_\omega G]$
is a bijection from $H^2_{\eq}(G,A)$ onto the set of all equivalence
classes of extensions
\[
\{e\}\to A\to\wh{G}\to G\to\{e\}
\]
of contraction groups which induce~$\gamma$.
\end{prop}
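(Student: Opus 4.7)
The plan is to establish well-definedness, surjectivity, and injectivity of the map $[\omega] \mapsto [A \times_\omega G]$, leveraging the preparations of Example~\ref{moti} and Lemmas~\ref{sowelldef} and~\ref{secstocob}, with Theorem~C supplying the one genuinely non-formal input. Well-definedness is immediate from Lemma~\ref{sowelldef}, and each extension $A \times_\omega G$ induces $\gamma$ by the proposition preceding Definition~\ref{defcobou}. For surjectivity, given an extension $E \colon \{e\} \to A \stackrel{\iota}{\to} \widehat{G} \stackrel{q}{\to} G \to \{e\}$ of contraction groups inducing $\gamma$, apply Theorem~C to obtain a continuous equivariant section $\sigma \colon G \to \widehat{G}$ with $\sigma(e) = e$. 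Define $\omega$ via~(\ref{assococ}); it lies in $Z^2_{\eq}(G,A)$ since the cocycle identity is standard and the equivariance~(\ref{cocyeq}) is inherited from $\alpha \circ \sigma = \sigma \circ \alpha$ and $\alpha_A \circ \iota = \iota \circ \alpha_A$ (note $\widehat{\alpha} \circ \iota = \iota \circ \alpha_A$). The computation already carried out in Example~\ref{moti} then shows that
\[
\pi \colon A \times_\omega G \to \widehat{G}, \quad (a,g) \mapsto \iota(a)\sigma(g),
\]
is a bijective homomorphism of topological groups which intertwines $\alpha_A \times \alpha$ with $\widehat{\alpha}$, hence is an isomorphism of contraction groups; the identities $\pi \circ \iota_{A \times_\omega G} = \iota$ (using $\sigma(e) = e$) and $q \circ \pi = \pr_2$ (using $q \circ \iota = e$ and $q \circ \sigma = \id_G$) are read off directly, so $\pi$ is an equivalence of extensions and $[A \times_\omega G] = [E]$.

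For injectivity, suppose $\psi \colon A \times_{\omega_1} G \to A \times_{\omega_2} G$ is an equivalence of extensions, so $\psi \circ \iota_1 = \iota_2$ and $\pr_2 \circ \psi = \pr_2$, and $\psi$ intertwines the contractive automorphism $\alpha_A \times \alpha$ on each side. The canonical sections $\sigma_j(g) := (0,g)$ of $\pr_2 \colon A \times_{\omega_j} G \to G$ are continuous and equivariant, and they produce the cocycles $\omega_j$ via the formula of Lemma~\ref{secstocob}. Set $\sigma_1' := \psi \circ \sigma_1$; this is another continuous equivariant section of $A \times_{\omega_2} G \to G$, and since $\psi$ is a group homomorphism,
\[
\iota_2^{-1}\bigl(\sigma_1'(g_1)\sigma_1'(g_2)\sigma_1'(g_1g_2)^{-1}\bigr)
= \iota_2^{-1}\bigl(\psi(\iota_1(\omega_1(g_1,g_2)))\bigr)
= \omega_1(g_1,g_2),
\]
where the last equality uses $\psi \circ \iota_1 = \iota_2$. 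Applying Lemma~\ref{secstocob} to the two sections $\sigma_2$ and $\sigma_1'$ of the common extension $A \times_{\omega_2} G$ yields $\omega_2 - \omega_1 \in B^2_{\eq}(G,A)$, i.e.\ $[\omega_1] = [\omega_2]$.

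The substantive content of the proposition lies entirely in Theorem~C, which guarantees the equivariant continuous section used in the surjectivity step; without it there is no way to start with an arbitrary extension and produce a cocycle. Everything else is formal transport of the classical group-extension/cohomology bijection through the equivariance condition~(\ref{cov2}), and I expect no obstacle beyond bookkeeping to ensure that each intertwining with $\alpha_A$ and $\alpha$ is preserved at each step (section, coboundary, equivalence).
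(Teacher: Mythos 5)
Your proposal is correct and follows essentially the same route as the paper: well-definedness via Lemma~\ref{sowelldef}, surjectivity via Theorem~C and the computation of Example~\ref{moti}, and injectivity by transporting the canonical section through the equivalence and invoking Lemma~\ref{secstocob}. The only (immaterial) difference is that you push $\sigma_1$ forward by $\psi$ and compare sections inside $A\times_{\omega_2}G$, whereas the paper pulls back by $\psi^{-1}$ and works inside $A\times_{\omega_1}G$.
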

\begin{proof}
The assignment is well defined by Lemma~\ref{sowelldef}
and surjective (see Example~\ref{moti}). To prove injectivity,
let $\omega_1,\omega_2\in Z^2_{\eq}(G,A)$ and assume that there exists
an isomorphism of contraction groups
\[
\psi\colon A\times_{\omega_1} G\to A\times_{\omega_2}G
\]
such that $\pr_2\circ\, \psi=\pr_2$ and $\psi\circ \iota=\iota$
(where $\iota\colon A\to A\times G$, $a\mto (a,e)$).
Then both $\sigma_1\colon G\to A\times G$, $g\mto (0,g)$
and $\sigma_2:=\psi^{-1}\circ \sigma_1$ are equivariant sections for~$\pr_2$,
and
\begin{equation}\label{clarify}
\omega_j(g_1,g_2)=\iota^{-1}(\sigma_j(g_1)\sigma_j(g_2)\sigma_j(g_1g_2)^{-1})
\end{equation}
for $j\in\{1,2\}$ and all $g_1,g_2\in G$,
when the products in (\ref{clarify}) are calculated in $A\times_{\omega_1} G$.
Hence $\omega_2-\omega_1\in B^2_{\eq}(G,A)$, by Lemma~\ref{secstocob}.
\end{proof}
{\small{\bf Helge  Gl\"{o}ckner}, Institut f\"{u}r Mathematik, Universit\"at Paderborn,\\
Warburger Str.\ 100, 33098 Paderborn, Germany,
email: {\tt  glockner\at{}math.upb.de};\\[1mm]
\emph{also conjoint professor at} Department of Mathematics,
University of Newcastle, Callaghan, NSW 2308, Australia.\\[3mm]
{\bf George A. Willis}, Department of Mathematics, University of Newcastle,\\
Callaghan, NSW 2308, Australia, email: {\tt George.Willis\at{}newcastle.edu.au}}\vfill
\end{document}